\pgfplotsset{compat=1.18}
\renewcommand*{\backref}[1]{\ifx#1\relax \else Page #1 \fi}
\renewcommand*{\backrefalt}[4]{%
  \ifcase #1 \footnotesize{(Not cited.)}%
  \or        \footnotesize{(Cited on page~#2.)}%
  \else      \footnotesize{(Cited on pages~#2.)}%
  \fi
}
\numberwithin{equation}{section}
\renewcommand{\d}{\mathrm{d}}
\newtheorem{theorem}{Theorem}[section]
\newtheorem{lemma}{Lemma}[section]
\newtheorem{corollary}{Corollary}[section]
\newtheorem{definition}{Definition}[section]
\newtheorem{proposition}{Proposition}[section]
\newtheorem{remark}{Remark}[section]
\theoremstyle{definition}
\title[]{On the Structure of Stationary Solutions to McKean-Vlasov Equations with Applications to Noisy Transformers} 
\author{Krishnakumar Balasubramanian$^1$}
\address{$^1$Department of Statistics, University of California, Davis, Email: \texttt{kbala@ucdavis.edu} }
\author{Sayan Banerjee$^2$}
\address{$^2$Department of Statistics and Operations Research, University of North Carolina, Chapel Hill. Email: \texttt{sayan@email.unc.edu} }
\author{Philippe Rigollet$^3$}
\address{$^3$Department of Mathematics, Massachusetts Institute of Technology. Email: \texttt{rigollet@mit.edu}}
\begin{document}
\begin{abstract}
We study stationary solutions of McKean-Vlasov equations on the circle. Our main contributions stem from observing an exact equivalence between solutions of the stationary McKean-Vlasov equation and an infinite-dimensional quadratic system of equations over Fourier coefficients, which allows explicit characterization of the stationary states in a sequence space rather than a function space. This framework provides a transparent description of local bifurcations, characterizing their periodicity, and resonance structures, while accommodating singular potentials. We derive analytic expressions that characterize the emergence, form and shape (supercritical, critical, subcritical or transcritical) of bifurcations involving possibly multiple Fourier modes and connect them with discontinuous phase transitions. We also characterize, under suitable assumptions, the detailed structure of the stationary bifurcating solutions that are accurate upto an arbitrary number of Fourier modes. At the global level, we establish regularity and concavity properties of the free energy landscape, proving existence, compactness, and coexistence of globally minimizing stationary measures, further identifying discontinuous phase transitions with points of non-differentiability of the minimum free energy map. As an application, we specialize the theory to the \emph{Noisy Mean-Field Transformer} model, where we show how changing the inverse temperature parameter $\beta$ affects the geometry of the infinitely many bifurcations from the uniform measure. We also explain how increasing $\beta$ can lead to a rich class of approximate multi-mode stationary solutions which can be seen as `metastable states'. Further, a sharp transition from continuous to discontinuous (first-order) phase behavior is observed as $\beta$ increases.
\noindent\newline

\noindent \textbf{AMS 2020 subject classifications:} 35Q83, 35Q70, 34K18, 60H50, 82C22, 35B27.\newline

\noindent \textbf{Keywords:} McKean-Vlasov equations, stationary solutions, free energy, Noisy Transformers, bifurcation theory, phase transition, metastability, periodicity, Lyapunov-Schmidt reduction.
\end{abstract}

\maketitle
\tableofcontents

\section{Introduction}

Consider the following McKean-Vlasov equation on the circle $\mathbb{S}^1 = [0, 2\pi)$:
\begin{align}\label{eq:mveq}
    \partial_t p  = \partial_{\theta\theta} p - \kappa \partial_\theta (p\, \partial_\theta(W\star p)), 
    \qquad p \in C^{1,2}([0,\infty) \times \mathbb{S}^1),
\end{align}
where $W : [0, 2\pi) \to \mathbb{R}$ is the \emph{interaction potential},
\[
(W \star p)(\theta) := \frac{1}{2\pi}\int_0^{2\pi} W(\theta - \phi)\, p(\phi)\, \d\phi,
\]
and we impose the normalization and positivity conditions 
\[
p(\cdot,\cdot) \ge 0, 
\qquad 
\frac{1}{2\pi}\int_0^{2\pi} p(\cdot,\theta)\, \d\theta = 1,
\]
so that $p(t,\cdot)$ defines a probability density on $\mathbb{S}^1$ for each $t \ge 0$, and $\kappa > 0$ represents the \emph{interaction intensity}. Throughout this article, $W$ is assumed to be even, square integrable, and have the Fourier expansion $W(\theta) = \sum_{\ell=1}^\infty a_\ell \cos \ell\theta$.
\cref{eq:mveq} is a canonical nonlinear diffusion equation that arises as the mean-field limit, as $N \to \infty$, of systems of weakly interacting particles on $\mathbb{S}^1$ whose angular dynamics evolve according to
\begin{equation}\label{eq:partsyst}
    d\theta_i(t) = \frac{\kappa}{N}\sum_{j=1}^N \partial_\theta W(\theta_i(t) - \theta_j(t))\,d t + \sqrt{2}\,d B_i(t), \quad 1 \le i \le N,
\end{equation}
where $B_1,\dots,B_N$ are independent standard Brownian motions. The term $\partial_\theta W(\theta_i - \theta_j)$ represents the interaction force exerted by particle $j$ on particle $i$, governing the strength and direction of coupling: attractive potentials promote synchronization, while repulsive ones lead to dispersion or multimodal states. In the mean-field limit, these pairwise forces collectively produce the nonlinear drift term $- \kappa \partial_\theta (p\, \partial_\theta(W\star p))$ in~\cref{eq:mveq}, which combines with the diffusive term $\partial_{\theta\theta} p$ to describe the evolution of the population density on $\mathbb{S}^1$. The McKean-Vlasov equation arises in a wide range of mean-field models, including synchronization dynamics \citep{kuramoto1981rhythms,bertini2010dynamical}, granular media \citep{bolley2013uniform}, collective behavior in biological systems \citep{keller1971model}, and opinion dynamics \citep{hegselmann2015opinion}, to name a few.

\medskip

Our primary interest lies in explicitly characterizing the \emph{stationary} weak solutions to~\eqref{eq:mveq}, which satisfy
\begin{align}\label{eq:stat}
    \partial_{\theta\theta} p = \kappa \partial_\theta (p\, \partial_\theta(W\star p)).
\end{align}
Such stationary solutions approximate equilibria of the interacting particle system \eqref{eq:partsyst} and coincide with the critical points of the associated free energy functional $\rho \mapsto \mathcal{F}(\rho, \kappa)$, where
\begin{align*}
    \mathcal{F}(\rho, \kappa) := \frac{1}{2\pi}\int_0^{2\pi} \rho(\theta) \log \rho(\theta) \d \theta - \frac{\kappa}{8\pi^2}\int_0^{2\pi}\int_0^{2\pi}W(\theta - \phi)\rho(\theta)\rho(\phi)\d\theta\d\phi.
\end{align*}
The uniform distribution on $\mathbb{S}^1$ is a trivial stationary solution as both the left and right hand sides of \eqref{eq:stat} vanish with this choice of $p$. Our primary focus is on the existence, shape and bifurcation structure of other, nontrivial, stationary solutions. A precise, explicit understanding of these equilibrium densities is crucial for several reasons. First, the structure and multiplicity of stationary solutions determine the possible long-time behaviors and phase transitions of the underlying mean-field dynamics. Second, the stability of these equilibria governs whether the system converges to the uniform distribution, organizes into coherent patterns, or exhibits abrupt discontinuous transitions as the interaction strength~$\kappa$ varies. Finally, obtaining detailed representations of $p$ in terms of $W$ and $\kappa$ enables connecting qualitative features of the equilibrium density, such as periodicity and multi-modality, to the spectral properties of~$W$.  

Analysis of stationary solutions to the McKean-Vlasov equation has traditionally been tackled in function spaces, often requiring advanced tools from calculus of variations; see, for example, \cite{carrillo2020long}. A key feature that distinguishes the approach taken in this paper from previous ones is the observation that the problem of finding stationary solutions can be transformed exactly into finding solutions to an infinite-dimensional system of quadratic equations in a Fourier sequence space. This shift from a function space to a sequence space enables us to do a more transparent and elaborate analysis. 

\subsection{Background and challenges}\label{sec:bgandchallenges}
Long-time behavior for McKean-Vlasov equations is a subject of extensive current research. Most of the existing results in this direction assume some form of \emph{convexity}. This is either incorporated via the uniform convexity of $W$---for McKean-Vlasov equations on $\mathbb{R}^d$---or by introducing a uniformly convex \emph{confinement potential} $V$ as an additional term $-\partial_\theta(p\partial_\theta V)$ in the RHS of \eqref{eq:mveq} which overcomes the lack of convexity of $W$; see, for example, \cite{carrillo2003kinetic,malrieu2003convergence,cattiaux2008probabilistic,guillin2021uniform}. In this case, the free energy functional inherits convexity properties and, in particular, admits a unique global minimizer, sometimes up to translations. This greatly simplifies the energy landscape and, in addition, one also obtains rates of convergence to stationarity by viewing \eqref{eq:mveq} as a gradient flow for the free energy on the $L^2$-Wasserstein space and applying log-Sobolev inequalities. In our setting, there is \emph{no confinement potential} and the inherent \emph{periodicity of $W$}, due to the underlying manifold $\mathbb{S}^1$, \emph{forbids it from being convex}. This results in a very rich free energy landscape and makes the analysis challenging.

For the non-convex case, early works on stationary solutions and phase transitions for McKean–Vlasov (and related Vlasov–Fokker–Planck) equations focus on existence questions and qualitative behavior for specialized interaction functions. \citet{dawson1983critical} establishes critical dynamics and fluctuation results for mean-field cooperative models with double-well confinement and Curie–Weiss-type interaction potentials. \citet{yozo1984asymptotic} analyzes bifurcations, showing non-uniqueness of stationary solutions for the McKean–Vlasov equation on $\mathbb{R}^d$. For the double-well confinement potential case, \citet{tugaut2014phase} provides a quantitative description of bifurcations and the multiplicity of invariant measures, clarifying how the double-well structure drives phase transitions.

Work on phase transitions and bifurcation structures for spatially periodic or finite-volume problems advances rapidly in recent years. \citet{chayes2010mckean} analyze the free energy functional for general interaction potentials on $\mathbb{T}^d$ and characterize the continuity of phase transitions for its global minimizers as $\kappa$ varies. In particular, they show that when all Fourier coefficients of $W$ are negative, the free energy becomes convex in a suitable sense and the uniform distribution on $\mathbb{T}^d$ is the unique minimizer of $\mathcal{F}(\kappa, \cdot)$ for all $\kappa$. This implies that non-trivial stationary distributions arise only when $W$ admits at least one positive Fourier mode. Closest in theme to our work is \citet{carrillo2020long}, who provide criteria for local bifurcations from the uniform distribution based on positive Fourier modes of $W$ and give sufficient conditions for both continuous and discontinuous phase transitions on the torus. Their analysis requires intricate calculus on function spaces: the bifurcation study involves fixed points of a map on $L^2(\mathbb{T}^d)$, where the associated Fréchet derivatives make the approach difficult to extend beyond single-mode bifurcations (see \cref{rem:CRrem}). This framework is recently extended to the sphere $\mathbb{S}^d$ by \citet{shalova2024solutions}.


\subsection{Our contributions} 

\subsubsection{General results on stationary McKean-Vlasov equations on $\mathbb{S}^1$}
In this work, we develop a Fourier-analytic framework for analyzing~\eqref{eq:stat}, which transforms the nonlinear PDE into an infinite-dimensional system of quadratic equations in a Fourier space; see \cref{lem:lemma1}. This representation provides a transparent description of stationary solutions as the zeroes of the map $F: \ell^2_w \times \mathbb{R}_+ \rightarrow \ell^2$ given by
\begin{align*}
    F_{\ell}(\underline{p}, \kappa) = \ell(2-\kappa a_\ell) p_\ell - \kappa \sum_{j < \ell}  j a_j p_j p_{\ell-j} - \kappa \sum_{j > \ell} (ja_j - (j-\ell)a_{j-\ell})p_jp_{j-\ell}, \ \ell \ge 1,
\end{align*}
where $\ell^2_w := \{\underline{p} \in \mathbb{R}^\infty: \sum_{\ell=1}^\infty (1 + \ell^2)p_\ell^2 < \infty\}$ and $\ell^2 := \{\underline{p} \in \mathbb{R}^\infty: \sum_{\ell=1}^\infty p_\ell^2 < \infty\}$. Technically, this transfers the analysis from function spaces treated in~\cite{carrillo2020long} to more tractable sequence spaces, thereby enabling the investigation of more detailed properties of stationary solutions. As a first step, we adapt the Crandall-Rabinowitz \citep{crandall1971bifurcation} setup used in \cite{carrillo2020long} to our function $F$ in \cref{thm:CRthm} that gives rise to non-trivial bifurcating solutions at intensity values $\kappa = 2/a_{\ell^*}$, where $a_{\ell^*}$ is a positive Fourier mode of $W$ such that $a_j \neq a_{\ell^*}$ for $j \neq \ell^*$. In comparison to \citet[Theorem 4.2]{carrillo2020long}, our result \emph{applies to more singular potentials $W$} and also \emph{clarifies how the bifurcating branches `bend'} forward or backward (supercritical or subcritical bifurcations; see \cref{fig:bifurcation}) based on the sign of the ratio 
\begin{align}\label{eq:lstarsignature}
R_{\ell^*}(W)\coloneqq \frac{a_{\ell^*} - 2a_{2\ell^*}}{a_{\ell^*} - a_{2\ell^*}},
\end{align}
which we refer to as the $\ell^*$-\emph{signature} of $W$. In \cref{thm:period}, we characterize the \emph{periodicity of non-trivial bifurcating solutions} and, in particular, show that if a bifurcation point arises from possibly multiple coinciding Fourier modes of $W$, then any non-trivial bifurcating solution has to be $g$-periodic, where $g$ is the greatest common divisor of the indices of the coinciding Fourier modes. As a consequence, if $a_1> a_2> \dots>0$, the periodicity of bifurcating branches at the subsequent bifurcation points $2/a_1 < 2/a_2 < \dots$ increases.

In the setting of \citet[Theorem 4.2]{carrillo2020long} and our \cref{thm:CRthm}, the uniqueness assertion $a_j \neq a_{\ell^*}$ for $j \neq \ell^*$ leads to single mode bifurcating branches around $\kappa = 2/a_{\ell^*}$ where, locally, only one Fourier mode of the solution dominates.  In \cref{thm:high}, we characterize multi-mode branches when multiple Fourier coefficients of $W$ cluster around the same value. This leads to a rich class of bifurcations. As a special case, under the `resonance assumption' $a_\ell = a_m = a_{\ell + m}$, we obtain \emph{transcritical bifurcations} under suitable assumptions (see \cref{fig:bifurcation}). This analysis helps us construct a broad family of `approximately' stationary (\emph{metastable}) solutions of \eqref{eq:mveq} when multiple Fourier modes of $W$ are close to each other (see \cref{rem:metahigh}). In \cref{thm:main_thm}, under slightly stronger assumptions on $W$ than in \cref{thm:CRthm}, we give a very detailed structural characterization of the stationary solutions that are locally accurate up to an arbitrary level of precision at the exponential scale.

In \cref{sec:dpt}, we study phase transitions which correspond to a shift in the energy minimizing branch from the uniform distribution to a non-uniform one as $\kappa$ increases, see \cref{def:contpt}.
We \emph{connect the shape of bifurcations to the continuity of phase transitions} and show that bifurcations that have a branch in the decreasing $\kappa$ direction (e.g. subcritical and transcritical bifurcations) lead to discontinuous phase transitions of the free energy minimizers; see \cref{thm:bifdpt} and~\cref{fig:kappa-quadratic-overlay}. \cref{cor:dpt} exhibits instances when this happens and, in particular, shows that the sufficient condition for discontinuous phase transition in \citet[Theorem 5.11]{carrillo2020long} is just one of many instances of creating such a bifurcation that `turns back'. This analysis also paints a geometric picture for such transitions visualized in~\cref{fig:kappa-quadratic-overlay}: the \emph{observed discontinuity} in the dynamics arising in the increasing $\kappa$ direction \emph{emerges, in fact, in a continuous fashion} through a bifurcation in the decreasing $\kappa$ direction.

In \cref{sec:global}, we present some global properties of the energy landscape. As \emph{explicitly computable special cases} for the zeros of the map $F$ above, we present a global bifurcation picture involving Poisson kernels for the singular potential $W(\theta) = -\log(2\sin(\theta/2))$ (\cref{cor:explicitbif}) and a global `Kuramoto type' branch when $W$ has a finite number of non-zero Fourier modes, all positive (\cref{thm:finkur}). Finally, in \cref{thm:global}, we perform a detailed analysis of the \emph{free energy minimizing branch} of solutions and, among other things, show that \emph{discontinuous phase transitions are connected to points of non-differentiability of the minimum free energy map} $\kappa \mapsto m(\kappa) := \min_\rho \mathcal{F}(\kappa, \rho)$.

We remark here that, to cleanly demonstrate our main results, our main focus in this work is for the case of McKean-Vlasov equations on the circle. Nevertheless, our approach can be generalized to higher-dimensional compact manifolds like the $d$-dimensional torus $\mathbb{T}^d$ and sphere $\mathbb{S}^d$, which admit `Fourier expansions' in terms of the associated orthonormal basis of $L^2$ functions.

\subsubsection{Applications to noisy mean-field transformers} Having established the general Fourier-analytic theory, we next demonstrate its utility by applying it to the \emph{noisy mean-field transformer model}, an important modern instance of McKean–Vlasov dynamics. Recent works have focused on describing  signal propagation in attention-based transformer architectures in the zero-noise setup \citep{geshkovski2024dynamic, geshkovski2025mathematical,bruno2025emergence,chen2025quantitative,burger2025analysis,castin2025unified,bruno2025multiscale}. 
Our focus in this work is on the noisy setup, also considered recently in~\cite{shalova2024solutions}. 
Studying the noisy regime provides a principled first step to understand how randomness in the intermediate multi-layer perceptron layers of transformer models affects stability, phase transitions, and emergent collective behavior in the overall large-scale transformer dynamics. In particular, it provides insight into how feature representations stabilize across layers in large-depth transformer models. 

The Noisy mean-field Transformer potential is given by
\begin{align}\label{eq:NMFTpotential}
W_\beta(\theta) \coloneqq \frac{e^{\beta \cos\theta} - 1}{\beta} = \sum_{\ell=1}^\infty a_\ell(\beta)\cos(\ell\theta), \qquad \beta>0,
\end{align}
where we have the explicit Fourier coefficients
\begin{align*}
a_\ell(\beta) = \frac{2I_\ell(\beta)}{\beta}, \quad \ell \ge 1,\quad\text{with}\quad I_\ell(\beta)\coloneqq \frac{1}{2\pi}\int_0^{2\pi} e^{\beta \cos\phi} \cos(\ell\phi) \, \mathrm{d}\phi
\end{align*}
denoting the modified Bessel function of the first kind.

By applying our general Fourier-analytic framework to this model, we uncover a rich phase structure dependent on the noise parameter $\beta$. In~\cref{sec:first_order} we show that the uniform solution in the noisy mean-field Transformer model loses stability when $\kappa > \beta / I_1(\beta)$, marking the first bifurcation point. More generally, there are infinitely many bifurcations at $\kappa_\ell^*(\beta) = \beta / I_\ell(\beta)$, each corresponding to the $\ell$-th Fourier mode, whose type (supercritical or subcritical) is governed by the $\ell$-signature of $W_\beta(\theta)$, i.e., the sign of 
$R_\ell(W_\beta(\theta)) \coloneqq R_\ell(\beta) = (I_\ell(\beta) - 2I_{2\ell}(\beta)) / (I_\ell(\beta) - I_{2\ell}(\beta))$. For small $\beta$, all bifurcations are supercritical (bend right). As $\beta$ increases, the first few bifurcations become subcritical, and in the large-$\beta$ limit, every bifurcation approaches criticality from the subcritical side.

Another consequence of our general results, as shown in~\cref{rem:mettran}, is that in the large-$\beta$ regime, the Fourier coefficients of the Noisy Transformer potential, 
$a_\ell(\beta) = 2I_\ell(\beta)/\beta$, become increasingly clustered: for fixed $L$, 
the relative difference between $a_{l_1}(\beta)$ and $a_{l_2}(\beta)$ scales as 
$(\ell_2^2 - \ell_1^2)/(2\beta)$. Consequently, for $\beta \gtrsim L^2/(2\varepsilon)$, 
the first $L$ modes are nearly degenerate, and the bifurcation thresholds 
$\kappa_\ell^*(\beta) = \beta/I_\ell(\beta)$ accumulate exponentially fast as $\beta \to \infty$. 
This near-degeneracy activates higher-order bifurcation interactions, giving rise to 
a family of metastable stationary states analogous to those observed in the noiseless Transformer~\citep{geshkovski2024dynamic, bruno2025emergence}. In \cref{rem:transformer_explicit}, we use the refined approximation described in \cref{thm:main_thm} to dive deeper into the structure of the bifurcating solutions. We exhibit how the primary mode of the stationary density interacts with the other modes and offer an interactive way to visualize the different possible shapes that emerge as $\beta, \kappa$ vary (see \cref{fig:denvis}). 

Finally, in~\cref{thm:phasetr}, we show that the phase transition behavior, in the sense of \cref{def:contpt}, of the Noisy Transformer is governed by the quantity 
$R(\beta) = I_2(\beta)/I_1(\beta)$, which increases monotonically with $\beta$. There exist thresholds $0 < \beta_- < \beta_+$ such that for $\beta \le \beta_-$, 
the transition at $\kappa^*(\beta) = \beta/I_1(\beta)$ is continuous (supercritical),  while for $\beta > \beta_+$, it becomes discontinuous (subcritical) with a new critical point 
$\kappa_c(\beta) < \kappa^*(\beta)$.  The crossover occurs near $\beta_+ \approx R^{-1}(1/2) \simeq 2.447$, marking the shift 
from supercritical to subcritical bifurcation behavior.

\medskip
\textbf{Utility of adding `small noise' to Transformer dynamics:} Self-attention based Transformer dynamics are noiseless. The above discussion indicates the possible utility of adding noise, potentially via the multilayer perceptrons interspersed with attention in actual Transformer models, and tuning its strength by varying $\kappa$ (here, large $\kappa$ is equivalent to `small noise'). As noted in \cite{geshkovski2025mathematical}, all points eventually collapse on running the noiseless Transformer dynamics long enough. Without noise, the undulations in $W$ arising from different Fourier modes translates into a highly uneven landscape for the free energy functional, now without the entropy term, which gives rise to \emph{metastable states at several scales}. As a result, although there is eventual collapse, there could be intermediate clustering in intricate ways, depending on the initial density profile of the points, and a full understanding of this phenomenon is quite involved; see \cite{geshkovski2024dynamic,karagodin2024clustering,bruno2025emergence} for some results in this direction. Adding and tuning the noise helps \emph{regularize the energy profile at `small enough' scales while retaining its features at `large enough' scales}. This gives control on the \emph{expressivity} of the Transformer. Moreover, the noise \emph{prevents eventual collapse} and stationary states are `truly stationary'.

\subsection{Additional related works}

Among other related works, recently~\cite{bertoli2024stability} study related notions of stability when $W$ has a finite number of non-zero Fourier modes and present heuristic, numerical analyses for the existence of multimodal stationary states.
~\cite{cormier2022stability} provides conditions guaranteeing local exponential stability of invariant measures using Lions-derivative techniques and analytic-function/Fourier-based criteria in different settings, giving explicit, checkable tests that are immediately useful in applications such as neuronal mean-field models. \cite{vukadinovic2023phase} analyzes bifurcation thresholds for a two-component weakly-coupled Hodgkin–Huxley neurons and produces a finite-dimensional reduction that makes the bifurcation explicit in that application. 
~\cite{zhang2025local} gives a rigorous local bifurcation criterion for a large class of gradient-type McKean-Vlasov problems using regularized determinants and operator-theoretic tools; this provides a direct route to relate linearization spectra on function spaces to phase transition thresholds. 

Furthermore,~\cite{degond2015phase} study phase transitions and long-time behavior in kinetic models of self-propelled particles and alignment-type interactions; their analyses emphasize kinetic, as opposed to purely gradient, structure and connect phase transitions to changes in macroscopic order parameters and, in certain settings, hysteresis. These kinetic perspectives are relevant when one moves beyond overdamped gradient flows to retain inertial or alignment dynamics.

While finalizing this work, we became aware of the concurrent study by \cite{gerber2025formationclusterscoarseningweakly}, which investigates the dynamical behavior and coarsening phenomena in weakly interacting diffusions in a rich family of models that includes noisy Transformers. Their results are complementary to ours: whereas we investigate the structure and bifurcation of stationary solutions at the mean-field PDE level, their focus is primarily on the evolution and metastability of clustered states at the particle dynamics level.

\subsection{Solution spaces}\label{sec:solutinspace}
In this work, it will be convenient to consider weak solutions for~\eqref{eq:stat} in more general spaces. We denote by $L_s^2(\mathbb{S}^1)$ the space of square integrable functions $f$ on $\mathbb{S}^1$ such that $f(\theta - \pi) = f(\pi-\theta), \forall \theta \in [0, 2\pi)$ (even function). We consider solutions in the Sobolev space
\[
H_s^1(\mathbb{S}^1) := \mathcal{W}_s^{1,2}(\mathbb{S}^1)
    = \Bigl\{\, p : \mathbb{S}^1 \to \mathbb{R} \,\Big|\, 
        p,\, p' \in L_s^2(\mathbb{S}^1)
    \Bigr\}.
\]
consisting of square-integrable, even functions on the circle whose weak derivative is also square-integrable, equipped with the usual Sobolev norm $\|p\|_{H^1}^2 := \smallint_0^{2\pi} \big( p^2(\theta) + |p'(\theta)|^2 \big)\, d\theta$. 
The symmetry assumption is both natural and nonrestrictive: it corresponds to considering equilibria invariant under reflection, a property inherited from our assumption that $W$ is even.

\section{Local properties}\label{sec:local} 
\subsection{A Fourier representation}
The starting point of our analysis is the following equivalent representation of a solution $p$ to \eqref{eq:stat} purely in terms of Fourier modes of $p$ and $W$. This enables us to perform a more tractable and detailed analysis of stationary solutions on \emph{sequence spaces} in comparison to analysis on \emph{function spaces} in \cite{carrillo2020long}. The generality of the following theorem also helps us analyze stationary solutions for more singular potentials $W$.

\begin{theorem}\label{lem:lemma1}
Fix $\kappa>0$. Let $p$ be any solution to~\eqref{eq:stat} in $H_s^1(\mathbb{S}^1)$. Write 
\begin{align*}
    p(\theta) = 2\sum_{\ell=0}^\infty p_\ell\cos \ell\theta, \quad \theta \in [0,2\pi).
\end{align*}
Then, $\{ p_\ell: \ell \geq 0\}$ satisfy $p_0=1/2$ and for $\ell\geq 1$, 
\begin{align}\label{eq:Fid}
\ell(2-\kappa a_\ell) p_\ell = \kappa \sum_{j < \ell}  j a_j p_j p_{\ell-j} + \kappa \sum_{j > \ell} (ja_j - (j-\ell)a_{j-\ell})p_{j}p_{j-\ell}.
\end{align}
Conversely, any $p \in H_s^1(\mathbb{S}^1)$ whose Fourier coefficients satisfy the above is a solution to~\eqref{eq:stat}. Moreover, any such solution satisfies the fixed point equation
$$
p(\theta) = \frac{1}{Z}\exp\left(\kappa \sum_{\ell=1}^\infty p_\ell a_\ell \cos \ell\theta\right), \ \theta \in \mathbb{S}^1,
$$
where $Z = \int_0^{2\pi}\exp\left(\kappa \sum_{\ell=1}^\infty p_\ell a_\ell \cos \ell\psi\right) \d\psi$.
\end{theorem}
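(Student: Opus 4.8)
The plan is to descend from the second–order stationary PDE to a first–order identity, identify the Fourier expansion of the convolution term, and then read off \eqref{eq:Fid} by matching Fourier coefficients mode by mode; the fixed–point equation will drop out along the way. Throughout I use the normalization $\tfrac{1}{2\pi}\int_0^{2\pi}p=1$, which is precisely $p_0=1/2$ in the convention $p(\theta)=2\sum_{\ell\ge0}p_\ell\cos\ell\theta$ (and is part of the standing notion of a stationary solution). First I would record the regularity that legitimizes every Fourier manipulation: since $p\in H_s^1$ we have $\sum_\ell(1+\ell^2)p_\ell^2<\infty$, hence by Cauchy--Schwarz against $\sum_\ell(1+\ell^2)^{-1}<\infty$ also $\sum_\ell|p_\ell|<\infty$, so $p$ is continuous; since $W\in L^2$ means $\sum_\ell a_\ell^2<\infty$, orthogonality of $\{\cos\ell\theta\}_{\ell\ge0}$ and evenness of $p$ give the clean identity $(W\star p)(\theta)=\sum_{\ell\ge1}a_\ell p_\ell\cos\ell\theta$, whence $\partial_\theta(W\star p)=-\sum_{\ell\ge1}\ell a_\ell p_\ell\sin\ell\theta$ with $\sum_\ell\ell|a_\ell||p_\ell|\le(\sum_\ell a_\ell^2)^{1/2}(\sum_\ell\ell^2p_\ell^2)^{1/2}<\infty$, so $\partial_\theta(W\star p)$ is continuous too. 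In particular $p\,\partial_\theta(W\star p)\in L^2$, so \eqref{eq:stat} is meaningful read distributionally and all series below converge absolutely, so termwise products and rearrangements are valid.

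Second, I would integrate \eqref{eq:stat} once. It states $\partial_\theta\bigl(\partial_\theta p-\kappa p\,\partial_\theta(W\star p)\bigr)=0$, so the flux $\partial_\theta p-\kappa p\,\partial_\theta(W\star p)$ equals a constant $C$ a.e. Integrating over $\mathbb{S}^1$: $\int_0^{2\pi}\partial_\theta p=0$ by periodicity, while $\int_0^{2\pi}p\,\partial_\theta(W\star p)=0$ because $p$ carries only cosine modes and $\partial_\theta(W\star p)$ only sine modes, so the product has vanishing mean (equivalently, $p$ is even and $\partial_\theta(W\star p)$ odd). Hence $C=0$ and $\partial_\theta p=\kappa\,p\,\partial_\theta(W\star p)$. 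Viewing this as the linear first–order ODE $\partial_\theta p=\kappa\bigl(\partial_\theta(W\star p)\bigr)p$ with continuous coefficient, $p$ has constant sign; combined with $\tfrac{1}{2\pi}\int p=1>0$ this forces $p>0$ everywhere. Dividing by $p$ gives $\partial_\theta\log p=\kappa\,\partial_\theta(W\star p)$, so $\log p=\kappa(W\star p)+c$; substituting the expansion of $W\star p$ and fixing $c$ by normalization yields $p=\tfrac1Z\exp\bigl(\kappa\sum_{\ell\ge1}a_\ell p_\ell\cos\ell\theta\bigr)$.

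Third, I would extract \eqref{eq:Fid} by Fourier–expanding $\partial_\theta p=\kappa p\,\partial_\theta(W\star p)$. The left side is $-2\sum_{n\ge1}n p_n\sin n\theta$. For the right side, substitute $W\star p=\sum_{\ell\ge1}a_\ell p_\ell\cos\ell\theta$ and use $2\cos m\theta\sin\ell\theta=\sin(\ell+m)\theta+\sin(\ell-m)\theta$ to obtain
\[
p\,\partial_\theta(W\star p)=-\sum_{m\ge0}\sum_{\ell\ge1}\ell a_\ell\,p_m p_\ell\bigl[\sin(\ell+m)\theta+\sin(\ell-m)\theta\bigr].
\]
For fixed $n\ge1$, the coefficient of $\sin n\theta$ receives contributions from the index sets $\{\ell+m=n\}$, $\{\ell-m=n\}$ and $\{m-\ell=n\}$, the last with a sign flip since $\sin(\ell-m)\theta=-\sin n\theta$ there. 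Reindexing each of the three sums onto a single running index $j$ and separating off the diagonal terms with $m=0$ (where $p_m=p_0=\tfrac12$), the first set yields $\kappa\sum_{j<n}j a_j p_j p_{n-j}$, the second and third combine into $\kappa\sum_{j>n}\bigl(j a_j-(j-n)a_{j-n}\bigr)p_j p_{j-n}$, and the two diagonal pieces (one from each of the first two sets) contribute $\kappa n a_n p_n$, which moves to the left to form $n(2-\kappa a_n)p_n$; matching coefficients gives \eqref{eq:Fid} with $\ell=n$. For the converse, if $p\in H_s^1$ has $p_0=\tfrac12$ and its coefficients satisfy \eqref{eq:Fid}, then $\partial_\theta p-\kappa p\,\partial_\theta(W\star p)$ is an $L^2$ function carrying only sine modes all of whose sine coefficients vanish by \eqref{eq:Fid}, hence it is $\equiv0$; differentiating distributionally recovers \eqref{eq:stat}, and the fixed–point equation follows as in the previous paragraph.

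The main obstacle is the bookkeeping in the third step: enumerating the three index families in the product–to–sum expansion correctly, reindexing them onto a common summation variable, and recognizing that the diagonal term $\ell=n$ (with $p_0=\tfrac12$) is exactly what upgrades the bare coefficient into the factor $n(2-\kappa a_n)$ on the left. The remaining ingredients — the single integration, the vanishing of the flux constant, the positivity of $p$, and the convolution identity $W\star p=\sum_\ell a_\ell p_\ell\cos\ell\theta$ — are routine once the absolute convergence from the first step is in hand.
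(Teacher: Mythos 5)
Your proposal is correct and follows essentially the same route as the paper: both reduce \eqref{eq:stat} to the vanishing of the flux $p'-\kappa\,p\,\partial_\theta(W\star p)$ (you via one integration plus the even--odd parity argument for the constant, the paper via weak testing against sine modes), then obtain \eqref{eq:Fid} from the same product-to-sum bookkeeping, with the diagonal $m=0$ terms producing the factor $\ell(2-\kappa a_\ell)$, and recover the exponential fixed point from the constancy of $p\,e^{-\kappa W\star p}$. Your positivity argument via ODE uniqueness is a minor variant of the paper's observation that $p\,e^{-\kappa(W\star p)}$ is constant, and both are sound.
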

Since $p$ will always satisfy $\frac{1}{2\pi}\int_0^{2\pi}p(\cdot,\theta)\d\theta = 1$, we suppress the zeroth mode and simply call $\underline{p} = \left(p_\ell := \frac{1}{2\pi}\int_0^{2\pi}p(\cdot,\theta)\cos \ell\theta \d\theta\right)_{\ell \in \mathbb{N}}$ the Fourier modes of $p$.
Observe that the Fourier modes of $p \in H_s^1(\mathbb{S}^1)$ correspond exactly with the weighted $\ell^2$ space
$$
\ell^2_w := \{\underline{x} \in \mathbb{R}^\infty: \sum_{\ell=1}^\infty (1 + \ell^2)x_\ell^2 < \infty\}.
$$
Suppose $\sup_{\ell \ge 1} \ell |a_\ell| < \infty$. The above theorem states that $p \in H_s^1(\mathbb{S}^1)$ is a stationary solution for $\kappa \in \mathbb{R}_+$ if and only if the Fourier modes $\underline{p} \in \ell^2_w$ of $p$ satisfy 
\begin{equation}
    \label{eq:F=0}
    F(\underline{p}, \kappa)=\underline{0},
\end{equation}
where $F: \ell^2_w \times \mathbb{R}_+ \rightarrow \ell^2$ is given by
\begin{align*}
    F_{\ell}(\underline{x}, \kappa) = \ell(2-\kappa a_\ell) x_\ell - \kappa \sum_{j < \ell}  j a_j x_j x_{\ell-j} - \kappa \sum_{j > \ell} (ja_j - (j-\ell)a_{j-\ell})x_jx_{j-\ell}, \ \ell \ge 1.
\end{align*}
The well-definedness, regularity and derivatives of $F$ are recorded in the following result.
\begin{lemma}\label{lem:Freg}
     Suppose $\sup_{\ell \ge 1} \ell |a_\ell| < \infty$. Then $F: \ell^2_w \times \mathbb{R}_+ \rightarrow \ell^2$ is a $C^\infty$ function. The Fr\'echet derivatives of $F$ are given by
     \begin{align*}
         \left[D_{\underline{p}}F(\underline{x}, \kappa)\underline{h}\right]_\ell &= \ell(2-\kappa a_\ell) h_\ell - \kappa \sum_{j < \ell}  j a_j (x_j h_{\ell-j} + h_jx_{\ell-j})\\
         &\qquad - \kappa \sum_{j > \ell} (ja_j - (j-\ell)a_{j-\ell})(x_jh_{j-\ell} + h_jx_{j-\ell}),\\
         \left[D^2_{\underline{p}\underline{p}}F(\underline{x}, \kappa)[\underline{h},\underline{k}]\right]_\ell &= - \kappa \sum_{j < \ell}  j a_j (h_j k_{\ell-j} + k_jh_{\ell-j}) - \kappa \sum_{j > \ell} (ja_j - (j-\ell)a_{j-\ell})(h_j k_{j - \ell} + k_jh_{j- \ell}),\\
         \left[D^2_{\underline{p}\kappa}F(\underline{x}, \kappa)\underline{h}\right]_\ell &= -\ell a_\ell h_\ell - \sum_{j < \ell}  j a_j (x_j h_{\ell-j} + h_jx_{\ell-j}) - \sum_{j > \ell} (ja_j - (j-\ell)a_{j-\ell})(x_jh_{j-\ell} + h_jx_{j-\ell}),
     \end{align*}
     where $\ell \in \mathbb{N}$ and $\underline{h},\underline{k} \in \ell^2_w$.
\end{lemma}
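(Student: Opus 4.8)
The plan is to verify the three claimed properties of $F$ --- smoothness, and the explicit formulas for the first two $\underline{p}$-derivatives and the mixed $\underline{p}\kappa$-derivative --- by exhibiting $F$ as a sum of a bounded linear map, a bounded bilinear map, and their rescalings by $\kappa$, all on the scale of weighted $\ell^2$ spaces. The key structural observation is that, after dividing out the leading factor, the nonlinearity in $F_\ell$ is a discrete convolution of the sequence $(ja_j x_j)_j$ (or a close variant of it) against $(x_j)_j$, so the whole analysis reduces to standard facts about convolutions in weighted sequence spaces.

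First I would fix notation: write $F(\underline{x},\kappa) = A(\kappa)\underline{x} - \kappa\, B(\underline{x},\underline{x})$, where $A(\kappa)$ is the diagonal operator $[\,A(\kappa)\underline{x}\,]_\ell = \ell(2-\kappa a_\ell)x_\ell$ and $B$ is the symmetric bilinear form whose $\ell$-th component collects the two convolution sums. The first step is to check $A(\kappa): \ell^2_w \to \ell^2$ is bounded: since $[\,A(\kappa)\underline{x}\,]_\ell = 2\ell x_\ell - \kappa \ell a_\ell x_\ell$, and $\sum_\ell \ell^2 x_\ell^2 \le \|\underline{x}\|_{\ell^2_w}^2$ while $\sup_\ell \ell|a_\ell| =: M < \infty$ gives $\sum_\ell \ell^2 a_\ell^2 x_\ell^2 \le M^2 \|\underline{x}\|_{\ell^2}^2 \le M^2\|\underline{x}\|_{\ell^2_w}^2$, we get $\|A(\kappa)\underline{x}\|_{\ell^2} \le (2 + \kappa M)\|\underline{x}\|_{\ell^2_w}$; moreover $\kappa \mapsto A(\kappa)$ is affine, hence $C^\infty$, into the Banach space of bounded operators. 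The second step is the bilinear estimate for $B$: the two sums defining $B_\ell(\underline{x},\underline{y})$ are, up to the bounded multipliers $ja_j$ and $(ja_j - (j-\ell)a_{j-\ell})$, convolutions $\sum_{m+n=\ell} u_m v_n$ with $u_m = m a_m x_m$ (bounded by $M|x_m|$) and $v_n = y_n$, plus a symmetric term. Since $(u_m) \in \ell^2 \hookrightarrow \ell^1$? --- no, $\ell^2 \not\subset \ell^1$, so instead I use that $\ell^2 * \ell^1 \subset \ell^2$ and that $\ell^2_w \subset \ell^1$ (because $\sum_\ell |x_\ell| \le (\sum_\ell \ell^{-2})^{1/2}(\sum_\ell \ell^2 x_\ell^2)^{1/2} \lesssim \|\underline{x}\|_{\ell^2_w}$). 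Thus with $(u_m) \in \ell^2$ coming from the $\ell^2_w$-bound on $\underline{x}$ and $(v_n) \in \ell^1$ from the $\ell^2_w$-bound on $\underline{y}$, Young's inequality gives $\|B(\underline{x},\underline{y})\|_{\ell^2} \lesssim M\,\|\underline{x}\|_{\ell^2_w}\|\underline{y}\|_{\ell^2_w}$ (the coefficient $(ja_j - (j-\ell)a_{j-\ell})$ is bounded by $2M$, and $(j-\ell)a_{j-\ell}$ reindexes to another bounded-multiplier convolution). Hence $B: \ell^2_w \times \ell^2_w \to \ell^2$ is a bounded symmetric bilinear map, so $\underline{x}\mapsto B(\underline{x},\underline{x})$ is a quadratic polynomial map, hence $C^\infty$, and $F$, being a sum $A(\kappa)\underline{x} - \kappa B(\underline{x},\underline{x})$ of $C^\infty$ maps, is $C^\infty$.

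For the derivative formulas, the third step is purely algebraic: for a bounded bilinear $B$ one has $D_{\underline{x}}[B(\underline{x},\underline{x})]\underline{h} = B(\underline{h},\underline{x}) + B(\underline{x},\underline{h})$ and $D^2_{\underline{x}\underline{x}}[B(\underline{x},\underline{x})][\underline{h},\underline{k}] = B(\underline{h},\underline{k}) + B(\underline{k},\underline{h})$, while $D_{\underline{x}}[A(\kappa)\underline{x}]\underline{h} = A(\kappa)\underline{h}$; substituting the componentwise expressions for $A$ and $B$ and reading off the $\ell$-th coordinate gives exactly the three displayed formulas (for the mixed derivative, differentiate $\kappa \mapsto F(\underline{x},\kappa)$ first to get $D_\kappa F = -\ell a_\ell x_\ell - B_\ell(\underline{x},\underline{x})$ componentwise, then apply $D_{\underline{x}}$, producing the $-\ell a_\ell h_\ell$ term and the two symmetrized convolution sums without the $\kappa$ prefactor). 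The continuity/boundedness of each of these multilinear maps between the stated spaces is inherited from the $A$- and $B$-estimates above, which is what makes the Fréchet derivatives legitimate.

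The main obstacle is the bilinear estimate --- specifically, being careful that the ``extra'' weight $\ell^2$ in the target is \emph{not} needed (the target is plain $\ell^2$, not $\ell^2_w$), so I only need $\|B(\underline{x},\underline{y})\|_{\ell^2}$ bounded, and for that the combination ``one factor in $\ell^2$ via the $\ell a_\ell$ multiplier, one factor in $\ell^1$ via the weighted embedding $\ell^2_w \hookrightarrow \ell^1$'' is exactly the right pairing; one must also handle the reindexing of the $j>\ell$ sum (substitute $j = \ell + m$, so $(j-\ell)a_{j-\ell} = m a_m$ and the sum becomes $\sum_{m\ge 1}(( \ell+m)a_{\ell+m} - m a_m)x_{\ell+m}x_m$, again a shifted convolution with bounded multipliers) and confirm the symmetry $B(\underline{x},\underline{y}) = B(\underline{y},\underline{x})$ used to write the first derivative in the symmetric form displayed. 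Everything else is bookkeeping.
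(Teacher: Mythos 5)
Your proposal is correct and follows essentially the same route as the paper: write $F$ as a bounded linear (diagonal) part plus a $\kappa$-scaled bounded bilinear convolution part from $\ell^2_w$ into $\ell^2$, prove boundedness via $\sup_\ell \ell|a_\ell|<\infty$, the embedding $\ell^2_w \hookrightarrow \ell^1$ (Cauchy--Schwarz against $\sum_\ell (1+\ell^2)^{-1}$), and Young's inequality, and then read off the Fr\'echet derivatives from the affine-plus-quadratic polynomial structure. One cosmetic point: the componentwise bilinear form $B$ is not literally symmetric, but this is immaterial since for any bounded bilinear $B$ one has $D_{\underline{x}}[B(\underline{x},\underline{x})]\underline{h}=B(\underline{x},\underline{h})+B(\underline{h},\underline{x})$, which is exactly the symmetrized expression displayed in the lemma.
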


We now define the notions of bifurcation point and bifurcation curve.

\begin{definition}
    A point $(\underline{p}_b, \kappa_b)$ is called a bifurcation point of the map $F$ if $(\underline{p}_b,\kappa_b)$ lies in the closure of the set $\{(\underline{p}, \kappa) \in  \ell^2_w \times \mathbb{R}_+: F(\underline{p}, \kappa)=\underline{0}, \ \underline{p} \neq \underline{p}_b, \ \kappa \neq \kappa_b\}$. 

    Further, if there exists $\delta>0$ and a continuous curve $s \mapsto (\underline{p}(s),\kappa(s))$ mapping $(-\delta, \delta)$ to $\ell^2_w \times \mathbb{R}_+$ such that $(\underline{p}(0), \kappa(0)) = (\underline{p}_b, \kappa_b)$ and $F(\underline{p}(s),\kappa(s)) = 0$ for all $s \in (-\delta, \delta)$, then this curve is called a bifurcation curve. If the bifurcation curve meets the $\kappa$-axis $\{(\underline{0}, \kappa) : \kappa \in \mathbb{R}_+\}$ at at most one $s$, we call it a non-trivial branch of solutions.
\end{definition}

Clearly, $\underline{p} \equiv \underline{0}$ is a solution for all $\kappa$. This corresponds to $p_0 \coloneqq \text{Unif}(\mathbb{S}^1)$, which is always a stationary solution. We call $\kappa \mapsto (\underline{0}, \kappa)$ the `trivial' branch of stationary solutions.

\subsection{First order bifurcations} \label{sec:first_order}The next result characterizes the bifurcation points of $F$ about this trivial branch that are of first order, namely, they are related to a unique Fourier mode of $W$.
\begin{theorem}\label{thm:CRthm}
    Suppose $\sup_{\ell \ge 1} \ell |a_\ell| < \infty$. Assume, for some $\ell^* \in \mathbb{N}$, $a_{\ell^*} >0$ and $a_\ell \neq a_{\ell^*}$ for all $\ell \neq \ell^*$. Then, $\kappa^* := 2/a_{\ell^*}$ is a \emph{pitchfork} bifurcation point of $F$ in the sense that there exists $\delta>0$ 
    and a non-trivial branch of solutions $(\underline{p}(s), \kappa(s)) : (-\delta, \delta) \rightarrow  \ell^2_w \times \mathbb{R}_+$ such that
    \begin{align*}
        \underline{p}(s) = se_{\ell^*} + o(s),
    \end{align*}
where $\{e_j\}_j$ is the canonical basis of $\ell^2$, and $\kappa(\cdot)$ is twice differentiable and satisfies $\kappa(0) = \kappa^*, \kappa'(0) = 0$ and
    \begin{align}\label{eq:curvature}
        \kappa''(0) = \frac{2}{a_{\ell^*}}\cdot R_{\ell^*}(W) = \frac{2}{a_{\ell^*}}\cdot \frac{a_{\ell^*} - 2a_{2\ell^*}}{a_{\ell^*} - a_{2\ell^*}}.
    \end{align}
    Moreover, $(\underline{p}(s), \kappa(s))$ comprise the only even non-trivial solution to~\eqref{eq:F=0} in a neighborhood of $(\underline{0},\kappa^*)$ in $\ell^2_w \times \mathbb{R}_+$. Further, if $\kappa \notin \{ 2/a_\ell\,:\, \ell \in \mathbb{N}\}$, then $\kappa$ cannot be a bifurcation point. 
    \end{theorem}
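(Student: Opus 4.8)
The plan is to apply the Crandall--Rabinowitz bifurcation-from-a-simple-eigenvalue theorem to the map $F: \ell^2_w \times \mathbb{R}_+ \to \ell^2$, using the regularity and derivative formulas from \cref{lem:Freg}. The first step is to identify the linearization along the trivial branch: from \cref{lem:Freg}, $D_{\underline{p}}F(\underline{0},\kappa)\underline{h}$ is the diagonal operator with entries $\ell(2-\kappa a_\ell)h_\ell$. Its kernel is nontrivial precisely when $2-\kappa a_\ell = 0$ for some $\ell$, i.e. $\kappa = 2/a_\ell$; under the hypothesis $a_\ell \neq a_{\ell^*}$ for $\ell \neq \ell^*$, the value $\kappa^* = 2/a_{\ell^*}$ makes the kernel exactly one-dimensional, spanned by $e_{\ell^*}$. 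I would then check the Fredholm/transversality hypotheses: the operator $D_{\underline{p}}F(\underline{0},\kappa^*)$ is Fredholm of index zero (it is diagonal with a single vanishing eigenvalue, and $\inf_{\ell\neq\ell^*}|\ell(2-\kappa^* a_\ell)| > 0$ thanks to $\sup_\ell \ell|a_\ell|<\infty$ and the separation of $a_{\ell^*}$ from the other $a_\ell$'s -- this needs a short argument since the $a_\ell$ could accumulate at $a_{\ell^*}$, but the hypothesis only forbids equality; I should double-check whether an accumulation is genuinely excluded or whether one needs $\liminf|a_\ell - a_{\ell^*}|>0$, which may be where the ``slightly stronger assumptions'' in later theorems come in). The range of $D_{\underline{p}}F(\underline{0},\kappa^*)$ is the closed subspace $\{\underline{y}: y_{\ell^*}=0\}$, with one-dimensional complement spanned by $e_{\ell^*}$. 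The transversality condition is $D^2_{\underline{p}\kappa}F(\underline{0},\kappa^*)e_{\ell^*} \notin \mathrm{Range}(D_{\underline{p}}F(\underline{0},\kappa^*))$; from \cref{lem:Freg} this vector is $-\ell^* a_{\ell^*}e_{\ell^*}$, whose $\ell^*$-component is $-\ell^* a_{\ell^*}\neq 0$, so transversality holds. Crandall--Rabinowitz then yields $\delta>0$ and a $C^\infty$ branch $s\mapsto(\underline{p}(s),\kappa(s))$ with $\underline{p}(s) = se_{\ell^*} + o(s)$, $\kappa(0)=\kappa^*$, and this branch is the unique nontrivial solution family near $(\underline{0},\kappa^*)$ -- this last uniqueness clause is part of the Crandall--Rabinowitz conclusion, giving the ``only even non-trivial solution'' assertion (evenness is automatic since we work in $\ell^2_w$, the Fourier side of $H^1_s$).

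Next I would compute $\kappa'(0)$ and $\kappa''(0)$ by the standard Lyapunov--Schmidt expansion. Write $\underline{p}(s) = s e_{\ell^*} + s^2 \underline{q}_2 + s^3\underline{q}_3 + o(s^3)$ and $\kappa(s) = \kappa^* + s\kappa_1 + s^2\kappa_2 + o(s^2)$, substitute into $F(\underline{p}(s),\kappa(s))=\underline{0}$, and match powers of $s$. At order $s^1$ one recovers $D_{\underline{p}}F(\underline{0},\kappa^*)e_{\ell^*}=\underline{0}$, no information. At order $s^2$, projecting onto $e_{\ell^*}$ (the cokernel direction) kills $\underline{q}_2$ and isolates the quadratic self-interaction term of $F$ together with $\kappa_1$: from the formula for $F_{\ell^*}$, the relevant quadratic contributions at index $\ell^*$ come from the $j>\ell$ sum with $j=2\ell^*$, $j-\ell=\ell^*$, giving a term proportional to $(2\ell^* a_{2\ell^*} - \ell^* a_{\ell^*})$, plus $\kappa_1$ times $\partial_\kappa[\ell^*(2-\kappa a_{\ell^*})]|_{\kappa^*} = -\ell^* a_{\ell^*}$ times the $\ell^*$-component of $e_{\ell^*}$. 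Because the relevant quadratic coefficient here is actually the one feeding into $q_2$ at index $2\ell^*$ rather than at $\ell^*$... let me restate: the $e_{\ell^*}$-projection at order $s^2$ forces $\kappa_1=0$ (this is the pitchfork structure -- the quadratic form $D^2_{\underline{p}\underline{p}}F(\underline{0},\kappa^*)[e_{\ell^*},e_{\ell^*}]$ has vanishing $\ell^*$-component, one checks directly from \cref{lem:Freg} that $[D^2_{\underline{p}\underline{p}}F(\underline{0},\kappa^*)[e_{\ell^*},e_{\ell^*}]]_{\ell^*}=0$ since $j=\ell^*$ is excluded from both sums at $\ell=\ell^*$), whence $\kappa'(0)=0$. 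The nonzero $\ell=2\ell^*$ component of this quadratic form then determines $\underline{q}_2$: solving $D_{\underline{p}}F(\underline{0},\kappa^*)\underline{q}_2 = -\tfrac12 D^2_{\underline{p}\underline{p}}F(\underline{0},\kappa^*)[e_{\ell^*},e_{\ell^*}]$ gives $q_{2,2\ell^*} = \kappa^*(2a_{2\ell^*}-a_{\ell^*})\big/\big(2\cdot 2\ell^*(2-\kappa^* a_{2\ell^*})\big)$ and all other $q_{2,\ell}=0$. Finally, at order $s^3$, projecting onto $e_{\ell^*}$ and using $\kappa_1=0$ yields an equation of the form $-\ell^* a_{\ell^*}\kappa_2 \cdot(\text{coeff}) + (\text{cubic terms involving }e_{\ell^*}\text{ and }\underline{q}_2) = 0$; the cubic terms are the cross-interaction of $e_{\ell^*}$ with $q_{2,2\ell^*}$ through the $F_{\ell^*}$ formula (the $j=2\ell^*$ term in the $j>\ell$ sum pairing $p_{2\ell^*}=s^2 q_{2,2\ell^*}$ with $p_{\ell^*}=s$). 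Collecting and simplifying, substituting $\kappa^*=2/a_{\ell^*}$ and $q_{2,2\ell^*}$, should produce exactly $\kappa''(0)=2\kappa_2 = \tfrac{2}{a_{\ell^*}}\cdot\tfrac{a_{\ell^*}-2a_{2\ell^*}}{a_{\ell^*}-a_{2\ell^*}}$.

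For the final assertion -- that $\kappa\notin\{2/a_\ell:\ell\in\mathbb{N}\}$ cannot be a bifurcation point -- the argument is the implicit function theorem: if $2-\kappa a_\ell \neq 0$ for every $\ell$, then $D_{\underline{p}}F(\underline{0},\kappa)$ is a boundedly invertible diagonal operator on $\ell^2_w\to\ell^2$ (one must verify the inverse, with entries $1/(\ell(2-\kappa a_\ell))$, maps $\ell^2\to\ell^2_w$ boundedly, i.e. $\sup_\ell (1+\ell^2)/(\ell(2-\kappa a_\ell))^2 <\infty$; since $\ell a_\ell$ is bounded, $\ell(2-\kappa a_\ell) = 2\ell - \kappa\ell a_\ell$ grows like $2\ell$, so $(1+\ell^2)/(\ell(2-\kappa a_\ell))^2 \to 1/4$ and the sup is finite provided no denominator vanishes, which holds by assumption -- a short lemma). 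Then the implicit function theorem applied to $F$ at $(\underline{0},\kappa)$ gives a neighborhood in which $\underline{0}$ is the unique solution, so $(\underline{0},\kappa)$ is isolated from other zeros and hence not a bifurcation point. The main obstacle I anticipate is the bookkeeping in the order-$s^3$ Lyapunov--Schmidt step: correctly extracting which index-$\ell^*$ terms in the intricate double-sum structure of $F_{\ell^*}$ contribute at cubic order, and tracking the combinatorial factors so that the final ratio comes out cleanly as $R_{\ell^*}(W)$; a secondary subtlety is confirming the Fredholm property when the $a_\ell$ may accumulate at $a_{\ell^*}$, which may force a harmless strengthening of the hypothesis or a more careful spectral-gap argument.
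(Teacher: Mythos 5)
Your overall strategy is exactly the paper's: Crandall--Rabinowitz at the simple eigenvalue $\kappa^*=2/a_{\ell^*}$, transversality from $D^2_{\underline{p}\kappa}F(\underline{0},\kappa^*)e_{\ell^*}=-\ell^*a_{\ell^*}e_{\ell^*}$, $\kappa'(0)=0$ from the vanishing $\ell^*$-component of the quadratic form, the curvature from the cubic-order Lyapunov--Schmidt step (the paper invokes Kielh\"ofer's formula (I.6.9)/(I.6.11) for $D^3_{uuu}\Phi$, which is the same computation as your power-series matching), and the implicit function theorem for the non-bifurcation claim. Your worry about the Fredholm/spectral-gap issue is unfounded: $\sup_\ell \ell|a_\ell|<\infty$ forces $a_\ell\to 0$, so the $a_\ell$ cannot accumulate at $a_{\ell^*}>0$; together with $a_\ell\neq a_{\ell^*}$ for $\ell\neq\ell^*$ this gives $\inf_{\ell\neq\ell^*}|a_{\ell^*}-a_\ell|>0$, the diagonal entries $\ell(2-\kappa^*a_\ell)$ grow like $2\ell$, and the restricted inverse is bounded from $\ell^2$ into $\ell^2_w$; no strengthening of the hypothesis is needed.

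There is, however, a concrete error in your second-order coefficient that would derail the curvature computation if carried through as written. From \cref{lem:Freg}, the only nonzero entry of $D^2_{\underline{p}\underline{p}}F(\underline{0},\kappa^*)[e_{\ell^*},e_{\ell^*}]$ sits at index $2\ell^*$ and comes from the $j<\ell$ sum with $j=\ell^*$, so its value is $-2\kappa^*\ell^* a_{\ell^*}$ (coefficient $ja_j=\ell^*a_{\ell^*}$, symmetrization factor $2$); solving $D_{\underline{p}}F\,\underline{q}_2=-\tfrac12 D^2_{\underline{p}\underline{p}}F[e_{\ell^*},e_{\ell^*}]$ therefore gives
\begin{align*}
q_{2,2\ell^*}=\frac{\kappa^* a_{\ell^*}}{2\,(2-\kappa^* a_{2\ell^*})},
\end{align*}
not $\kappa^*(2a_{2\ell^*}-a_{\ell^*})/\bigl(2\cdot 2\ell^*(2-\kappa^* a_{2\ell^*})\bigr)$ as you wrote. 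The combination $2\ell^* a_{2\ell^*}-\ell^* a_{\ell^*}$ is the coefficient that appears only at the cubic step, when the $2\ell^*$ mode feeds back into the index-$\ell^*$ equation through the $j>\ell$ sum with $j=2\ell^*$: projecting the order-$s^3$ terms onto $e_{\ell^*}$ gives $-\kappa^*\ell^*(2a_{2\ell^*}-a_{\ell^*})\,q_{2,2\ell^*}=\tfrac12\kappa''(0)\,\ell^* a_{\ell^*}$, and inserting the correct $q_{2,2\ell^*}$ and $\kappa^*=2/a_{\ell^*}$ yields $\kappa''(0)=2(a_{\ell^*}-2a_{2\ell^*})/\bigl(a_{\ell^*}(a_{\ell^*}-a_{2\ell^*})\bigr)$ as in \eqref{eq:curvature}. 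With your value of $q_{2,2\ell^*}$ the same projection would produce $\kappa''(0)\propto -(2a_{2\ell^*}-a_{\ell^*})^2/(2-\kappa^*a_{2\ell^*})$, which has a fixed sign and cannot reproduce the sub/supercritical dichotomy; so the error is not a harmless typo but a misattribution of the interaction coefficient, and it must be fixed for the claimed formula to come out.
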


\begin{figure}[t]
\centering
\begin{tikzpicture}
\begin{axis}[
    width=8cm,
    height=8cm,
    axis lines=center,
    axis line style={gray!30},
    xlabel={s},
    ylabel={$\kappa - \kappa^*$},
    xmin=-2.5, xmax=2.5,
    ymin=-2, ymax=2,
    xtick=\empty,
    ytick=\empty,
    samples=100,
    thick,
]

\addplot[blue, solid] coordinates {(0,0) (0,2)};
\addplot[blue, solid, domain=-2:0] {-x^2};
\addplot[blue, solid, domain=0:2]  {-x^2};

\addplot[red, dotted, very thick] coordinates {(0,-2) (0,0)};
\addplot[red, dotted, very thick, domain=-2:2] {x^2};

\addplot[black!60!green, dashdotdotted, very thick, domain=-2.5:2.5, samples=200]
  { (x<0 ? -pow(-x,1/3) : (x>0 ? pow(x,1/3) : 0)) };

\end{axis}
\end{tikzpicture}
\caption{Local pitchfork (i.e., subcritical and supercritical) and transcritical bifurcation types for the McKean-Vlasov stationary equation near the trivial solution at $(p,\kappa)=(0,\kappa^*)$: Solid (blue) line corresponds to the subcritical case. Dashed (red) line corresponds to the supercritical case. Dashed and dotted (dark green) line corresponds to the transcritical case. The vertical axis is the dimensionless deviation $\kappa-\kappa^*$ (zero at the bifurcation point) and the horizontal axis is the modal amplitude $s \;=\; \langle e_{\ell^*},\, p\rangle$. The vertical axis corresponds to the uniform solution $p_0$. }
\label{fig:bifurcation}
\end{figure}
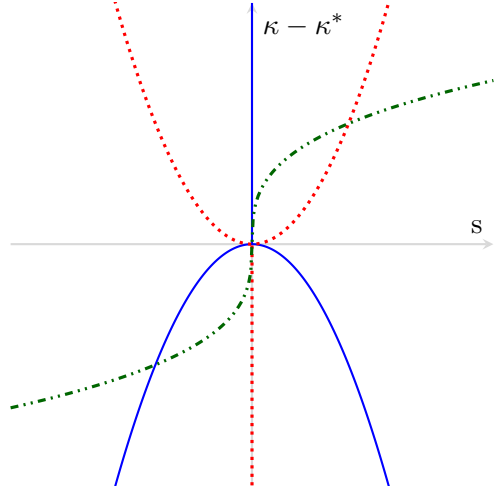

    \begin{remark}\label{rem:CRrem}
        \cref{thm:CRthm} closely resembles \citet[Theorem 4.2 and Corollary 4.3]{carrillo2020long}. However, there are several key differences, as highlighted below.
        \begin{itemize}
            \item[(i)]\citet[Theorem 4.2 and Corollary 4.3]{carrillo2020long} study bifurcation points and associated solutions for the equation $F_\textsc{cgps}(\rho,\kappa) = 0$, where $F_\textsc{cgps}: L_s^2(\mathbb{S}^1) \times \mathbb{R}_+ \rightarrow L_s^2(\mathbb{S}^1)$ corresponds to the \emph{fixed point map} for the associated McKean-Vlasov equation given by
            \begin{align}\label{eq:CGPSfpe}
                F_\textsc{cgps}(\rho,\kappa) := \rho - \frac{1}{Z}e^{\kappa W\star \rho},
            \end{align}
            where $Z := \frac{1}{2\pi}\int_0^{2\pi}e^{\kappa (W\star \rho)(s)}\d s$ is the normalizing factor. Although their results exhibit bifurcations for $F_\textsc{cgps}$ when $W \in  L_s^2(\mathbb{S}^1)$, it can be directly related to non-trivial solution branches of the stationary McKean-Vlasov equation \eqref{eq:stat} only when $W$ has higher regularity, namely, $W \in H_s^1(\mathbb{S}^1)$ (see \citet[Theorem 2.3 and Proposition 2.4]{carrillo2020long}). For the Keller-Segel model treated in \citet[Section 6.5]{carrillo2020long}, which has a potential not in $H_s^1(\mathbb{S}^1)$, a case-specific argument is required. In comparison, the bifurcation theory for $F$, treated in \cref{thm:CRthm}, is in one-to-one correspondence with that for solutions of \eqref{eq:stat} for any $W$ satisfying $\sup_{\ell \ge 1} \ell |a_\ell| < \infty$, which allows for more singular $W$ in $L_s^2(\mathbb{S}^1)$. Moreover, the conclusion that there are no bifurcation points other than the ones of the form $2/a_\ell, \, \ell \in \mathbb{N}$, carries over to solutions of \eqref{eq:stat} via this equivalence to zeros of $F$.\\
            \item[(ii)] $F_\textsc{cgps}$ is defined on a function space and involves exponential mappings, which makes the associated calculus quite intricate and prone to errors. In comparison, $F$ is a quadratic function on a sequence space which makes it significantly easier to analyze. In \citet[Theorem 4.2]{carrillo2020long}, working with  Fr\'echet derivatives of $F_\textsc{cgps}$ led to an error in computing $\kappa''(0)$, the curvature of the bifurcation curve (\citet[Equation (I.6.11)]{kielhofer2012bifurcation} was erroneously applied), which gave a curvature $\kappa''(0) = 4\pi\kappa^*/3$ that is always positive irrespective of the choice of $W$. \cref{eq:curvature} gives the correct curvature. We see that the bifurcation can be subcritical (left-sided), critical (vertical in the sense $\kappa''(0)=0$) or supercritical (right-sided) depending on the behavior of the quantity $\frac{a_{\ell^*} - 2a_{2\ell^*}}{a_{\ell^*} - a_{2\ell^*}}$. This will also be useful later in quantifying discontinuous phase transitions (see \cref{sec:dpt}).
        \end{itemize}
    \end{remark}
\subsubsection{Specialization to noisy mean-field Transformers} We now specialize to the \emph{noisy mean-field Transformer}. Recall that this model employs the potential $W_\beta$ defined in~\eqref{eq:NMFTpotential}. From results in \cite{gates1970van} (see also \citet[Proposition 2.9]{chayes2010mckean}), it follows that the uniform solution loses stability when \(\kappa > 2/a_1 = \beta/I_1(\beta)\). This also corresponds to the first bifurcation point.
It can be checked that the map $\ell \mapsto I_\ell(\beta)$ is strictly decreasing for any $\beta>0$. Thus, substituting the corresponding coefficients into~\cref{thm:CRthm} yields that, 
for any \(\ell \in \mathbb{N}\), there is a bifurcation point at
\[
    \kappa_{\ell}^*(\beta) = \frac{2}{a_{\ell}(\beta)} 
    = \frac{\beta}{I_{\ell}(\beta)},
\]
corresponding to the $\ell$-th Fourier mode of $W_\beta$. This fact is likewise observed for Noisy Transformers on the sphere in \citet[Proposition~6.1]{shalova2024solutions}.


By~\cref{thm:CRthm}, the curvature of the bifurcating branch at the $\ell$-th bifurcation point is given explicitly by
\[
    \kappa''(0) 
    = \frac{\beta\bigl(I_{\ell}(\beta) - 2 I_{2\ell}(\beta)\bigr)}
    {I_{\ell}(\beta)\bigl(I_{\ell}(\beta) - I_{2\ell}(\beta)\bigr)}.
\]
Denoting $R_\ell(W_\beta(\theta))$ by $R_\ell(\beta)$ for simplicity, the bifurcation type is thus determined by the sign of
\[
R_\ell(\beta) \;=\; \frac{a_\ell-2a_{2\ell}}{a_\ell-a_{2\ell}}
\;=\;
\frac{I_\ell(\beta)-2I_{2\ell}(\beta)}{I_\ell(\beta)-I_{2\ell}(\beta)}.
\]
Note that $R_\ell(\beta)$ is a strictly decreasing function of $\frac{I_{2\ell}(\beta)}{I_\ell(\beta)}$.  From Tur\'an-type inequalities (see \cite{baricz2010turan}),
$$
I_k^2(\beta) \ge I_{k-1}(\beta)I_{k+1}(\beta)
$$
for all $k \in \mathbb{N}$ and $\beta>0$. This implies that the map $k \mapsto \frac{I_{k+1}(\beta)}{I_k(\beta)}$ is decreasing in $k$. Hence,
$$
\frac{I_{2\ell}(\beta)}{I_\ell(\beta)} = \prod_{k=\ell}^{2\ell-1}\frac{I_{k+1}(\beta)}{I_k(\beta)}
$$
is decreasing in $\ell$. Combining these observations, we conclude that $\ell \mapsto R_\ell(\beta)$ is increasing in $\ell$ for every $\beta>0$. Moreover, as $\beta \mapsto \frac{I_{2\ell}(\beta)}{I_\ell(\beta)}$ is strictly increasing in $\beta$, which can again be verified using Tur\'an-type inequalities, we have $\beta \mapsto R_\ell(\beta)$ is strictly decreasing in $\beta$ for fixed $\ell \in \mathbb{N}$. See~\cref{fig:rellbeta} for a visualization.

\begin{figure}[t]
    \centering
    \includegraphics[width=0.5\linewidth]{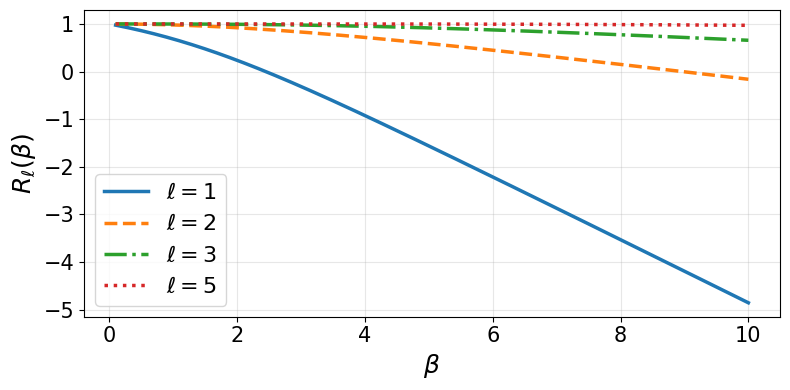}
    \caption{Plot of $R_\ell(\beta)$ vs. $\beta$: For fixed $\ell$, $R$ is strictly decreasing in $\beta$ and for every $\beta$, $R$ is increasing in $\ell$. }
    \label{fig:rellbeta}
\end{figure}

Thus, for any $\ell \in \mathbb{N}$, there exists a unique $\beta_\ell>0$ such that the associated bifurcation at $\kappa^*_\ell(\beta)$ is supercritical for $\beta< \beta_\ell$, critical for $\beta=\beta_\ell$ and subcritical for $\beta>\beta_\ell$. For $\ell=1$, $\beta_1 \approx 2.447$; see also \cref{thm:phasetr}. For $\beta < \beta_1$, all bifurcations are supercritical. For $\beta> \beta_1$, the first few bifurcations are subcritical, followed by possibly some critical ones and the subsequent bifurcations are all of supercritical type.

Further, for fixed $\beta>0$, $\frac{\beta}{I_\ell(\beta)} \to \infty$ and $\frac{I_{2\ell}(\beta)}{I_\ell(\beta)} \to 0$, and hence $R_\ell(\beta) \to 1$, as $\ell \to \infty$. Thus, the curvature of subsequent bifurcations increases to infinity with $\ell$. 

It can be checked that $\lim_{\beta \to 0}\frac{\beta}{I_1(\beta)} = 2$ and $\lim_{\beta \to 0}\frac{\beta}{I_\ell(\beta)} = \infty$ for $\ell \ge 2$, and $\lim_{\beta \to \infty}\frac{\beta}{I_\ell(\beta)} = 0$ for all $\ell \in \mathbb{N}$. Thus, as $\beta \to 0$, the first bifurcation point approaches $2$ and the subsequent points diverge. This is consistent with the fact that, in the Kuramoto model with potential $W(\theta) = \cos \theta = \lim_{\beta \to 0}W_\beta(\theta)$, there is a unique bifurcation point. As $\beta \to \infty$, all the bifurcation points accumulate near $0$.

Note that $\lim_{\beta \to 0}\frac{I_{2\ell}(\beta)}{I_\ell(\beta)} = 0$, and hence $\lim_{\beta \to 0} R_\ell(\beta) = 1$, for all $\ell \in \mathbb{N}$. 
Thus, as $\beta \to 0$, the curvature at the first bifurcation point approaches $2$ and that at subsequent ones approaches $\infty$.

Using the following Laplace/large-\(\beta\) expansion for fixed $\ell \in \mathbb{N}$,
\begin{align}\label{largebetaIbeta}
I_\ell(\beta)=\frac{e^\beta}{\sqrt{2\pi\beta}}\Big(1-\frac{4\ell^2-1}{8\beta}+O(\beta^{-2})\Big),
\qquad \beta\to\infty,
\end{align}
we conclude that, for large $\beta$,
$$
R_\ell(\beta) = -\frac{2\beta}{3\ell^2} + O(1), \quad \frac{\beta}{I_\ell(\beta)} = \sqrt{2\pi}\beta^{3/2}e^{-\beta}(1+ o(1)), \qquad \beta\to\infty.
$$
These imply 
$$
\lim_{\beta \to \infty} \kappa''(\kappa_{\ell}^*(\beta)) = \lim_{\beta \to \infty} \frac{\beta}{I_\ell(\beta)}R_\ell(\beta) = 0.
$$
Thus, as $\beta \to \infty$, the bifurcations `approach criticality from the subcritical side'.




    \subsection{Periodicity} The next result gives periodicity properties of non-trivial solutions.
    \begin{theorem}\label{thm:period}
        Suppose $\sup_{\ell \ge 1} \ell |a_\ell| < \infty$. 
        
        (a) Assume, for some $m \in \mathbb{N}$, $a_{m} >0$ and $a_{\ell m} \neq a_{m}$ for all $\ell >1$. Then, there exists a non-trivial \textbf{$m$-periodic} branch of solutions $(\underline{p}(s), \kappa(s)) : (-\delta, \delta) \rightarrow  \ell^2_w \times \mathbb{R}_+$ at bifurcation point $2/a_m$ that has the following description. Let $W^{(m)}(\theta) := \sum_{\ell=1}^\infty a_{\ell m} \cos \ell\theta, \ \theta \in [0,2\pi),$ and consider the unique non-trivial branch of solutions $(\underline{p}^{(m)}(s), \kappa^{(m)}(s)) : (-\delta^{(m)}, \delta^{(m)}) \rightarrow  \ell^2_w \times \mathbb{R}_+$ at bifurcation point $\kappa_m := 2/a_m$ for \eqref{eq:stat} with potential $W^{(m)}$ whose existence is guaranteed by \cref{thm:CRthm}. Then, $\delta = \delta^{(m)}$ and for any $s \in (-\delta, \delta)$,
        \begin{align*}
            \kappa(s) &= \kappa^{(m)}(s),\\
            p_j(s) &= 0 \ \forall \ j \ \text{ such that } m \nmid j,\\
            p_{\ell m}(s) &= p^{(m)}_{\ell}(s), \ \ell \in \mathbb{N}.
        \end{align*}
        The above relations connect the associated bifurcating branches of stationary solutions with potentials $W$ and $W^{(m)}$ in terms of their Fourier coefficients. In particular, if $a_\ell \neq a_{m}$ for all $\ell \neq m$, this gives the unique non-trivial branch of solutions at bifurcation point $2/a_m$.

        (b) Assume that, for some $m \in \mathbb{N}$, $a_{m} >0$. 
        Let $S:= \{\ell \in \mathbb{N} : a_\ell = a_m\}$.
        Further, assume $\sum_{j=1}^\infty j^2a_j^2 < \infty$ (that is, $W \in H^1_s(\mathbb{S}^1)$) and
        $$
        \operatorname{gcd}(S) = g>1.
        $$
        Then, for any non-trivial branch of solutions $(\underline{p}(s), \kappa(s)) : (-\delta, \delta) \rightarrow  \ell^2_w \times \mathbb{R}_+$ around the bifurcation point $2/a_m$, we must necessarily have $\delta' \in (0,\delta]$ such that
        $$
        p_{\ell}(s) = 0 \ \forall s \in (-\delta', \delta') \ \text{ if } \ g \nmid \ell.
        $$
        In other words, the solution is \textbf{$g$-periodic}.
    \end{theorem}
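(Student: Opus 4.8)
The plan is to prove both parts by exploiting the quadratic structure of $F$ from \cref{lem:lemma1} and reducing, in each case, to an application of \cref{thm:CRthm} for an auxiliary potential.

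For part (a), I would first observe that the substitution $\underline{q} = (q_k)_{k \ge 1}$ with $q_k := p_{km}$ and all other modes set to zero is consistent with the system \eqref{eq:Fid}: if one posits $p_j = 0$ whenever $m \nmid j$, then in the two sums defining $F_{\ell m}(\underline{p},\kappa)$ only terms indexed by multiples of $m$ survive. Writing $j = am$ and $\ell - j = bm$ in the first sum, and $j = am$, $j - \ell = bm$ in the second, and using $a_{am} = a^{(m)}_a$ (the Fourier coefficients of $W^{(m)}$), the equation $F_{\ell m}(\underline{p},\kappa)=0$ becomes, after dividing by $m$, exactly the equation $F^{(m)}_\ell(\underline{q},\kappa) = 0$ for the potential $W^{(m)}$ (note the prefactor $\ell m (2-\kappa a_{\ell m}) = m \cdot \ell(2 - \kappa a^{(m)}_\ell)$ and each $j a_j = am \cdot a^{(m)}_a$ likewise carries a factor $m$). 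Conversely, one must check that the equations $F_{r}(\underline{p},\kappa)=0$ for $m \nmid r$ are automatically satisfied when $p_j = 0$ for $m \nmid j$: in that case every product $p_j p_{r-j}$ (resp.\ $p_j p_{j-r}$) appearing in $F_r$ has at least one factor with a non-multiple-of-$m$ index, since $m \nmid r$ forces $m \nmid j$ or $m \nmid (r-j)$, so $F_r \equiv 0$ identically on this subspace. This establishes an exact bijection, linear and $\kappa$-preserving, between solution branches of $F=0$ lying in the subspace $V_m := \{\underline{p}: p_j = 0 \text{ if } m\nmid j\}$ and solution branches of $F^{(m)}=0$. The hypothesis $a_{\ell m} \neq a_m$ for $\ell > 1$ is exactly the non-degeneracy condition of \cref{thm:CRthm} applied to $W^{(m)}$ at $\ell^* = 1$, so that theorem produces the branch $(\underline{p}^{(m)}(s),\kappa^{(m)}(s))$ with $\underline{p}^{(m)}(s) = s e_1 + o(s)$; transporting it back through the bijection gives the claimed $m$-periodic branch with the stated identities $\kappa(s) = \kappa^{(m)}(s)$, $p_{\ell m}(s) = p^{(m)}_\ell(s)$, $p_j(s) = 0$ for $m \nmid j$, and $\delta = \delta^{(m)}$. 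The final uniqueness sentence follows because, if additionally $a_\ell \neq a_m$ for all $\ell \neq m$, then \cref{thm:CRthm} applied directly to $W$ at $\ell^* = m$ asserts uniqueness of the non-trivial branch near $(\underline 0, 2/a_m)$, which must therefore coincide with the one just constructed.

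For part (b), let $(\underline{p}(s),\kappa(s))$ be any non-trivial branch near $(\underline 0, 2/a_m)$, with $\kappa(0) = \kappa^* := 2/a_m$. The key point is that the linearization $D_{\underline p}F(\underline 0, \kappa^*)$ is diagonal with $\ell$-th entry $\ell(2 - \kappa^* a_\ell) = \ell(2 - 2 a_\ell/a_m)$, which vanishes precisely for $\ell \in S$; hence the kernel of the linearization is $\mathrm{span}\{e_\ell : \ell \in S\}$. By a Lyapunov–Schmidt reduction (as in the proof of \cref{thm:CRthm}), along any non-trivial branch $\underline p(s)$ the modes $p_\ell(s)$ with $\ell \notin S$ are $O(s^2)$ while $\sum_{\ell \in S} p_\ell(s) e_\ell$ carries the leading order. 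The strategy is then to show by strong induction on $n$ that $p_n(s) \equiv 0$ on a (possibly shrunk) interval $(-\delta',\delta')$ whenever $g \nmid n$. Consider the smallest such $n$ with $p_n(s) \not\equiv 0$; examine $F_n(\underline p(s),\kappa(s)) = 0$, i.e.
\begin{align*}
n(2 - \kappa(s) a_n)\, p_n(s) = \kappa(s)\sum_{j<n} j a_j\, p_j(s)\, p_{n-j}(s) + \kappa(s)\sum_{j>n}(ja_j - (j-n)a_{j-n})\, p_j(s)\, p_{j-n}(s).
\end{align*}
Since $g \nmid n$, a coefficient $a_j$ (for $j \in S$) cannot be paired in a way that both factors are `active'; more carefully, using the analyticity of the branch (the zeros of $F$ form an analytic variety since $F$ is a polynomial map, or one uses Crandall–Rabinowitz regularity) one expands all $p_k(s)$ in power series in $s$ and argues term by term that the right-hand side, modulo already-established vanishing, cannot balance the left-hand side to produce a nonzero $p_n$. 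The cleanest route: set $T := \{\ell : p_\ell(s) \not\equiv 0\}$; the surviving product structure in the equations $F_\ell = 0$ forces $T$ to be closed under the operations $(\ell, j) \mapsto \ell + j$ and $(\ell, j)\mapsto |\ell - j|$ for $j$ with nonzero contributing coefficient, and near the bifurcation the "generators" of $T$ must lie in $S$ (since off-$S$ modes are quadratically slaved to on-$S$ modes), whence $T \subseteq g\mathbb{Z}$ because $S \subseteq g\mathbb{Z}$ by definition of $g = \gcd(S)$ and $g\mathbb{Z}$ is closed under addition and absolute difference.

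The main obstacle is making rigorous the slaving argument in part (b): one must show that the leading-order behavior of a general non-trivial branch at a bifurcation point with a \emph{multi-dimensional} kernel is genuinely supported on $S$, and then propagate this through the infinite quadratic system to conclude $T \subseteq g\mathbb Z$. The assumption $W \in H^1_s(\mathbb S^1)$ (i.e.\ $\sum j^2 a_j^2 < \infty$) is presumably what is needed to control the tail of the Fourier series and ensure the branch lives in $\ell^2_w$ with enough regularity to justify term-by-term expansion; I would expect the actual proof to either invoke a Lyapunov–Schmidt reduction onto $\mathrm{span}\{e_\ell : \ell \in S\}$ and analyze the resulting finite/reduced equations, or to argue directly with the analyticity of solution curves of the polynomial map $F$. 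Either way, the combinatorial heart — "the support of any bifurcating solution is contained in the subgroup generated by $S$" — rests on the fact that $g\mathbb Z$ is exactly the set closed under sums and absolute differences containing $S$, which is clean; the analytic bookkeeping around it is the delicate part.
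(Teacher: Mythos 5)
Your part (a) is correct and is essentially the paper's argument: restrict to the subspace $\{\underline p : p_j=0 \text{ if } m\nmid j\}$, check that the equations $F_r=0$ for $m\nmid r$ hold automatically (since $m\nmid r$ forces one factor in every product to vanish), observe that after cancelling a factor of $m$ the equations indexed by $\ell m$ become exactly the system \eqref{eq:Fid} for the potential $W^{(m)}$, and then pull back the \cref{thm:CRthm} branch for $W^{(m)}$ at $\ell^*=1$; the uniqueness sentence follows from \cref{thm:CRthm} applied to $W$ itself. No issues there.

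Part (b), however, has a genuine gap, and it is precisely at the point you flag as ``the delicate part.'' Your strong induction on the smallest $n$ with $g\nmid n$ and $p_n\not\equiv 0$ does not close: in the equation for mode $n$ the second sum contains terms $\kappa\,(ja_j-(j-n)a_{j-n})\,p_j p_{j-n}$ with $j>n$, and when $g\nmid n$ the ``bad'' factor in such a term may be $p_j$ with $j>n$, which the induction hypothesis does not control. The alternative ``support-closure'' argument is also insufficient as stated: the equations only say that a bad mode $p_\ell$ (with $2-\kappa a_\ell$ bounded away from $0$) is a quadratic expression in other modes, and this is perfectly consistent with self-sustaining configurations in which every bad mode is fed by a product of one bad mode and one $S$-mode (e.g.\ a support of the form $g\mathbb{N}\cup(n_0+g\mathbb{N})$ with $g\nmid n_0$), so ``$T$ is generated by $S$'' does not follow from combinatorics alone. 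Analyticity of the branch in $s$ is likewise not available — the definition of a bifurcation curve only gives continuity — so a term-by-term power-series argument is not justified. What the paper does instead is quantitative, not combinatorial: by \cref{lem:lemma2} all modes are bounded by $C\sum_{k\in S}|\chi_k|$, and then (\cref{lem:lemma4}) the vector $\underline z=(|\chi_\ell| : g\nmid \ell)$ satisfies the entrywise inequality $\underline z\le M\underline z$, where each entry of $M$ carries a factor $\sum_{k\in S}|\chi_k|$, because for $g\nmid\ell$ every product in \eqref{eq:chieq} contains exactly one bad factor and one factor bounded by $C\sum_{k\in S}|\chi_k|$. Using the uniform gap $|2/\kappa-a_i|\ge\gamma>0$ for all $i$ with $g\nmid i$ (this uses $a_i\ne a_m$ off $S$ and $a_n\to 0$) and the hypothesis $\sum_j j^2a_j^2<\infty$, one bounds $\|M\|_{HS}^2\lesssim \gamma^{-2}\bigl(\sum_{k\in S}|\chi_k|\bigr)^2\bigl(\sum_i i^{-2}\bigr)\bigl(\sum_j j^2a_j^2\bigr)<1$ near the bifurcation, so $\underline z=0$ identically. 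This contraction step is the missing ingredient in your proposal, and it is also where the assumption $W\in H^1_s(\mathbb S^1)$ is actually used.
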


\begin{remark}\label{rem:singpot}
The assumption $\sup_{\ell\ge1} \ell |a_\ell| < \infty$ ensures that the Fourier coefficients decay at least as $O(1/\ell)$, implying that 
$$
W \in H_s^{\alpha}(\mathbb{S}^1) := \{f(\theta) = \sum_{\ell = 1}^\infty f_\ell \cos \ell \theta : \sum_{\ell=1}^\infty (1 + \ell^2)^\alpha f_\ell^2 < \infty\}
$$ 
for every $\alpha < \tfrac{1}{2}$, but it does \emph{not} require continuity or boundedness of $W$. 
For example, consider the potential
\[
    W(\theta) = \sum_{\ell=1}^\infty \frac{\cos(\ell\theta)}{\ell}
    = -\log\!\big(2\sin(\theta/2)\big), \ \theta \in [0, 2\pi),
\]
which we revisit in~\cref{sec:global} in more detail. It satisfies $\sup_{\ell \ge 1}\ell|a_\ell|<\infty$ but diverges logarithmically at $\theta = 0$. 
In particular, this assumption is strictly weaker than the Zygmund class condition~(\citet[Chapter~V]{zygmund2002trigonometric}), 
which additionally requires uniform control on the discrete derivative $a_{\ell+1}-a_\ell$. 

\end{remark}

    \begin{remark}\label{rem:per}
    (a) Suppose $a_1>a_2>\dots>0$. By \cref{thm:period}(a), as the interaction intensity $\kappa$ grows (equivalently, if the noise becomes smaller), the periodicity of the non-trivial stationary solutions at subsequent bifurcation points increases. This indicates a form of metastability where the modes of the density which are far enough apart do not strongly interact with each other and thereby persist. 

    (b) The hypotheses of \cref{thm:period}(b) allow for the null-space of the derivative map $D_{\underline{p}}F(\underline{0}, \frac{2}{a_m})$ to have (finite) dimension greater than one, leading to possibly multiple bifurcating non-trivial solutions around $\kappa = \frac{2}{a_m}$. \cref{thm:period}(b) says that, provided $g>1$, any such solution is $g$-periodic. \cref{thm:period}(a) exhibits the existence of an $m$-periodic solution.
\end{remark}

\subsection{Higher order bifurcations}

The next result investigates bifurcations at points corresponding to multiple identical Fourier modes of $W$, which result in bifurcating stationary solutions with possibly multiple dominant Fourier modes. Even when multiple Fourier modes of $W$ are not equal but sufficiently close to each other, this result can be used to provide an informal explanation for how this recipe creates `approximate' stationary solutions as candidates for metastable states; see \cref{rem:metahigh}.

\begin{theorem}\label{thm:high}
Assume $\sup_{\ell \ge 1} \ell |a_\ell| < \infty$.

    (a) (No resonance case) Suppose that there exists $a>0, k \in \mathbb{N}$ and a finite set $\Lambda = \{\ell_1, \dots, \ell_k\} \subset \mathbb{N}$ with $\ell_1 < \ell_2 < \dots < \ell_k$ such that $a_j = a$ for all $j \in \Lambda$ and $a_j <a$ for all $j \notin \Lambda$. Moreover, assume $\Lambda$ satisfies the following: 
    \begin{itemize}
        \item[(i)] $\ell_i + \ell_j \notin \Lambda$ for any $1 \le i,j\le k$ (`no (destructive) resonance' assumption);
        \item[(ii)] For any $1 \le i,j, r \le k$, $\ell_i + \ell_j + \ell_r \notin \Lambda$;
        \item[(iii)] For any $1 \le i,j, r, t \le k$, $\ell_i + \ell_j = \ell_r + \ell_t$ if and only if $\{i,j\} = \{r,t\}$.
    \end{itemize}
    (In the above, $i,j,r,t$ need not be distinct.)
    
 Define the matrix for $1 \le i,j \le k$:
    \begin{align*}
    B_{\ell_i \ell_i} &:= \frac{a-2a_{2\ell_i}}{a-a_{2\ell_i}},\\ 
    B_{\ell_i\ell_j} &:= \frac{2}{\ell_i}\left[\frac{\ell_j a - (\ell_i + \ell_j)a_{\ell_i + \ell_j}}{a - a_{\ell_i + \ell_j}} + \frac{(\ell_j - \ell_i)a_{\ell_j - \ell_i} - \ell_j a}{a-a_{\ell_j - \ell_i}}\right], \quad i < j,\\
    B_{\ell_i\ell_j} &:= \frac{2}{\ell_i}\left[\frac{\ell_j a - (\ell_i + \ell_j)a_{\ell_i + \ell_j}}{a - a_{\ell_i + \ell_j}} - \frac{(\ell_i - \ell_j)a_{\ell_i - \ell_j} + \ell_j a}{a-a_{\ell_i - \ell_j}}\right], \quad i > j.
    \end{align*}
    Suppose, for some $m \le k$, there exists a sub-collection of indices $\{i_1,\dots, i_m\} \subset \{1,\dots,k\}$ such that the submatrix $\tilde{B}_{rt} := B_{\ell_{i_r} \ell_{i_t}}, 1 \le r,t \le m$, is invertible and all the entries of $\tilde{B}^{-1}\mathbf{1}$ are non-zero and of the same sign $\operatorname{sgn}\left(\tilde{B}^{-1}\mathbf{1}\right)$. Denote by $b_r$ the square root of the absolute value of the $r$-th entry of $\tilde{B}^{-1}\mathbf{1}$. Then there is $\delta>0$ and a non-trivial branch of solutions $(\underline{p}(s), \kappa(s)) : (-\delta, \delta) \rightarrow  \ell^2_w \times \mathbb{R}_+$ around $\kappa = 2/a$ such that
    \begin{align*}
        \underline{p}(s) = s\sum_{r=1}^m b_r e_{\ell_{i_r}} + O(s^2), \qquad \kappa(s) = \frac{2}{a}\left(1 + \operatorname{sgn}\left(\tilde{B}^{-1}\mathbf{1}\right)\frac{s^2}{2}\right).
    \end{align*}
    We also have $\underline{p}'(s) = \sum_{r=1}^m b_r e_{\ell_{i_r}} + O(s)$, for $s \in (-\delta, \delta)$.
    In particular, all such bifurcations are of \textbf{pitchfork type} ($\kappa(\cdot)$ convex or concave with $\kappa'(0)=0$). The bifurcation is supercritical if $\operatorname{sgn}\left(\tilde{B}^{-1}\mathbf{1}\right) = +1$ and subcritical if $\operatorname{sgn}\left(\tilde{B}^{-1}\mathbf{1}\right) = -1$. 

    (b) (Resonance case) Suppose there exists $a>0$ and $l,m \in \mathbb{N}$, with $m < l$ and $l \neq 2m$, such that $a_l = a_m = a_{l+m} = a$ and $a_j <a$ for all $j \neq l,m, l+m$. Then we have four non-trivial bifurcating branches around $\kappa = 2/a$ given by $(\underline{p}^{\sigma}(s), \kappa(s)) : (-\delta, \delta) \rightarrow  \ell^2_w \times \mathbb{R}_+$, where $\sigma = (\sigma_1, \sigma_2, \sigma_3)$ with $\sigma_i \in \{+1,-1\}$ for $i=1,2,3,$ with $\sigma_1 \sigma_2\sigma_3 = 1$, with the following description. For each $\sigma$, 
    \begin{align*}
        \underline{p}^\sigma(s) = s\left(\sigma_1 e_l + \sigma_2 e_m + \sigma_3 e_{l+m}\right) + O(s^2), \qquad \kappa(s) = \frac{2}{a}(1- s), \quad s \in (-\delta, \delta).
    \end{align*}
    We also have $\underline{p}'(s) = \sigma_1 e_l + \sigma_2 e_m + \sigma_3 e_{l+m} + O(s)$, for $s \in (-\delta, \delta)$.
    In particular, such bifurcating solutions are \textbf{transcritical}, that is, $\kappa'(0) \neq 0$.
\end{theorem}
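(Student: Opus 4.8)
The plan is to prove both parts by \emph{Lyapunov--Schmidt reduction}, exploiting the fact — special to the sequence-space formulation — that the linearization $D_{\underline p}F(\underline 0,\kappa^*)$ at $\kappa^*=2/a$ is \emph{diagonal} (set $\underline x=\underline 0$ in \cref{lem:Freg}), with diagonal entries $\ell(2-\kappa^* a_\ell)$ that vanish exactly on the critical indices $\{\ell:a_\ell=a\}$, i.e.\ on $\Lambda=\{\ell_1,\dots,\ell_k\}$ in case (a) and on $\{l,m,l+m\}$ in case (b). Write $\ell^2=N\oplus R$ with $N=\mathrm{span}\{e_\ell:a_\ell=a\}$ (finite-dimensional) and $R$ the closed span of the remaining coordinates, with $P,Q$ the associated coordinate projections. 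Since the restriction $D_{\underline p}F(\underline 0,\kappa^*)|_R$ is the diagonal operator with entries $\ell(2-\kappa^* a_\ell)$, $\ell\notin\Lambda$, and these are $\asymp\ell$ and bounded away from $0$ (using $\sup_\ell\ell|a_\ell|<\infty$, finiteness of $\Lambda$, and $a_\ell\to0$), it has a bounded inverse $R\cap\ell^2\to R\cap\ell^2_w$; the implicit function theorem applied to $QF(\underline u+\underline v,\kappa)=\underline 0$ then yields a smooth $\underline v=\Psi(\underline u,\kappa)$ with $\Psi(\underline 0,\kappa)=\underline 0$ and, because the linearization has no $N$--$R$ cross terms, $D_{\underline u}\Psi(\underline 0,\cdot)=0$, so $\Psi(\underline u,\kappa)=O(\|\underline u\|^2)$. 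A direct computation from \cref{lem:lemma1}/\cref{eq:F=0} gives the quadratic part: for $\underline u=\sum_r c_re_{\ell_r}$ supported on $\Lambda$, $\Psi_2(\underline u)$ is supported on the ``second generation'' indices, with $[\Psi_2(\underline u)]_{\ell_r+\ell_t}=\tfrac{a}{a-a_{\ell_r+\ell_t}}c_rc_t$ for $r\neq t$, $[\Psi_2(\underline u)]_{2\ell_r}=\tfrac{a}{2(a-a_{2\ell_r})}c_r^2$, and $[\Psi_2(\underline u)]_{\ell_r-\ell_t}=\tfrac{a}{a-a_{\ell_r-\ell_t}}c_rc_t$ for $\ell_r>\ell_t$; here assumptions (i)--(iii) of part (a) guarantee that these indices lie outside $\Lambda$ and that each value is realized by a unique pair.

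Substituting into the kernel projection produces the finite-dimensional \emph{reduced bifurcation equation} $\Phi(\underline u,\kappa):=PF(\underline u+\Psi(\underline u,\kappa),\kappa)=\underline 0$. In case (a), assumption (i) says $\Lambda$ contains neither a sum nor a difference of two of its elements, so the purely-$\underline u$ quadratic part $Q_{\ell_i}(\underline u)$ of $F_{\ell_i}$ vanishes and the leading contribution to $-\kappa Q_{\ell_i}(\underline u+\Psi)$ is the \emph{cubic} cross term $-\kappa^*C_{\ell_i}(\underline u,\Psi_2(\underline u))$, where $C_{\ell_i}$ is the bilinear part of $Q_{\ell_i}$; the crucial combinatorial step is to check, using (ii)--(iii), that every genuinely mixed cubic monomial $c_pc_qc_s$ cancels, leaving only monomials $c_ic_r^2$, whose coefficients collapse exactly to $\ell_iB_{\ell_i\ell_r}$ with $B_{\ell_i\ell_r}$ as in the statement. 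One obtains $\Phi_{\ell_i}(\underline u,\kappa)=\ell_ic_i\bigl[(2-\kappa a)+\sum_{r=1}^kB_{\ell_i\ell_r}c_r^2\bigr]+O\bigl(\|\underline c\|^4+|\kappa-\kappa^*|\,\|\underline c\|^3\bigr)$; the absence of a quadratic term is what will force $\kappa'(0)=0$ (pitchfork). In case (b), the resonance $l+m\in\{l,m,l+m\}$ makes (i) fail, and at $\underline u=c_1e_l+c_2e_m+c_3e_{l+m}$ one computes, using $m<l$, $l\neq2m$ and $a_j<a$ for $j\notin\{l,m,l+m\}$ to rule out all other index pairs, that $Q_l(\underline u)=la\,c_2c_3$, $Q_m(\underline u)=ma\,c_1c_3$, $Q_{l+m}(\underline u)=(l+m)a\,c_1c_2$; hence $\Phi_l=l\bigl[(2-\kappa a)c_1-\kappa a\,c_2c_3\bigr]+O(\text{cubic})$ and cyclically, with the \emph{quadratic} leading nonlinearity producing $\kappa'(0)\neq0$ (transcritical).

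To extract the branches I would blow up and reapply the implicit function theorem. In case (a), put $c_i=sd_i$ and $\kappa=\tfrac2a\bigl(1+\sigma\tfrac{s^2}{2}\bigr)$, so $2-\kappa a=-\sigma s^2$; dividing the $\ell_i$-equation by $\ell_is^3$ turns it into $d_i\bigl(\sum_rB_{\ell_i\ell_r}d_r^2-\sigma\bigr)+O(s)=0$. At $s=0$ this is solved by $d_{i_r}^{(0)}=b_r:=\sqrt{|(\tilde B^{-1}\mathbf 1)_r|}$ for $r=1,\dots,m$ and $d_i^{(0)}=0$ otherwise, with $\sigma=\mathrm{sgn}(\tilde B^{-1}\mathbf 1)$, precisely because $\tilde B\,(b_1^2,\dots,b_m^2)^\top=\sigma\mathbf 1$. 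The $\underline d$-Jacobian at this point is block triangular: the $\{i_1,\dots,i_m\}$-block equals $2\,\mathrm{diag}(\ell_{i_r})\,\tilde B\,\mathrm{diag}(b_r)$, invertible since $\tilde B$ is and $b_r\neq0$, while the complementary block is diagonal with entries $\ell_i\bigl(\sum_rB_{\ell_i\ell_r}(b_r^{(0)})^2-\sigma\bigr)$, nonzero under a mild transversality condition (vacuous when $m=k$). The implicit function theorem in $s$ then produces the smooth branch; unwinding, and noting the $\Psi$-contribution and the $i\notin\{i_r\}$ modes are $O(s^2)$, gives $\underline p(s)=s\sum_rb_re_{\ell_{i_r}}+O(s^2)$, $\underline p'(s)=\sum_rb_re_{\ell_{i_r}}+O(s)$, $\kappa(s)=\tfrac2a\bigl(1+\sigma\tfrac{s^2}{2}\bigr)$, which is pitchfork, supercritical iff $\sigma=+1$. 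In case (b), put $c_i=sq_i$ and $\kappa=\tfrac2a(1+s\nu)$; dividing the three equations by $\ell_\bullet s^2$ and using $\sigma_1\sigma_2\sigma_3=1$ shows the $O(1)$ part forces $\nu q_1=-q_2q_3$, $\nu q_2=-q_1q_3$, $\nu q_3=-q_1q_2$, hence $\nu(0)=-1$ and $q_i(0)=\sigma_i$ for each of the four sign patterns with $\sigma_1\sigma_2\sigma_3=1$; fixing $q_1=\sigma_1$ and applying the implicit function theorem in $s$ to solve for $(q_2,q_3,\nu)$ — the $3\times3$ Jacobian has determinant $\pm4\neq0$ — gives the transcritical branch $\underline p^{\sigma}(s)=s(\sigma_1e_l+\sigma_2e_m+\sigma_3e_{l+m})+O(s^2)$ with $\kappa'(0)\neq0$, and a harmless reparametrization of $s$ turns $\kappa(s)=\tfrac2a(1+s\nu(s))$ into $\tfrac2a(1-s)$.

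The main obstacle is the combinatorial bookkeeping in the second step of case (a): one must enumerate, across the several types of index relations contributing to $C_{\ell_i}(\underline u,\Psi_2(\underline u))$, exactly which pairs survive after imposing (i)--(iii), and verify that the surviving coefficients reassemble into the explicit matrix $B_{\ell_i\ell_r}$ — with its separate formulas for $i<j$, $i>j$ and $i=j$, the last one reproducing the ratio $(a-2a_{2\ell_i})/(a-a_{2\ell_i})$ already seen in \cref{thm:CRthm}. A secondary, purely technical, point is keeping the functional-analytic bookkeeping honest: that $D_{\underline v}[QF](\underline 0,\kappa^*)$ is boundedly invertible as a map $R\cap\ell^2_w\to R\cap\ell^2$, and that every implicit-function-theorem application takes place in the correct Banach spaces — this is precisely where $\sup_\ell\ell|a_\ell|<\infty$ and the finiteness of $\Lambda$ enter. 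Everything else is bounded-order Taylor expansion, already licensed by the smoothness of $F$ from \cref{lem:Freg}.
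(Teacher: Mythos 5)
Your overall architecture is the same as the paper's: Lyapunov--Schmidt reduction using the diagonal structure of $D_{\underline p}F(\underline 0,2/a)$, explicit computation of the slaved (second-generation) modes, reassembly of the cubic coefficients into the matrix $B$, then a blow-up $c=s\,d$, $\kappa=\tfrac2a(1+\sigma s^2/2)$ and the implicit function theorem. Your $\Psi_2$ formulas and the bookkeeping showing that assumptions (i)--(iii) kill the quadratic terms and all genuinely mixed cubic monomials are correct and reproduce exactly the paper's reduced equation (its \cref{lem:redLS}), and your treatment of case (b) — quadratic leading terms $la\,c_2c_3$ etc., the four sign patterns with $\sigma_1\sigma_2\sigma_3=1$, a normalization to remove the scaling degeneracy, and an invertible $3\times3$ Jacobian — matches the paper's argument up to an equivalent parametrization.

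The genuine gap is in case (a) when $m<k$. You keep all $k$ kernel amplitudes $d_1,\dots,d_k$ as unknowns and apply the implicit function theorem to the full $k$-dimensional rescaled system; the Jacobian at the base point is block triangular, and its non-selected block is diagonal with entries proportional to $\sum_{r} B_{\ell_i\ell_{i_r}}b_r^2-\sigma$ for $i\notin\{i_1,\dots,i_m\}$. You need these to be nonzero, and you yourself flag this as a ``mild transversality condition'' — but the theorem does not assume it, and the case $m<k$ is precisely the one used in the applications (e.g.\ \cref{rem:high}(i) takes $1\times1$ submatrices $\tilde B$). So as written your argument proves a weaker statement. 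The paper's proof avoids this by a different device: after the rescaling it writes the reduced equations in the factored form $\Phi_\ell=\mu^{3/2}\tilde u_\ell\,G_\ell(\underline{\hat u},\mu)$ with $\hat u_j=\tilde u_j^2$, sets the non-selected kernel amplitudes identically to zero (so that those equations are satisfied through the prefactor $\tilde u_\ell$, not through $G_\ell$), and applies the implicit function theorem only to the $m$-dimensional projected system $G^p(\underline{\hat u}^p,\mu)=0$, whose Jacobian at $(\underline y^p,0)$ is exactly $\tilde B$ — no condition on the non-selected rows is needed. Repairing your proof therefore requires this additional structural observation about the reduced system (that one may consistently suppress the non-selected modes), not merely a cosmetic change; simply restricting your unknowns to the selected amplitudes leaves the non-selected reduced equations unverified. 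A very minor further point: after dividing the $\ell_i$-equation by $\ell_i s^3$, the selected block of your Jacobian should read $2\,\mathrm{diag}(b_r)\,\tilde B\,\mathrm{diag}(b_r)$ rather than $2\,\mathrm{diag}(\ell_{i_r})\,\tilde B\,\mathrm{diag}(b_r)$; this does not affect the invertibility conclusion.
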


\begin{remark}\label{rem:high}
In the setup of \cref{thm:high}(a), we make the following observations.

    (i) For $\ell_i \in \Lambda$ such that $B_{\ell_i\ell_i} \neq 0$, we can take $\tilde{B}$ to be the corresponding $1 \times 1$ submatrix and obtain non-trivial unimodal bifurcations (locally $\ell_i$ gives the only dominant mode in the Fourier expansion of $p$) for such $\ell_i$, around $\kappa = 2/a$, as given by \cref{thm:CRthm}.

    (ii) If $B_{\ell_{i_r}\ell_{i_r}}$ are non-zero and of the same sign for some subcollection of indices $\{i_1,\dots, i_m\} \subset \{1,\dots,k\}$ and the off-diagonal terms $B_{\ell_{i_r}\ell_{i_t}}, r \neq t$ are sufficiently small compared to the diagonal terms, \cref{thm:high}(a) furnishes a mixed branch of non-trivial solutions where $\{\ell_{i_1}, \dots, \ell_{i_m}\}$ all correspond to (local) dominant modes.

    (iii) Suppose $\ell,m \in \Lambda$ such that $B_{\ell\ell}>0, B_{mm}>0, B_{\ell m}<0, B_{m\ell}<0$ and $B_{\ell\ell}B_{mm} - B_{\ell m}B_{m\ell}<0$. Then, the $2 \times 2$ submatrix $\tilde{B}$ constructed from $\ell,m$ satisfies $\tilde{B}^{-1}\mathbf{1}<0$ coordinate-wise. This exhibits instances where there are multiple Fourier modes that all correspond to supercritical (subcritical) unimodal bifurcations but jointly produce a mixed subcritical (supercritical) bifurcation around $2/a$.

    The resonance condition treated in \cref{thm:high}(b) generically produces the only situation with a transcritical bifurcation. More precisely, for the quadratic contribution to survive in the `reduced' Lyapunov-Schmidt system (see \cref{subsection:LSdesc} and \cref{lem:resLS}), which produces a transcritical bifurcation, we must necessarily have resonance, namely, $l,m \in \mathbb{N}$, with $m < l$ and $l \neq 2m$, such that $a_l = a_m = a_{l+m}$. A relaxation of this resonance condition is given as a sufficient condition for a discontinuous phase transition in \citet[Section 5.1]{carrillo2020long}. In \cref{sec:dpt}, we show that a branch of non-trivial solutions with $\kappa$ values less than $2/a$ is sufficient to cause such phenomena. Resonance is thus `not specially related' to discontinuous phase transitions, however, resonance is indeed the special structure behind transcritical bifurcations.
\end{remark}

\begin{remark}[\textbf{Metastability and higher order bifurcations}]\label{rem:metahigh}
    Although \cref{thm:high} gives multi-mode bifurcating solutions only when multiple Fourier coefficients of $W$ agree, at an intuitive level, it also gives useful information when $\{a_\ell : \ell \in \Lambda\}$ are sufficiently close to each other for some finite set $\Lambda$. If $\Lambda$ satisfies the hypotheses of \cref{thm:high}(a), then for any $\tilde{\Lambda} \subseteq \Lambda$, one can compute the associated matrix \(\tilde B\) and
  evaluate \(\tilde B^{-1}\mathbf1\), provided $\tilde B$ is invertible; if the entries of
  \(\tilde B^{-1}\mathbf1\) are nonzero and have the same sign, `approximate' stationary distributions can be computed using the explicit multi-mode pitchfork bifurcating solutions given in \cref{thm:high}(a). This approximate stationarity implies that the solution to the McKean-Vlasov equation \eqref{eq:mveq} started from such a bifurcating solution remains relatively unchanged over long periods of time, which can be viewed as a form of metastability. A similar analysis can be conducted in the setting of \cref{thm:high}(b). Thus, \cref{thm:high} provides a mechanism for obtaining a rich collection of such metastable shapes in the presence of clustering of the Fourier coefficients of $W$. See also \cref{cor:dpt} for a connection between such clustering and discontinuous phase transitions.
\end{remark}

\subsubsection{Quantifying clustering and metastability in Noisy Transformer for large $\beta$} \label{rem:mettran}
Now we apply the above observations to the Noisy Transformer in the large $\beta$ setting.

\textbf{Clustering of Fourier modes of $W_\beta$: }Recall for the Noisy
Transformer coefficients
\[
a_\ell(\beta)=\frac{2I_\ell(\beta)}{\beta},\qquad
\kappa^*_\ell(\beta)=\frac{\beta}{I_\ell(\beta)}.
\]
\medskip
Fix integers $L \in \mathbb{N}$. Using~\cref{largebetaIbeta}, one obtains for $1 \le \ell \le L$,
\begin{align}\label{aellbetalargebeta}
a_\ell(\beta)
=\frac{2e^\beta}{\beta^{3/2}\sqrt{2\pi}}\Big(1 - \frac{4\ell^2-1}{8\beta} + O(\beta^{-2})\Big).
\end{align}
Consequently the absolute difference between any two coefficients $a_{l_1}, a_{l_2}$ for $1 \le \ell_1 < \ell_2 \le L$ satisfies, to leading order,
\[
\begin{aligned}
a_{\ell_1}(\beta)-a_{\ell_2}(\beta)
&\approx \frac{e^\beta}{\sqrt{2\pi}\,\beta^{5/2}}\,(\ell_2^2-\ell_1^2),
\end{aligned}
\]
while a representative magnitude is
\[
a_\ell(\beta)\approx \frac{2e^\beta}{\beta^{3/2}\sqrt{2\pi}}, \quad 1 \le \ell \le L.
\]
Therefore the \emph{relative} difference is
\begin{align}\label{eq:frac_diff} 
\frac{a_{\ell_1}(\beta)-a_{\ell_2}(\beta)}{a_\ell(\beta)}
\approx \frac{\ell_2^2-\ell_1^2}{2\beta}\quad(\beta\to\infty). 
\end{align}
In particular, for any tolerance \(\varepsilon>0\), \(a_{\ell_1}(\beta)\) and \(a_{\ell_1}(\beta)\) are \(\varepsilon\)-close for any $1 \le \ell_1 < \ell_2 \le L$
(relative to the common scale) provided
\[
\beta \gtrsim \frac{L^2}{2\varepsilon}.
\]
This quantifies the clustering of the Fourier coefficients \(\{a_{l}(\beta) : 1 \le \ell \le L\}\) in relative scale
as \(\beta\to\infty\) which provides a measure for how close to stationarity the multimodal bifurcations described in \cref{thm:high} are. Moreover, for large \(\beta\) the bifurcation thresholds satisfy the rapid-decay estimate
\[
\kappa^*_\ell(\beta)=\frac{\beta}{I_\ell(\beta)} \sim \sqrt{2\pi}\,\beta^{3/2}e^{-\beta},
\qquad \beta\to\infty.
\]
In particular, the bifurcation points accumulate exponentially fast as $\beta \to \infty$.

\medskip
\textbf{Metastability and higher order bifurcations:} If
  \(\beta \gtrsim L^2/(2\varepsilon)\), the relative closeness of
  \(\{a_{\ell}(\beta) : 1 \le \ell \le L\}\) makes the higher order bifurcation
  mechanism in~\cref{thm:high}(a) quantitatively relevant for $\Lambda = \{1,\dots, L\}$ as described in \cref{rem:metahigh}. 
  The resulting broad class of `approximate' stationary distributions provides a notion of metastability for the Noisy Transformer analogous to the noiseless case observed in \cite{geshkovski2024dynamic}.





\medskip

\textbf{Explicit \(k=2\) asymptotics for large \(\beta\): }In the context of \cref{rem:mettran}, take $\beta \ge L^2/(2\varepsilon)$ and let $\tilde{\Lambda} = \{\ell_1, \ell_2\}$. Using~\cref{largebetaIbeta} to approximate the entries of \(\tilde B\), we obtain
\begin{equation}\label{eq:Blarge}
\begin{aligned}
\tilde B_{\ell\ell}&=-\frac{2\beta}{3\ell^2}+O(1), \quad \ell=\ell_1,\ell_2,\\
\tilde B_{\ell_1 \ell_2}&=\frac{8\beta}{4\ell_1^2-\ell_2^2}+O(1),\\[4pt]
\tilde B_{\ell_2 \ell_1}&=-\frac{8\beta}{4\ell_2^2-\ell_1^2}+O(1).
\end{aligned}
\end{equation}
Consequently,
\[
\det \tilde B = C_{\tilde B}(\ell_1,\ell_2)\,\beta^2 + O(\beta),
\]
for an explicit rational coefficient \(C_{\tilde B}(\ell_1,\ell_2)\). Hence, the solution vector
\(x=\tilde B^{-1}\mathbf1\) admits the expansion
\[
x_i(\beta)=\frac{k_i(\ell_1,\ell_2)}{\beta}+O(\beta^{-2}),\qquad i=1,2,
\]
where the rational constants \(k_1,k_2\) are
\[
\begin{aligned}
k_1(\ell_1,\ell_2)
&=\frac{-12 \ell_1^6 + 15 \ell_1^4 \ell_2^2 + 132 \ell_1^2 \ell_2^4}
{8 \ell_1^4 + 254 \ell_1^2 \ell_2^2 + 8 \ell_2^4},\\[6pt]
k_2(\ell_1,\ell_2)
&=\frac{132 \ell_1^4 \ell_2^2 + 15 \ell_1^2 \ell_2^4 - 12 \ell_2^6}
{8 \ell_1^4 + 254 \ell_1^2 \ell_2^2 + 8 \ell_2^4}.
\end{aligned}
\]
Therefore the modal weights, namely, the weights assigned to the Fourier modes in the bifurcating solution, satisfy the asymptotic law
\[
b_i(\beta)=\sqrt{|x_i(\beta)|}=\frac{\sqrt{|k_i(\ell_1,\ell_2)|}}{\sqrt{\beta}}+O(\beta^{-1})
\qquad(\beta\to\infty).
\]
The sign \(\sigma\in\{+1,-1\}\), which determines super- versus subcriticality of the (approximate) pitchfork bifurcating solution, equals the common sign of \(x_1,x_2\) (for large $\beta$, of the \(k_i\) at leading order); if \(k_1\) and \(k_2\) have opposite signs then the two-mode pitchfork with same-signed weights does not occur. 

\medskip

\subsection{Stationary density representation in the supercritical bifurcation regime}
Now, we give a much more detailed description of non-trivial bifurcating solutions of \eqref{eq:stat} for a large class of potentials $W$ which contains all $W \in H^1_s(\mathbb{S}^1)$, treated in \cite{carrillo2020long}, in addition to a broad class of more singular potentials (see \cref{rem:singpot}). This description provides \emph{higher order approximations at the exponential level}. As a test case, this is shown to `almost match' the explicitly solvable stationary solutions for the \emph{Kuramoto model} (see \cref{rem:Kur}).


Before proceeding, recall that we defined $p_0\coloneqq \text{Unif}(\mathbb{S}^1)$. For $\delta >0$, we let $\mathbb{B}_{\delta}(p_0,H_s^1(\mathbb{S}^1))$ denote the $\delta-$ball in $H_s^1(\mathbb{S}^1)$ around $p_0$. Also recall $\kappa_m = 2/a_m$.

\begin{theorem}\label{thm:main_thm}
Assume that $\sup_{\ell \ge 1} \ell |a_\ell|< \infty$. Suppose that there exists $m \in \mathbb{N}$ such that $a_m>0$, $\sum_{\ell=1}^\infty |a_{\ell m}| < \infty$ and $a_{\ell m} \neq a_{m}$ for all $\ell >1$. Then the following hold. If  the $m$-signature of $W$ is strictly positive, i.e., $R_m(W) = {(a_m - 2a_{2m})}/{(a_m - a_{2m}})>0$, there exists $\delta_m >0$ such that for $\kappa \in (\kappa_m, \kappa_m+\delta_m]$ there are two non-trivial solutions to \eqref{eq:stat} in $\mathbb{B}_{\delta_m}(p_0,H_s^1(\mathbb{S}^1))$ given by
    \begin{align}\label{eq:densitybifur}
        p_\kappa^{\pm}(\theta) \coloneqq \frac{1}{Z} \exp \bigg( \kappa \sum_{\ell=1}^\infty s^{\pm}(\kappa)^\ell  (z_{\ell m}(\kappa)+r_{\ell m}(\kappa)) a_{\ell m} \cos \ell m\theta \bigg),
    \end{align}
    where:
    \begin{itemize}
        \item $Z$ is the normalizing factor,
        \item $\{z_{\ell m}(\kappa): \ell \geq 1 \}$ satisfy the following recursive convolution equations:
    \begin{align}\label{eq:zseq}
        z_m=1, \quad \ell m (2-\kappa a_{\ell m}) z_{\ell m}(\kappa) = \kappa \sum_{j=1}^{\ell-1} j m\, a_{jm}\,z_{jm}(\kappa)\,z_{(\ell-j)m}(\kappa), \, \ell \geq 2,
    \end{align}
    \item $r_{\ell m}(\kappa) = O(|\kappa a_m-2|)$,
    \item and
        \begin{align*}
        s^{\pm}(\kappa) \coloneqq \pm \sqrt{\frac{2(\kappa a_m-2)(2-\kappa a_{2m})}{\kappa^2 a_m (a_m - 2a_{2m)}}} + O(|\kappa a_m-2|^{3/2}).
    \end{align*}
    In particular, this corresponds to a supercritical branch of non-trivial bifurcating solutions.
    \end{itemize}

    The same representation holds when $R_m(W)<0$, but now for $\kappa \in [\kappa_m-\delta_m, \kappa_m)$ (a subcritical branch). Moreover, if $a_\ell \neq a_m$ for all $\ell \neq m$, these are the only possible non-trivial solutions in $\mathbb{B}_{\delta_m}(p_0,H_s^1(\mathbb{S}^1)) \times [\kappa_m-\delta_m, \kappa_m + \delta_m]$.
 \end{theorem}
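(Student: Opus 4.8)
The plan is to carry out a Lyapunov-Schmidt reduction for the map $F$ from \cref{lem:lemma1}, exploiting two structural features: (i) by \cref{thm:period}(a), the branch bifurcating at $\kappa_m = 2/a_m$ is $m$-periodic, so we may restrict attention to the sub-sequence $(p_{\ell m})_{\ell \ge 1}$ and, after relabeling $a_{\ell m} \rightsquigarrow a_\ell$, work with the potential $W^{(m)}$; this reduces the problem to the case $m=1$, which I treat from now on. (ii) At $\kappa^* = \kappa_1$ the linearization $D_{\underline p}F(\underline 0, \kappa^*)$ has one-dimensional kernel spanned by $e_1$ (this is exactly the nondegeneracy hypothesis $a_\ell \neq a_1$ for $\ell > 1$, used already in \cref{thm:CRthm}), and its range is the closed complement $\{h : h_1 = 0\}$. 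First I would split $\underline p = s e_1 + \underline\psi$ with $\underline\psi \in \ker^\perp$ and solve the infinite system of equations $F_\ell = 0$, $\ell \ge 2$, for $\underline\psi = \underline\psi(s,\kappa)$ via the implicit function theorem on $\ell^2_w$ (licensed by the smoothness in \cref{lem:Freg}), obtaining $\psi_\ell = O(s^2)$ uniformly for $\kappa$ near $\kappa^*$; the bifurcation equation is then the single scalar equation $F_1(s e_1 + \underline\psi(s,\kappa), \kappa) = 0$.

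The second and more quantitative step is to extract the precise shape of $\underline\psi$ and of the bifurcation curve. I would make the scaling ansatz $p_{\ell}(\kappa) = s(\kappa)^{\ell}(z_\ell(\kappa) + r_\ell(\kappa))$ with $z_1 = 1$, plug it into \cref{eq:Fid}, and match the leading power of $s$ in each coordinate $\ell \ge 2$: the quadratic terms of $F_\ell$ involving $p_j p_{\ell - j}$ with $j + (\ell - j) = \ell$ contribute at order $s^\ell$, while all other products contribute at strictly higher order because the higher Fourier modes are $o(s^\ell)$; collecting the $s^\ell$ terms gives exactly the recursive convolution identity \cref{eq:zseq} defining $z_\ell(\kappa)$ (note $2 - \kappa a_{\ell} \neq 0$ for $\ell \ge 2$ and $\kappa$ near $\kappa^*$, since $a_\ell \neq a_1$, so the recursion is well-posed), and the remainder $r_\ell(\kappa)$ absorbs the higher-order corrections, with $r_\ell(\kappa) = O(|\kappa a_1 - 2|)$ following from the implicit-function estimate once one tracks that the deviation of $\underline\psi$ from its leading term is controlled by the distance $\kappa - \kappa^*$. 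For the $\ell = 1$ equation, $F_1 = 0$ reads $(2 - \kappa a_1) p_1 = \kappa \sum_{j>1}(j a_j - (j-1)a_{j-1}) p_j p_{j-1}$; substituting the ansatz, the dominant term on the right is the $j = 2$ term, of order $s^3$, giving $(2 - \kappa a_1) = \kappa(2a_2 - a_1) s^2 z_2 + O(s^3)$ with $z_2 = \kappa a_1/(2(2 - \kappa a_{2}))$ from \cref{eq:zseq}; solving for $s^2$ yields the stated formula for $s^\pm(\kappa)$, and its sign dictates that real solutions exist for $\kappa > \kappa_1$ precisely when $R_1(W) = (a_1 - 2a_2)/(a_1 - a_2) > 0$ (supercritical) and for $\kappa < \kappa_1$ when $R_1(W) < 0$ (subcritical). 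Feeding $\underline p$ back into the fixed-point formula from \cref{lem:lemma1} gives the exponential representation \cref{eq:densitybifur}, and membership in $\mathbb{B}_{\delta_m}(p_0, H^1_s)$ follows from $\underline p = O(s) = O(\sqrt{|\kappa a_m - 2|})$ in $\ell^2_w$ together with the requirement $\sum_\ell |a_{\ell m}| < \infty$, which guarantees the exponent is a bounded continuous function. Finally, uniqueness in the stated neighborhood when $a_\ell \ne a_1$ for all $\ell \ne 1$ is inherited from the uniqueness clause of \cref{thm:CRthm} (equivalently, from uniqueness of the solution of the bifurcation equation near $s = 0$).

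The main obstacle I anticipate is making the remainder bookkeeping rigorous and uniform: one must show that the two-scale decomposition $p_\ell = s^\ell(z_\ell + r_\ell)$ is internally consistent — in particular that $r_\ell(\kappa)$ is genuinely $O(|\kappa a_m - 2|)$ with constants summable in $\ell$ so that the series in the exponent of \cref{eq:densitybifur} converges in $H^1_s$ — and that the higher Fourier modes are $o(s^\ell)$ as claimed rather than merely $O(s^\ell)$. This requires a careful a priori estimate on the tail $\sum_{\ell \ge L}(1 + \ell^2) p_\ell^2$, using the weighted-$\ell^2$ mapping properties of $F$ from \cref{lem:Freg} and the spectral gap $\inf_{\ell \ge 2}|2 - \kappa a_\ell| > 0$; the summability hypothesis $\sum_\ell |a_{\ell m}| < \infty$ (strictly stronger than $W \in H^1_s$) is exactly what is needed to close this estimate and to pass from the sequence-space solution back to a genuine density via the fixed-point equation.
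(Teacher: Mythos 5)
Your plan follows essentially the same route as the paper's proof: reduce to the $m$-periodic branch via \cref{thm:period}(a), work with the quadratic Fourier system from \cref{lem:lemma1}, extract $s^{\pm}(\kappa)$ from the $\ell=m$ and $\ell=2m$ equations using $z_{2m}=\kappa a_m/(2(2-\kappa a_{2m}))$, obtain \eqref{eq:zseq} by matching the order-$s^\ell$ terms, and inherit uniqueness from \cref{thm:CRthm}. The one step you defer as an ``anticipated obstacle''---showing the modes genuinely scale like $|\chi_{\ell m}|=O(|\chi_m|^{\ell})$ with controlled constants and that $|\chi_{\ell m}-\chi_m^{\ell}z_{\ell m}|=O(|\chi_m|^{\ell+2})$---is precisely what the paper supplies through the inductive geometric bound of \cref{lem:lemma3} (using $\sum_\ell |a_{\ell m}|<\infty$ and the spectral gap $\inf_{\ell\ge 2}|2-\kappa a_{\ell m}|>0$) and a second induction in the proof of \cref{thm:main_thm}, so your outline is sound and matches the paper's argument.
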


\begin{remark}\label{rem:Kur}
    For the Kuramoto model corresponding to $W(\theta) = \cos\theta$, the above results naturally characterize the stationary behavior around the first and only bifurcation point at $\kappa_1=2$. The associated  $z_\ell$'s are given by $z_1=1$ and for $\ell\geq 2$, $2\ell z_\ell = \kappa  z_{\ell-1}$, which gives $z_\ell = (\kappa/2)^\ell/\ell!$, and $s^{\pm}(\kappa) = \pm 2\sqrt{\frac{\kappa-2}{\kappa^2}}$. Thus, the Fourier coefficients of $p^{\pm}_\kappa$ are given by
    \begin{align}\label{chilexp}
    \chi_\ell = \bigg( \pm 2 \sqrt{\frac{\kappa-2}{\kappa^2}}\bigg)^\ell \bigg((\frac{\kappa}{2})^\ell \frac{1}{\ell!} + O(\kappa-2)\bigg).
    \end{align}
    We now show that this matches the Fourier coefficients of the explicitly known density in this case. For the Kuramoto model, it is known (see, for example, \cite{bertini2010dynamical} and \citet[Section 6.1]{carrillo2020long}) that the uniform distribution is the only stationary distribution for $\kappa \le 2$. $\kappa=2$ is the unique bifurcation point and the unique (up to rotation) non-trivial branch for $\kappa>2$ is given by
    $$
    \pi^{\text{Kur}}(\theta) := \frac{1}{I_0(\kappa a)}\exp\left(\kappa  a\cos \theta\right), \quad \theta \in [0, 2\pi),
    $$
    where $a$ is the unique positive solution to
    $$
    a = \frac{I_1(\kappa a)}{I_0(\kappa a)}.
    $$
    The Fourier coefficients of $\pi^{\text{Kur}}$ are given by $\chi^{\text{Kur}}_\ell = I_\ell(\kappa a)/I_0(\kappa a)$. Note that we have by Taylor expansion
    \begin{align*}
      I_\ell(x) = \frac{1}{\ell!}(\frac{x}{2})^\ell + \frac{1}{(\ell+1)!}  (\frac{x}{2})^{\ell+2} + O(|x|^{\ell+4}).
    \end{align*}
  For $\kappa \in [2, 2+ \delta)$, for sufficiently small $\delta>0$, $a$ is small and by the above Taylor expansion, we see that $a$ satisfies
  $$
  a = \frac{\kappa a/2 + \frac{1}{2}(\kappa a/2)^3 + O(|a|^5)}{1 + (\kappa a/2)^2 + O(|a|^4)},
  $$
which implies two non-zero solutions for $a$ given by
$$
a = \pm 2 \sqrt{ \frac{\kappa -2}{\kappa^2}} + O(\kappa - 2).
$$
Substituting this above, we conclude
$$
\chi^{\text{Kur}}_\ell = I_\ell(\kappa a)/I_0(\kappa a) = \bigg( \pm 2 \sqrt{\frac{\kappa-2}{\kappa^2}}\bigg)^\ell \bigg((\frac{\kappa}{2})^\ell \frac{1}{\ell!} + O(\kappa-2)\bigg),
$$
which agrees with our expression of $\chi_\ell$ in \eqref{chilexp}.
\end{remark}

\subsubsection{Application to the noisy mean-field Transformers}\label{rem:transformer_explicit}

For the Noisy Transformer model the coefficients are $
a_\ell=\frac{2I_\ell(\beta)}{\beta}$.  For any \(\beta>0\) the sequence \((a_\ell)_{\ell\ge1}\) is positive, strictly decreasing in \(\ell\), and summable:
$$
    \sum_{\ell=1}^\infty a_\ell = W_\beta(0) = \beta^{-1}(e^\beta - 1) < \infty.
$$
So, the hypotheses of \cref{thm:main_thm} are satisfied.~\cref{fig:denvis} provides a visualization of the density in~\cref{eq:densitybifur} for small and large $\beta$ values. A more interactive visualization is available \href{https://claude.ai/public/artifacts/e60b4806-a5eb-410f-812d-ada834f46120}{here}. We can also provide an explicit description of a two-term approximation of~\cref{eq:densitybifur}, as follows, with detailed justification in~\cref{sec:adddetails}:

    \begin{itemize}
        \item Small-$\beta$ regime:
        \[
p_\kappa^{\pm}(\theta)\approx\frac{1}{Z}\exp\Bigg(\pm 2\sqrt{\frac{\beta^{m-1}}{2^{\,m-1}m!}\,(\kappa-\kappa_m)}\cos(m\theta)
+O\!\big(\beta^{2m-1}(\kappa-\kappa_m)\big)\Bigg).
\]
\item Large-$\beta$ regime, letting $\delta\coloneqq 2-\kappa a_m$:
\[
p_\kappa^{\pm}(\theta)
\approx
\frac{1}{Z}\exp\Bigg(
\pm 2\sqrt{\frac{3m^2}{2\beta}\,\delta}\,\cos(m\theta)
+\delta\,\cos(2m\theta)
\Bigg)
+ O(\delta^{3/2}).
\]
\end{itemize}

Finally, in the large-$\beta$ regime, the bifurcation point itself satisfies
\[
\kappa_m=\frac{2}{a_m}
\sim \frac{2}{C(\beta)}
=\beta^{3/2}\sqrt{2\pi}\,e^{-\beta}\big(1+O(\beta^{-1})\big),
\]
which decays exponentially fast in $\beta$. Thus, in the large-$\beta$ regime, the bifurcation is subcritical, with a primary cosine mode of frequency $m$ and amplitude $\sim\sqrt{\delta/\beta}$, and a quadratic second-harmonic contribution of order $\delta$.

\begin{figure}[t]
  \centering

  \begin{subfigure}[b]{\linewidth}
    \centering
    \includegraphics[width=0.7\linewidth,height=3.2cm]{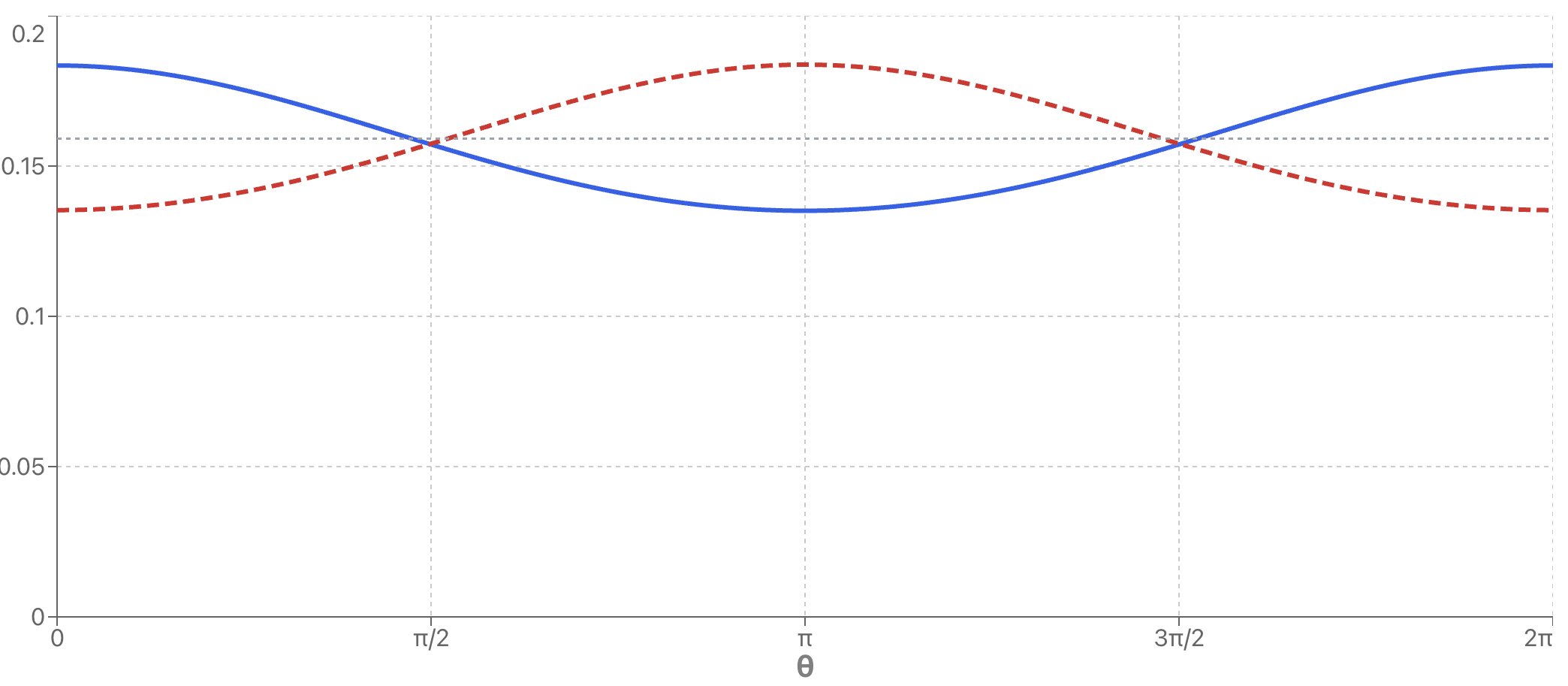}
    \caption{$\beta=0.5$ and  $\kappa=2.0006$}
    \label{fig:1}
  \end{subfigure}


  \begin{subfigure}[b]{\linewidth}
    \centering
    \includegraphics[width=0.72\linewidth,height=3.2cm]{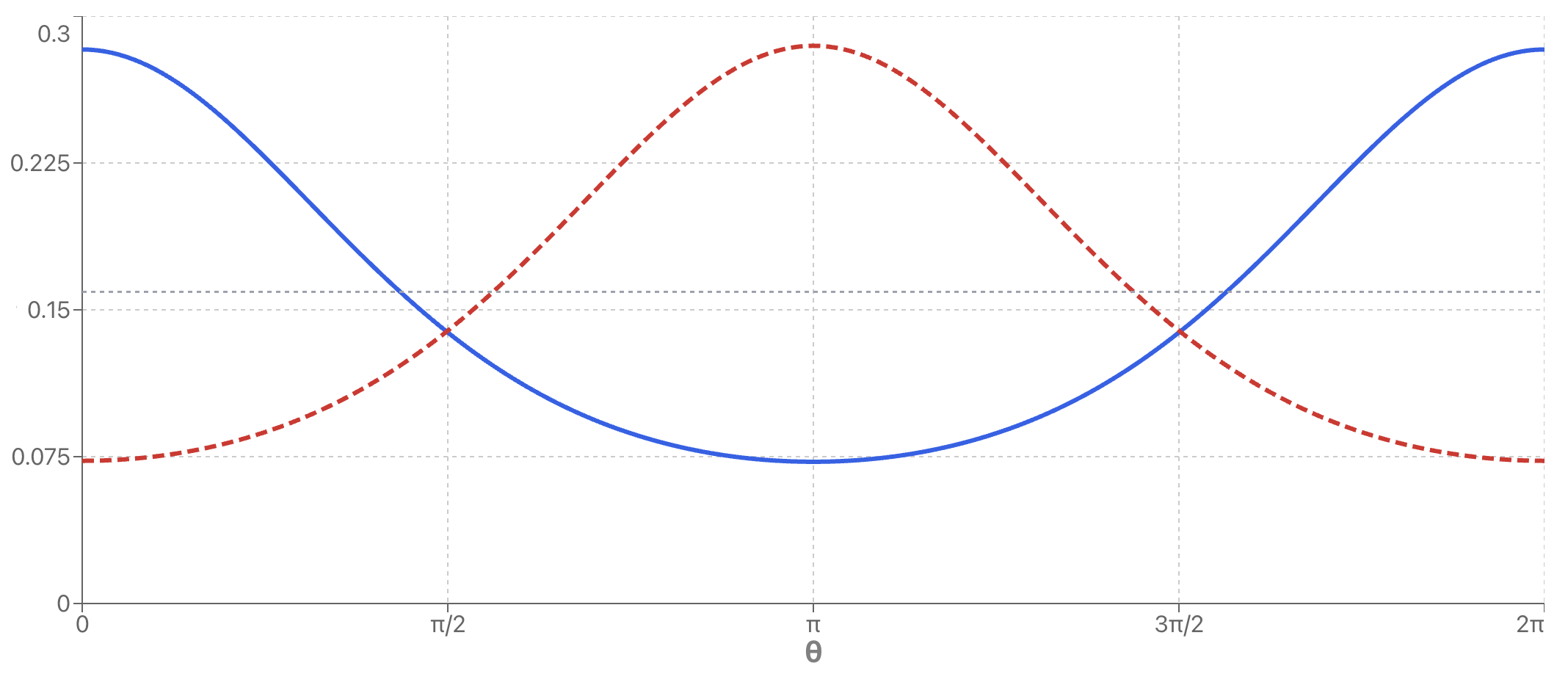}
    \caption{$\beta=0.5$, and $\kappa=2.0936$}
    \label{fig:2}
  \end{subfigure}


  \begin{subfigure}[b]{\linewidth}
    \centering
    \includegraphics[width=0.72\linewidth,height=3.2cm]{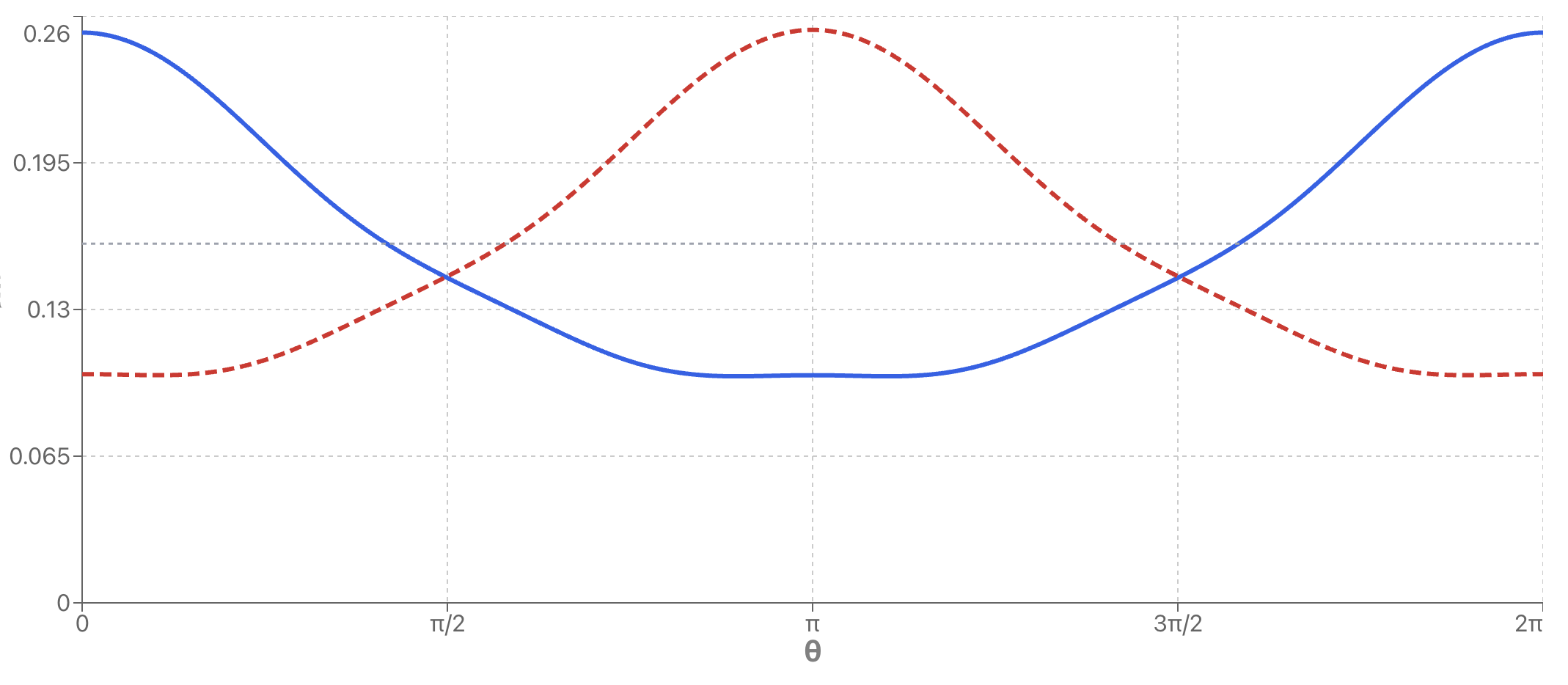}
    \caption{$\beta=4.1$, and $\kappa=3.4579$}
    \label{fig:3}
  \end{subfigure}


  \begin{subfigure}[b]{\linewidth}
    \centering
    \includegraphics[width=0.70\linewidth,height=3.2cm]{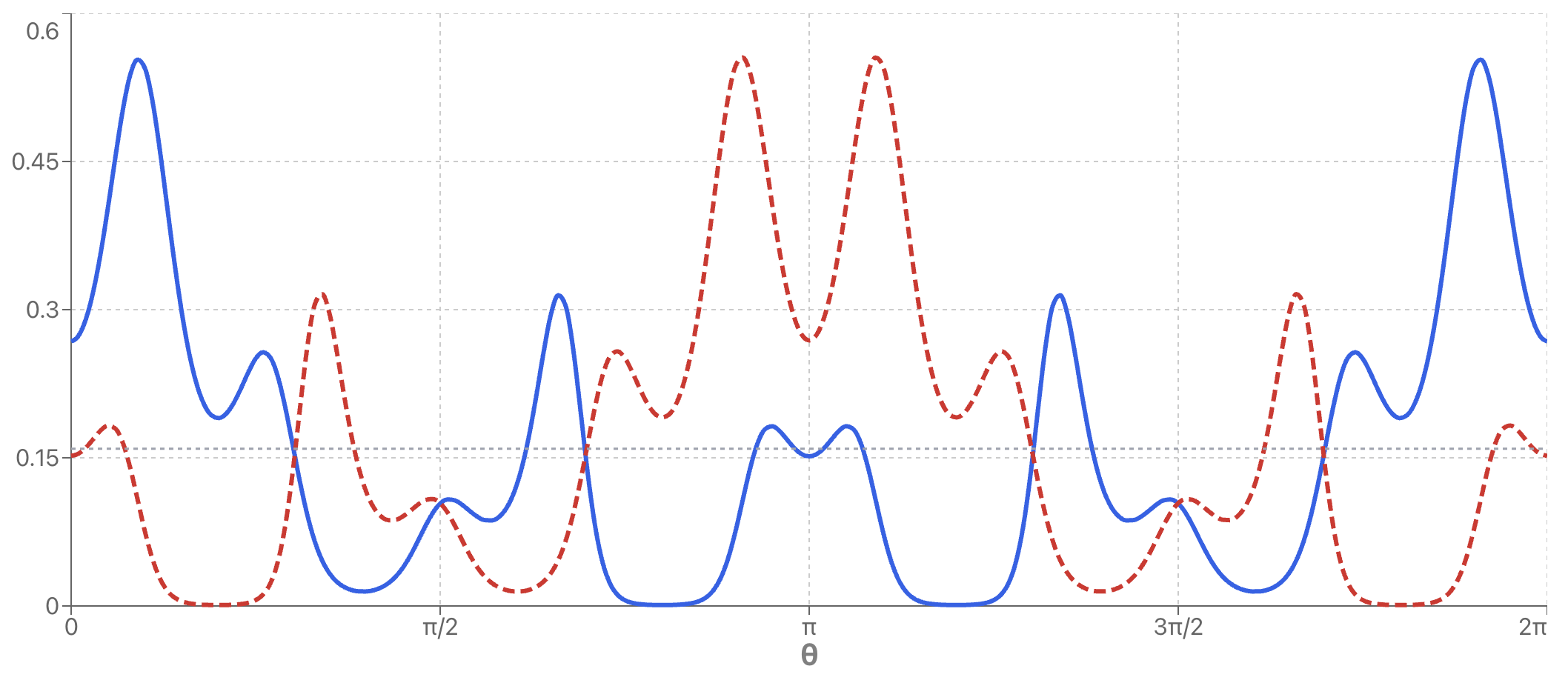}
    \caption{$\beta=4.1$, and $\kappa=3.4929$}
    \label{fig:4}
  \end{subfigure}


  \begin{subfigure}[b]{\linewidth}
    \hspace{1.05in}\includegraphics[width=0.7\linewidth,height=3.2cm]{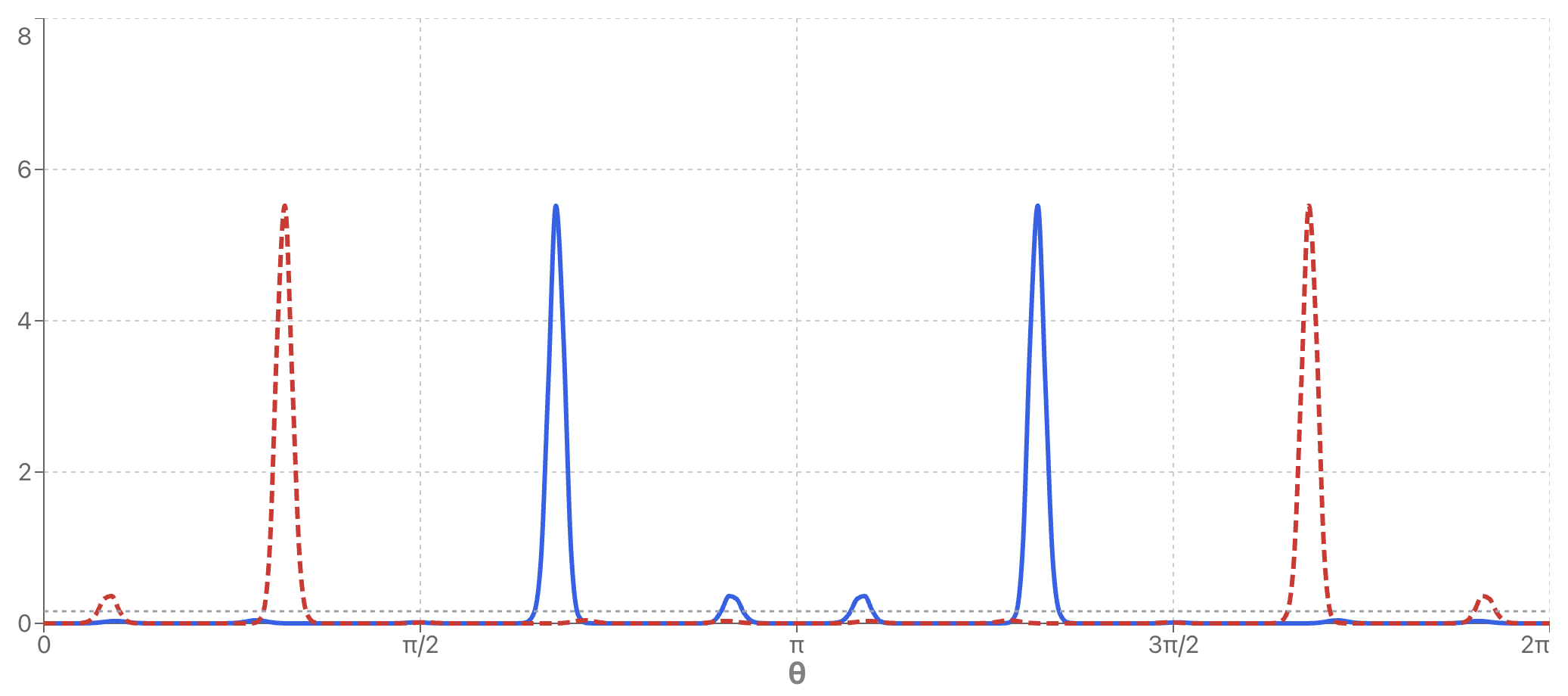}
    \caption{$\beta=4.1$, and $\kappa=3.5129$}
    \label{fig:5}
  \end{subfigure}

  \caption{Visualization of the density in~\cref{eq:densitybifur} for a few \emph{representative}  small and large $\beta$ and $\kappa$ values, for $m=1$, computed numerically by truncating the infinite series at 10. Solid and dashed densities correspond to $p^+_\kappa$ and $p^{-}_\kappa$ respectively. }
  \label{fig:denvis}
\end{figure}


\subsection{Bifurcations and discontinuous phase transition}\label{sec:dpt}
With the advent of Optimal Transport, the analysis of the McKean-Vlasov equation has benefited significantly from the viewpoint of \eqref{eq:mveq} as a `gradient flow' for the \emph{free energy functional} (see \cite{chayes2010mckean,carrillo2020long}). Let $\mathcal{P}_{ac}^+(\mathbb{S}^1)$ denote the space of absolutely continuous probability measures with a strictly positive density. Then, the free energy functional is a map $\mathcal{F}: \mathcal{P}_{ac}^+(\mathbb{S}^1) \times \mathbb{R}_+ \rightarrow \mathbb{R}$ given by
\begin{align*}
    \mathcal{F}(\rho, \kappa) := \frac{1}{2\pi}\int_0^{2\pi} \rho(\theta) \log \rho(\theta) \d \theta - \frac{\kappa}{8\pi^2}\int_0^{2\pi}\int_0^{2\pi}W(\theta - \phi)\rho(\theta)\rho(\phi)\d\theta\d\phi.
\end{align*}
Owing to this gradient flow structure, stationary solutions of~\eqref{eq:mveq}, that is, solutions of~\eqref{eq:stat}, correspond to the critical points of the free energy functional $\mathcal{F}(\cdot, \kappa)$; see \citet[Proposition~2.4]{carrillo2020long}. Their stability is determined by the local geometry of the energy landscape around these critical points.


The following is taken from \cite{chayes2010mckean,carrillo2020long}.

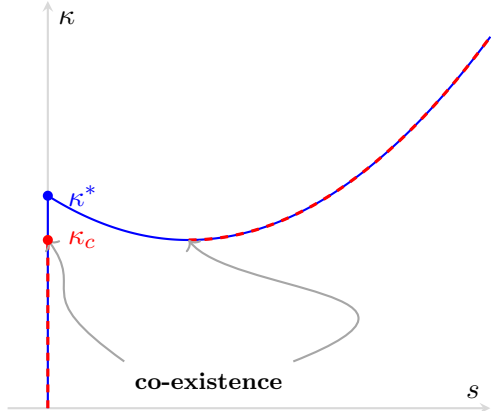
\begin{figure}[t]
\centering
\begin{tikzpicture}
\begin{axis}[
    width=8cm,
    height=7cm,
    axis lines=center,
    axis line style={gray!30},
    xlabel={$s$},
    ylabel={$\kappa$},
    xmin=-0.2, xmax=2.2,
    ymin=0, ymax=2.3,
    xtick=\empty,
    ytick=\empty,
    thick,
    samples=200,
]
\pgfmathsetmacro{\kstar}{1.2}   
\pgfmathsetmacro{\kc}{0.95}     
\pgfmathsetmacro{\szero}{0.7}   
\pgfmathsetmacro{\a}{(\kstar-\kc)/(\szero*\szero)} 

\addplot[blue, thick] coordinates {(0,0) (0,\kstar)};
\addplot[red, dashed, very thick] coordinates {(0,0) (0,\kc)};

\addplot[only marks, mark=*, mark size=1.5pt, blue] coordinates {(0,\kstar)};
\addplot[only marks, mark=*, mark size=1.5pt, red]  coordinates {(0,\kc)};

\node[anchor=west, blue] at (axis cs:0,\kstar) {$\ \kappa^*$};
\node[anchor=west, red]  at (axis cs:0,\kc)    {$\ \kappa_c$};

\addplot[blue, thick, domain=0:2.2] {\kc + \a*(x-\szero)^2};
\addplot[red, dashed, very thick, domain=\szero:2.2] {\kc + \a*(x-\szero)^2};

\node[anchor=south, font=\footnotesize\bfseries] (label) at (axis cs:0.8,0.05) {co-existence};

\draw[->, thick, gray!70] (label.north west) .. controls +(150:0.8) and +(-90:0.3) .. (axis cs:0,\kc);

\draw[->, thick, gray!70] (label.north east) .. controls +(30:0.8) and +(-90:0.3) .. (axis cs:\szero,\kc);

\end{axis}
\end{tikzpicture}
\caption{The dashed (red) curve represents the free energy minimizing branch. The solid (blue) curve represents bifurcation branch with $\kappa < \kappa^*$.} 
\label{fig:kappa-quadratic-overlay}
\end{figure}

\begin{definition}[Transition point and continuity of phase transition]\label{def:contpt}
    A point $\kappa_c>0$ is said to be a transition point for the McKean-Vlasov equation if (i) for $\kappa<\kappa_c$, the uniform distribution $p_0$ is the unique global minimizer of $\mathcal{F}(\cdot, \kappa)$, (ii) at $\kappa = \kappa_c$, $p_0$ is a global minimizer of $\mathcal{F}(\cdot, \kappa)$, and (iii) for $\kappa > \kappa_c$, there exists some $\rho_\kappa \in \mathcal{P}_{ac}^+(\mathbb{S}^1)$ with $\rho_\kappa \neq p_0$ such that $\rho_\kappa$ is a global minimizer of $\mathcal{F}(\cdot, \kappa)$.

    If (i) $p_0$ is the unique minimizer at $\kappa = \kappa_c$, and (ii) for any family of minimizers $\{\rho_\kappa: \kappa > \kappa_c\}$ we have $\lim_{\kappa \downarrow \kappa_c}\|\rho_\kappa - p_0\|_{H^1} = 0$, where $\|\cdot\|_{H^1}$ is as defined in~\cref{sec:solutinspace}, $\kappa_c$ is said to be a continuous transition point. If at least one of these conditions is violated, $\kappa_c$ is said to be a discontinuous transition point.
    
\end{definition}

The following theorem connects subcritical bifurcations with discontinuous phase transitions.

\begin{theorem}\label{thm:bifdpt}
   Let $a := \max_{\ell \ge 1} a_\ell >0$ and let $\Lambda := \{ \ell \in \mathbb{N}: a_\ell = a\}$. Suppose that there exist a nonempty subset $\Lambda' \subseteq \Lambda$, $\delta>0$ and a nontrivial $C^1$ branch of bifurcating solutions locally around $\kappa^*:= 2/a$ of the form
   $$
   \underline{p}(s) = s\sum_{i \in \Lambda'} y_i e_i + O (s^2), \quad \underline{p}'(s) = \sum_{i \in \Lambda'} y_i e_i + O (s), \quad \kappa(s) \in (0, \kappa^*) \text{ for all } s \in [0,\delta),
   $$
   where $y_i \neq 0$ for all $i \in \Lambda'$. Then $\kappa_c < \kappa^*$ and $\kappa_c$ is a discontinuous phase transition point. 

   Moreover, there exists $\epsilon>0$ such that for any potential $\tilde{W}(\theta) = \sum_{\ell=1}^\infty \tilde{a}_\ell \cos \ell \theta$, with $\sum_{\ell = 1}^{\infty} |a_\ell - \tilde{a}_\ell| < \epsilon$, $\tilde{a} := \max_{\ell \ge 1} \tilde{a}_\ell >0$ and the free energy for the associated McKean-Vlasov equation has a discontinuous transition point $\tilde{\kappa}_c < 2/\tilde{a}$. 
\end{theorem}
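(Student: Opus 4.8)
The plan is to connect the existence of a bifurcating branch with $\kappa$-values below $\kappa^*$ to the non-uniqueness of global free-energy minimizers, and then to leverage the gradient-flow/variational structure to deduce that the transition point $\kappa_c$ lies strictly below $\kappa^*$ and is discontinuous. First I would recall that $\mathcal{F}(\cdot,\kappa)$ admits the uniform measure $p_0$ as a critical point for every $\kappa$, and that $\kappa^* = 2/a$ is exactly the threshold at which $p_0$ loses local stability: for $\kappa < \kappa^*$ the Hessian of $\mathcal{F}(\cdot,\kappa)$ at $p_0$ is positive in every Fourier direction (since the $\ell$-th eigenvalue is proportional to $1 - \tfrac{\kappa a_\ell}{2} > 0$ when $a_\ell \le a$ and $\kappa<\kappa^*$), so $p_0$ is a strict local minimizer. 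Combined with the known fact (from \cite{chayes2010mckean,carrillo2020long}, or directly from concavity/compactness results referenced later in \cref{sec:global}) that $p_0$ is the \emph{global} minimizer for all sufficiently small $\kappa$, the map $\kappa \mapsto \mathcal{F}(\text{minimizer},\kappa)$ agrees with $\mathcal{F}(p_0,\kappa)$ up to some threshold.

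The heart of the argument is the following energy comparison. Along the bifurcating branch $(\underline p(s),\kappa(s))$ with $\kappa(s)<\kappa^*$ for $s\in[0,\delta)$, I would compute the free energy $\mathcal{F}(p_s,\kappa(s))$ of the corresponding density $p_s$ and compare it with $\mathcal{F}(p_0,\kappa(s))$. Using the quadratic-order expansion $\underline p(s) = s\sum_{i\in\Lambda'} y_i e_i + O(s^2)$ together with the fixed-point/exponential representation from \cref{lem:lemma1}, one expands $\mathcal{F}(p_s,\kappa(s)) - \mathcal{F}(p_0,\kappa(s))$ in powers of $s$. The linear term vanishes ($p_0$ is critical); the quadratic term is governed by the Hessian evaluated at $\kappa(s)$, which, since $\kappa(s)<\kappa^*=2/a$ and each $i\in\Lambda'$ has $a_i=a$, is \emph{negative definite on the span of $\{e_i:i\in\Lambda'\}$} — wait, more precisely, at $\kappa = \kappa^*$ the Hessian is degenerate on that span, so I must track the sign carefully: because $\kappa(s)$ approaches $\kappa^*$ from below while $s\to 0$, and the branch satisfies $F(\underline p(s),\kappa(s))=0$, the natural object is $\frac{d}{ds}\mathcal{F}(p_s,\kappa(s))$, which one shows is strictly negative for small $s>0$ by differentiating and using $\underline p'(0)=\sum y_i e_i$ together with $\kappa'(0)\le 0$ (or $\kappa$ convex turning back). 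This yields $\mathcal{F}(p_s,\kappa(s)) < \mathcal{F}(p_0,\kappa(s))$ for small $s>0$, hence at the parameter value $\kappa_1 := \kappa(s_1)$ for some small $s_1$, the uniform measure is \emph{not} the global minimizer; since it was the global minimizer for small $\kappa$, continuity forces a transition point $\kappa_c \le \kappa_1 < \kappa^*$. To see $\kappa_c$ is discontinuous, I would argue by contradiction: if it were continuous, minimizers $\rho_\kappa \to p_0$ in $H^1$ as $\kappa\downarrow\kappa_c$, so their Fourier modes are small and satisfy $F(\underline\rho_\kappa,\kappa)=0$ near $(\underline 0,\kappa_c)$; but \cref{thm:CRthm} (applied at each $a_\ell$, noting $\kappa_c<\kappa^*\le 2/a_\ell$ for the relevant modes) shows the only solutions of $F=0$ in a neighborhood of $(\underline 0,\kappa)$ for $\kappa<2/a_\ell$ are trivial — so $\rho_\kappa=p_0$ for $\kappa$ slightly above $\kappa_c$, contradicting that $p_0$ is the unique minimizer only up to $\kappa_c$ and that there is a better competitor for $\kappa$ slightly above $\kappa_c$ coming from the bifurcating branch. (One must be slightly careful: the competitor producing $\mathcal{F}<\mathcal{F}(p_0)$ lives at $\kappa_1>\kappa_c$ but is bounded away from $p_0$, which is precisely the failure of continuity.)

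For the stability/perturbation clause, I would use that all quantities entering the argument — the maximal Fourier coefficient $a=\max_\ell a_\ell$, the existence of a turning-back branch, and the strict energy gap $\mathcal{F}(p_{s_1},\kappa_1) < \mathcal{F}(p_0,\kappa_1) - c$ for some $c>0$ — are \emph{open conditions} under the $\ell^1$-perturbation $\sum_\ell|a_\ell-\tilde a_\ell|<\epsilon$. Concretely: the free energy functional depends on $W$ only through the bilinear form $\iint W(\theta-\phi)\rho\rho$, which is a continuous (indeed Lipschitz) function of $(a_\ell)$ in $\ell^1$ uniformly over probability densities; the branch structure from \cref{thm:CRthm}/\cref{thm:high} and the sign of the relevant curvature/coefficient (e.g. $R_{\ell^*}$, or the matrix $\tilde B$) vary continuously; and $\tilde a := \max_\ell \tilde a_\ell$ stays positive and close to $a$. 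Hence for $\epsilon$ small enough, $\tilde W$ still has a branch turning back from $2/\tilde a$, the same energy-comparison inequality holds with a slightly perturbed competitor, and the argument above produces $\tilde\kappa_c < 2/\tilde a$ which is discontinuous. The main obstacle I anticipate is making the energy expansion along the branch rigorous at the exact order needed: at $\kappa=\kappa^*$ the quadratic form degenerates on $\mathrm{span}\{e_i:i\in\Lambda'\}$, so the sign of $\mathcal{F}(p_s,\kappa(s))-\mathcal{F}(p_0,\kappa(s))$ for small $s$ is a competition between the (small, negative because $\kappa(s)<\kappa^*$) quadratic term and higher-order terms, and one must verify that along the specific branch the leading nonvanishing term has the favorable sign — this is where the hypotheses $y_i\neq 0$, $\underline p'(0)=\sum y_i e_i$, and $\kappa(s)<\kappa^*$ are used in an essential way, presumably by writing $\frac{d}{ds}\big[\mathcal{F}(p_s,\kappa(s))-\mathcal{F}(p_0,\kappa(s))\big]$ as an inner product of $\underline p'(s)$ with $D_{\underline p}\mathcal{F}$ evaluated along the branch and exploiting $D_{\underline p}\mathcal{F}(p_s,\kappa(s))$ being parallel (up to the constraint) to $F(\underline p(s),\kappa(s))=0$.
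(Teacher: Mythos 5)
There is a genuine gap, and it sits at the heart of your argument: the inequality you aim to prove, $\mathcal{F}(p_s,\kappa(s)) < \mathcal{F}(p_0,\kappa(s))$ for small $s>0$, is false in general, as is the claim that $\frac{d}{ds}\mathcal{F}(p_s,\kappa(s))<0$. Since each $p_s$ is stationary for the parameter $\kappa(s)$, the chain rule gives
\begin{align*}
\frac{d}{ds}\mathcal{F}(p_s,\kappa(s)) \;=\; \underbrace{(\kappa(s)-\kappa(s))\!\int W\star p_s\,\partial_s p_s}_{=0} \;-\;\frac{\kappa'(s)}{2}\,\mathcal{E}(p_s,p_s),
\end{align*}
so along a branch that bends backward ($\kappa'\le 0$) the free energy at the branch's own parameter is \emph{nondecreasing} in $s$, i.e.\ the small-amplitude backward branch lies \emph{above} the uniform state. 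This is exactly what the one-mode subcritical normal form shows: with $f(x,\kappa)=\tfrac{2-\kappa a}{2}x^2-\tfrac{|b|}{4}x^4$, the nontrivial critical points for $\kappa<2/a$ satisfy $f=\tfrac{(2-\kappa a)^2}{4|b|}>0=f(0,\kappa)$. So exhibiting a competitor that beats $p_0$ at the value $\kappa_1=\kappa(s_1)<\kappa^*$ cannot be done this way, and the rest of your chain (transition point below $\kappa_1$, contradiction argument for discontinuity) has no starting point. Your worry about the degenerate Hessian was pointing at a real obstruction, but the resolution is not a finer expansion at $\kappa(s)$.

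The paper's proof avoids this by freezing the parameter at $\kappa^*$ and sliding along the branch: using $\log p_s=\kappa_s W\star p_s + C_s$ one gets the exact identity $\partial_s\mathcal{F}(p_s,\kappa^*)=(\kappa_s-\kappa^*)\int W\star p_s\,\partial_s p_s$, and in Fourier variables $\int W\star p_s\,\partial_s p_s=\sum_\ell a_\ell p_\ell(s)p'_\ell(s)=a\,s\sum_{i\in\Lambda'}y_i^2+O(s^2)>0$, while $\kappa_s-\kappa^*<0$; integrating in $s$ yields $\mathcal{F}(p_\eta,\kappa^*)<\mathcal{F}(p_0,\kappa^*)$ (affinity of $\mathcal{F}$ in $\kappa$ is what lets the sign flip relative to the comparison at $\kappa(s)$). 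The conclusion that there is a discontinuous transition point $\kappa_c<\kappa^*$ is then imported from \citet[Proposition 5.8(b)]{carrillo2020long} rather than re-derived; your sketch of the discontinuity-by-local-uniqueness argument is in the right spirit but is not needed once that proposition is invoked. For the perturbation clause the paper also does something lighter than what you propose: it does \emph{not} require persistence of the turning-back branch under perturbation of $W$, only the uniform estimate $|\mathcal{F}_W(\rho,\kappa)-\mathcal{F}_{\tilde W}(\rho,\kappa)|\le\tfrac{\kappa}{2}\sum_\ell|a_\ell-\tilde a_\ell|$, so the \emph{same} competitor $p_\eta$ still beats the uniform state for $\tilde W$ when $\epsilon$ is small, and \citet[Proposition 5.8(b)]{carrillo2020long} applies again. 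If you repair your main inequality by switching to the fixed-$\kappa^*$ comparison, your proposal essentially becomes the paper's proof.
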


\cref{fig:kappa-quadratic-overlay} visually depicts the situation described in \cref{thm:phasetr} in the case of a bifurcation with $\kappa < \kappa^*$ and a single dominant Fourier mode, with modal amplitude plotted along the horizontal axis and $\kappa$ along the vertical axis. The `local minimum' corresponds to the transition point $\kappa_c$ where the global free energy minimizing branch discontinuously transitions from the vertical axis (uniform distribution) to a non-trivial stationary distribution. At $\kappa = \kappa_c$, there is `coexistence' in the sense that there are two minimizing solutions with the same free energy but different values of the interaction energy; see \cref{thm:global} for more details.

The following corollary follows immediately from \cref{thm:CRthm}, \cref{thm:high} and \cref{thm:bifdpt}, and identifies conditions for bifurcations with a $\kappa < \kappa^*$ branch, and hence discontinuous phase transitions, to hold.

\begin{corollary}\label{cor:dpt}
    Suppose $W$ satisfies $a := \max_{\ell \ge 1} a_\ell >0$ and one of the following:
    \begin{itemize}
        \item[(i)] $a_1>0$, $a_\ell < a_1$ for all $\ell \ge 2$ and $a_1/2 < a_2 < a_1$;
        \item[(ii)] The hypotheses of \cref{thm:high}(a) with $\operatorname{sgn}\left(\tilde{B}^{-1}\mathbf{1}\right)<0$;
        \item[(iii)] The hypotheses of \cref{thm:high}(b).
    \end{itemize}
    Then the associated McKean-Vlasov equation has a discontinuous transition point $\tilde{\kappa}_c < 2/\tilde{a}$. The same assertion holds for any potential $\tilde{W}(\theta) = \sum_{\ell=1}^\infty \tilde{a}_\ell \cos \ell \theta$ with $\sum_{\ell = 1}^{\infty} |a_\ell - \tilde{a}_\ell| < \epsilon$ for sufficiently small $\epsilon>0$.
\end{corollary}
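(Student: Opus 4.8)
\textbf{Proof proposal.}
The plan is to reduce each of the three cases to a single application of \cref{thm:bifdpt}: in every case I will exhibit a nontrivial $C^1$ bifurcating branch about $\kappa^\ast = 2/a$ whose $\kappa$-component satisfies $\kappa(s)\in(0,\kappa^\ast)$ for $s$ in a one-sided neighborhood of $0$ and whose leading-order profile has the form $\underline{p}(s)=s\sum_{i\in\Lambda'}y_i e_i+O(s^2)$ with $\underline{p}'(s)=\sum_{i\in\Lambda'}y_i e_i+O(s)$ and all $y_i\neq 0$. Once such a branch is produced, \cref{thm:bifdpt} yields $\kappa_c<\kappa^\ast$ together with the discontinuity of the transition; and the perturbation clause of the corollary is precisely the ``moreover'' part of \cref{thm:bifdpt}, so nothing further is required beyond identifying the branch in each case.

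For case (i) I would apply \cref{thm:CRthm} with $\ell^\ast=1$ (legitimate since $a_1>0$ and $a_\ell<a_1$ for $\ell\ge 2$ forces $a_\ell\neq a_1$ for $\ell\neq 1$, and $a=a_1$), obtaining the pitchfork branch $\underline{p}(s)=se_1+o(s)$ with $\kappa(0)=\kappa^\ast$, $\kappa'(0)=0$ and curvature $\kappa''(0)=\tfrac{2}{a_1}R_1(W)$, $R_1(W)=(a_1-2a_2)/(a_1-a_2)$. The hypothesis $a_1/2<a_2<a_1$ makes the denominator positive and the numerator negative, so $R_1(W)<0$ and the branch is subcritical: Taylor expansion gives $\kappa(s)=\kappa^\ast+\tfrac12\kappa''(0)s^2+o(s^2)<\kappa^\ast$ for all small $s\neq 0$, and shrinking $\delta$ keeps $\kappa(s)>0$ on $[0,\delta)$. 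Since $F$ is $C^\infty$ (\cref{lem:Freg}), the Crandall--Rabinowitz branch is itself $C^\infty$, hence $C^1$ with $\underline{p}'(0)=e_1$, so $\underline{p}(s)=se_1+O(s^2)$ and $\underline{p}'(s)=e_1+O(s)$; taking $\Lambda'=\{1\}$, $y_1=1$, the hypotheses of \cref{thm:bifdpt} are met.

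Cases (ii) and (iii) are even more direct, since \cref{thm:high} already outputs branches in exactly the form required. In case (ii), \cref{thm:high}(a) produces $\underline{p}(s)=s\sum_{r=1}^{m}b_r e_{\ell_{i_r}}+O(s^2)$, $\underline{p}'(s)=\sum_{r=1}^{m}b_r e_{\ell_{i_r}}+O(s)$ with all $b_r\neq 0$, and $\kappa(s)=\tfrac{2}{a}\bigl(1+\operatorname{sgn}(\tilde{B}^{-1}\mathbf{1})\,\tfrac{s^2}{2}\bigr)$; under the standing assumption $\operatorname{sgn}(\tilde{B}^{-1}\mathbf{1})=-1$ this reads $\kappa(s)=\tfrac{2}{a}(1-\tfrac{s^2}{2})<\kappa^\ast$ for $s\neq 0$ (and positive for $s$ small), so \cref{thm:bifdpt} applies with $\Lambda'=\{\ell_{i_1},\dots,\ell_{i_m}\}$ and $y_{\ell_{i_r}}=b_r$. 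In case (iii), \cref{thm:high}(b) gives four transcritical branches $\underline{p}^\sigma(s)=s(\sigma_1 e_l+\sigma_2 e_m+\sigma_3 e_{l+m})+O(s^2)$, $(\underline{p}^\sigma)'(s)=\sigma_1 e_l+\sigma_2 e_m+\sigma_3 e_{l+m}+O(s)$ with $\kappa(s)=\tfrac{2}{a}(1-s)$; picking any one of them and restricting to $s\in[0,\delta)$ with $\delta<1$ gives $\kappa(s)\in(0,\kappa^\ast)$, and the nonzero coefficients $\sigma_i\in\{\pm 1\}$ let us take $\Lambda'=\{l,m,l+m\}$ in \cref{thm:bifdpt}.

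I expect no serious obstacle; the argument is an assembly of \cref{thm:CRthm}, \cref{thm:high} and \cref{thm:bifdpt}. The only points needing a moment's care are (a) verifying that the Crandall--Rabinowitz branch in case (i) is $C^1$ with the stated first derivative, which follows from the smoothness of $F$ in \cref{lem:Freg} and the regularity built into the Crandall--Rabinowitz/Lyapunov--Schmidt construction, and (b) shrinking $\delta$ in each case so that $\kappa(s)$ remains strictly positive and the expansions remain valid on $[0,\delta)$ — both routine. The perturbation statement then transfers verbatim through the ``moreover'' clause of \cref{thm:bifdpt}, since the maximal coefficient $a=\max_\ell a_\ell$ is attained and remains positive under small $\ell^1$ perturbations of $(a_\ell)_\ell$.
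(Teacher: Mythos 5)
Your proposal is correct and follows exactly the route the paper intends: the paper gives no separate proof beyond noting that the corollary "follows immediately" from \cref{thm:CRthm}, \cref{thm:high} and \cref{thm:bifdpt}, and your case-by-case verification (subcritical pitchfork via $R_1(W)<0$ in case (i), the branches of \cref{thm:high}(a),(b) in cases (ii),(iii), then \cref{thm:bifdpt} including its perturbation clause) is precisely that assembly, with the regularity and sign checks filled in correctly.
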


\begin{remark}\label{rem:dptCGPS}
    \cref{cor:dpt}(iii) can be seen as the analogue of \citet[Theorem 5.11]{carrillo2020long}. The above theorem and corollary show that a three-mode resonance is just one way in which a discontinuous transition point occurs. The more general cause seems to be a bifurcating branch with $\kappa$ values less than $2/a$.
\end{remark}

\subsection{Phase transition for the Noisy Transformer}
Recall the Noisy Transformer model described in~\eqref{eq:NMFTpotential}.
The following result characterizes the continuity of phase transition for the Noisy Transformer model. Before stating the result, we note that the function
$$
R(\beta) := \frac{I_2(\beta)}{I_1(\beta)} \ \text{ is strictly increasing for } \ \beta>0. 
$$
\begin{theorem}[Phase Transition of Transformer Model]\label{thm:phasetr}
There exists $0 < \beta_{-} < \beta_+$ such that $\kappa^*(\beta) = \beta/I_1(\beta)$ is a continuous phase transition point for $\beta\leq \beta_{-}$ and there exists a discontinuous phase transition point $\kappa_c(\beta) < \kappa^*(\beta)$ for $\beta > \beta_{+}$.
Further,
$$
\beta_+ \le R^{-1}(1/2) \approx 2.447,
$$
where the first bifurcation point $\kappa^*(\beta)$ changes from being supercritical to subcritical as $\beta$ crosses $R^{-1}(1/2)$.
\end{theorem}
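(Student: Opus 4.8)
The plan is to reduce every assertion to monotonicity and small/large‑$\beta$ asymptotics of the Bessel quotients, and then feed these into \cref{thm:CRthm}, \cref{cor:dpt}, \cref{thm:main_thm} and a perturbative comparison with the Kuramoto free energy. Two structural facts drive everything. Since $\ell\mapsto I_\ell(\beta)$ is strictly decreasing for every $\beta>0$, the coefficients $a_\ell(\beta)=2I_\ell(\beta)/\beta$ satisfy $a_1(\beta)=\max_{\ell\ge1}a_\ell(\beta)$ with strict inequality for $\ell\ge2$, so $a:=\max_\ell a_\ell(\beta)=a_1(\beta)$ and $\kappa^*(\beta)=2/a_1(\beta)=\beta/I_1(\beta)$; and the $1$‑signature is $R_1(\beta)=\frac{a_1-2a_2}{a_1-a_2}=\frac{I_1(\beta)-2I_2(\beta)}{I_1(\beta)-I_2(\beta)}$, whose denominator is positive, so $\operatorname{sgn}R_1(\beta)=\operatorname{sgn}\!\big(\tfrac12-R(\beta)\big)$ with $R(\beta)=I_2(\beta)/I_1(\beta)=a_2(\beta)/a_1(\beta)$.

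\textbf{Discontinuous regime and the crossover.} Set $\beta_+:=R^{-1}(1/2)$; this is well defined since $R$ is continuous, strictly increasing (as recalled before the statement), with $R(0^+)=0$ and $R(\beta)\to1$ as $\beta\to\infty$ (from \cref{largebetaIbeta}), and a numerical evaluation gives $R^{-1}(1/2)\approx 2.447$. For $\beta>\beta_+$ one has $a_2(\beta)/a_1(\beta)=R(\beta)>1/2$, hence $a_1(\beta)/2<a_2(\beta)<a_1(\beta)$, together with $a_1(\beta)>0$ and $a_\ell(\beta)<a_1(\beta)$ for $\ell\ge2$; this is exactly hypothesis (i) of \cref{cor:dpt}, so the Noisy Transformer admits a discontinuous transition point $\kappa_c(\beta)<2/a_1(\beta)=\kappa^*(\beta)$. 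Feeding the same dichotomy into \cref{eq:curvature} gives $\kappa''(0)=\tfrac{2}{a_1}R_1(\beta)$, which is $>0$ for $\beta<R^{-1}(1/2)$, $=0$ at $\beta=R^{-1}(1/2)$, and $<0$ beyond, i.e. the stated supercritical‑to‑subcritical crossover of the first bifurcation at $\beta=R^{-1}(1/2)$.

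\textbf{Continuous regime.} Here I would argue by perturbation off the Kuramoto limit $\beta\downarrow0$. Using $I_1(\beta)=\beta/2+O(\beta^3)$ and $I_\ell(\beta)=O(\beta^\ell)$ one gets $a_1(\beta)\to1$, $\kappa^*(\beta)\to2$, and $\varepsilon(\beta):=\sum_{\ell\ge2}a_\ell(\beta)=\frac{e^\beta-1}{\beta}-\frac{2I_1(\beta)}{\beta}\to0$. Writing $W_\beta=a_1(\beta)\cos\theta+V_\beta$ with $\|V_\beta\|_\infty\le\varepsilon(\beta)$,
$$
\mathcal F(\rho,\kappa)=\mathcal F_{\cos}\!\big(\rho,\kappa a_1(\beta)\big)-\frac{\kappa}{8\pi^2}\iint V_\beta(\theta-\phi)\rho(\theta)\rho(\phi)\,\d\theta\,\d\phi ,
$$
where $\mathcal F_{\cos}$ denotes the Kuramoto free energy. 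I would then combine three ingredients: (i) classically $p_0$ is the unique global minimizer of $\mathcal F_{\cos}(\cdot,\lambda)$ for all $\lambda\le2$ (see \citet[Section~6.1]{carrillo2020long}, \cite{bertini2010dynamical}), and on each entropy sublevel set $\{\tfrac{1}{2\pi}\int\rho\log\rho\le C\}$ there is a uniform gap $\mathcal F_{\cos}(\rho,\lambda)\ge\delta_0(\varepsilon,C)>0$ whenever $\|\rho-p_0\|_{H^1}\ge\varepsilon$ and $\lambda\le2$, obtained from lower semicontinuity of the entropy, weak compactness of such sublevel sets, and $\mathcal F_{\cos}(\cdot,2)>0$ off $p_0$; (ii) near $p_0$, for $\kappa\le\kappa^*(\beta)$ the quadratic part of $\mathcal F(\cdot,\kappa)$ at $p_0$ has coefficients $\propto 2-\kappa a_\ell(\beta)$, strictly positive for $\ell\ge2$ and $\ge0$ for $\ell=1$, while supercriticality $R_1(\beta)>0$ (valid for $\beta<\beta_+$) makes the quartic term in the mode‑$1$ direction strictly positive, so by the Lyapunov–Schmidt reduction underlying \cref{thm:main_thm} the measure $p_0$ is a strict local minimizer and the unique critical point in a fixed $H^1$‑ball for all $\kappa\in[\kappa^*(\beta)-\delta_1,\kappa^*(\beta)]$; and (iii) $\big|\iint V_\beta(\theta-\phi)\rho(\theta)\rho(\phi)\,\d\theta\,\d\phi\big|\le(2\pi)^2\|V_\beta\|_\infty\le(2\pi)^2\varepsilon(\beta)$. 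Choosing $\beta_-\in(0,\beta_+)$ small enough that $\tfrac{\kappa^*(\beta)}{2}\varepsilon(\beta)<\delta_0$ for all $\beta\le\beta_-$ then forces $\mathcal F(\rho,\kappa)>0$ for every $\rho\neq p_0$ and every $\kappa\le\kappa^*(\beta)$, so $m(\kappa)=0$ on $(0,\kappa^*(\beta)]$ with $p_0$ the unique minimizer. Since $p_0$ is linearly unstable for $\kappa>\kappa^*(\beta)$ (\cite{gates1970van}; see \citet[Proposition~2.9]{chayes2010mckean}), $m(\kappa)<0$ there, hence $\kappa_c(\beta)=\kappa^*(\beta)$ is a transition point; continuity in the sense of \cref{def:contpt} follows because any family of minimizers $\rho_\kappa$ with $\kappa\downarrow\kappa^*(\beta)$ has bounded entropy (as $W_\beta$ is bounded and $\mathcal F(\rho_\kappa,\kappa)\le0$), subconverges weakly to a minimizer at $\kappa^*(\beta)$, necessarily $p_0$, and this convergence upgrades to $H^1$ via the fixed‑point identity $\rho_\kappa=\tfrac1Z e^{\kappa W_\beta\star\rho_\kappa}$ and the smoothing action of $W_\beta$.

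\textbf{Main obstacle.} The hard part is exactly ingredients (i)–(iii): obtaining positivity of $\mathcal F(\cdot,\kappa)$ \emph{uniformly} for $\kappa$ up to and including the bifurcation value $\kappa^*(\beta)$, where the mode‑$1$ Hessian degenerates, so linear stability alone is insufficient and one must glue a local argument (supercriticality at $p_0$, via \cref{thm:main_thm}) to a global compactness estimate away from $p_0$ (via the Kuramoto landscape); the smallness of the perturbation $V_\beta$ is what makes the two regions meet. This gluing, rather than any single estimate, is the crux, and it is also the reason the continuity threshold $\beta_-$ produced this way need not be sharp and in particular need not equal $\beta_+=R^{-1}(1/2)$.
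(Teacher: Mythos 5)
Your treatment of the discontinuous regime and of the crossover is correct and essentially coincides with the paper's: for $R(\beta)>1/2$ the first bifurcation at $\kappa^*(\beta)=2/a_1(\beta)$ is subcritical by \eqref{eq:curvature}, and \cref{thm:bifdpt} (equivalently \cref{cor:dpt}(i), which is what you invoke) gives a discontinuous transition point $\kappa_c(\beta)<\kappa^*(\beta)$; the sign change of $R_1(\beta)$ at $R^{-1}(1/2)\approx 2.447$ gives the stated supercritical-to-subcritical switch.

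The continuous regime, however, contains a genuine gap, located exactly where you placed the crux. Your ingredient (i) --- a uniform gap $\mathcal F_{\cos}(\rho,\lambda)\ge\delta_0(\varepsilon,C)>0$ for all $\lambda\le 2$ and all $\rho$ in an entropy sublevel set with $\|\rho-p_0\|_{H^1}\ge\varepsilon$ --- is false. Take $\rho_n=1+n^{-1}\cos(n\theta)$: its Kuramoto interaction energy vanishes (it has no first Fourier mode), its entropy is $O(n^{-2})$, so $\mathcal F_{\cos}(\rho_n,2)\to 0$, while $\|\rho_n-p_0\|_{H^1}^2\ge\int_0^{2\pi}\sin^2(n\theta)\,\d\theta=\pi$ stays bounded below. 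The compactness argument you sketch cannot repair this, because the constraint $\|\rho-p_0\|_{H^1}\ge\varepsilon$ is not weakly closed: the minimizing sequence above converges weakly to $p_0$ itself, where $\mathcal F_{\cos}$ vanishes. Since your gluing requires the far-region gap to dominate the $\rho$-uniform perturbation bound $\tfrac{\kappa}{2}\varepsilon(\beta)$ coming from $\|V_\beta\|_\infty$, the decomposition $W_\beta=a_1(\beta)\cos\theta+V_\beta$ cannot close the argument as written; switching the far-region metric to $L^1$ or relative entropy creates a mismatch with the $H^1$/$\ell^2_w$ neighborhood controlled by the Lyapunov--Schmidt analysis in your ingredient (ii), so the near and far regions no longer glue (and, as a secondary soft spot, uniqueness of the critical point in a small ball plus supercriticality of the branch does not by itself give positivity of $\mathcal F(\cdot,\kappa^*)-\mathcal F(p_0,\kappa^*)$ on the whole ball --- one again needs boundary information, i.e.\ the far-region bound). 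The paper avoids the landscape argument entirely: by \citet[Proposition 5.8(a)]{carrillo2020long} it suffices to show that $p_0$ is the \emph{only stationary solution} at $\kappa=\kappa^*(\beta)$ for small $\beta$, and this is done mode-wise: any solution's first Fourier coefficient satisfies $\chi_1(\beta)\to0$ as $\beta\to0$ (via the exponential fixed-point form and the Kuramoto consistency relation), and combining the $\ell=1$ and $\ell=2$ identities of \cref{lem:lemma1} with the geometric decay from \cref{lem:lemma3} yields $\kappa^*(\beta)\bigl(a_1-2a_2\bigr)\bigl[\kappa^*(\beta) a_1\chi_1^3/\bigl(2(2-\kappa^*(\beta) a_2)\bigr)+O(|\chi_1|^5)\bigr]=O(|\chi_1|^5)$, whose leading coefficient stays bounded away from zero as $\beta\downarrow0$ by \eqref{ascale}; hence $\chi_1=0$, and then $\underline{\chi}=\underline 0$ by \cref{lem:lemma2}. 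If you wish to keep a variational proof, you would need per-mode coercivity $2-\kappa a_\ell(\beta)\ge c>0$ for $\ell\ge2$ (rather than a single $\|V_\beta\|_\infty$ bound) together with a quantitative quartic estimate in the degenerate first mode, which is a substantially different argument from the one proposed.
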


\section{Towards global properties}\label{sec:global}

In general, going beyond local properties for infinite-dimensional non-linear systems like ours is a formidable task, and little can be said. In this section, we record some global properties. We hope to address this in greater detail in a subsequent article.

\subsection{A `singular' example: log-sine potential and Poisson kernel} The next theorem gives an example of a `singular' potential in the critical bifurcation regime which leads to an explicitly computable global picture with infinitely many bifurcation points. 
    \begin{theorem}\label{cor:explicitbif}
        Let the interaction potential be given by
        \begin{align*}
        W_{Poi}(\theta) := -\log\left(2\sin(\theta/2)\right) = \sum_{\ell=1}^{\infty}\frac{1}{\ell}\cos \ell \theta, \quad \theta \in [0,2\pi). 
        \end{align*}
        Then the bifurcation points of \eqref{eq:stat} are precisely $\kappa \in \{2\ell : \ell \in \mathbb{N}\}$. The associated bifurcating non-trivial solutions at $\kappa^* := 2\ell^*$ correspond to the \textbf{Poisson kernel with wave number} $\ell^*$ given by 
\begin{align}\label{eq:PS}
    P_{r,\ell^*}(\theta) := \frac{1-r^2}{1-2r\cos (\ell^*\theta) + r^2}, \quad \theta \in [0,2\pi), \, r \in (-1,1). 
\end{align}
In particular, as $r \uparrow 1$, the associated measure $\mu_{r,\ell^*}(\d\theta) := P_{r,\ell^*}(\theta)\d \theta$ weakly converges to the sum of Dirac masses $\frac{1}{\ell^*}\sum_{j=0}^{\ell^*-1}\delta_{2j\pi/\ell^*}$ and, as $r \downarrow -1$, $\mu_{r,\ell^*}(\d\theta)$ weakly converges to $\frac{1}{\ell^*}\sum_{j=0}^{\ell^*-1}\delta_{(\pi + 2j\pi)/\ell^*}$. Moreover, there are no more bifurcation points around any stationary solution.
\end{theorem}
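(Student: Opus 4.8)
The driving observation is that for $W_{Poi}$ one has $a_\ell = 1/\ell$, which makes the quadratic system \eqref{eq:Fid} collapse. Indeed $\ell a_\ell = 1$ for all $\ell$, so the coefficient $\ell(2-\kappa a_\ell)$ equals $2\ell - \kappa$; in the first sum $j a_j = 1$; and in the second sum $j a_j - (j-\ell)a_{j-\ell} = 1 - 1 = 0$, so that entire sum disappears. Hence, by \cref{lem:lemma1}, a density $p = 2\sum_{\ell\ge 0}p_\ell\cos\ell\theta$ with $p_0 = 1/2$ solves \eqref{eq:stat} (equivalently $\underline p = (p_\ell)_{\ell\ge1}\in\ell^2_w$ satisfies $F(\underline p,\kappa)=\underline 0$) precisely when
\[
(2\ell - \kappa)\, p_\ell \;=\; \kappa \sum_{j=1}^{\ell-1} p_j\, p_{\ell-j}, \qquad \ell \ge 1 .
\]
The plan is to solve this recurrence in closed form by a generating-function argument, read off the complete list of stationary solutions, and deduce everything else from it.

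First I would pass to $P(z) := \sum_{\ell\ge1}p_\ell z^\ell$, which is analytic on the open unit disk since $\underline p\in\ell^2$, with $P(0)=0$. Multiplying the recurrence by $z^\ell$ and summing over $\ell\ge1$ turns it into the separable first-order ODE $2zP'(z) = \kappa\,P(z)\big(P(z)+1\big)$. Separating variables gives $P/(P+1) = C z^{\kappa/2}$ in a neighbourhood of $z=0$ for a real constant $C$. Since the left-hand side is analytic at $z=0$, either $C=0$, forcing $P\equiv0$ (the uniform solution), or $\kappa/2 = \ell^*\in\mathbb{N}$, in which case
\[
P(z) \;=\; \frac{C z^{\ell^*}}{1 - C z^{\ell^*}} \;=\; \sum_{n\ge1} C^n z^{n\ell^*},
\]
i.e. $p_{n\ell^*} = C^n$ and $p_j = 0$ whenever $\ell^*\nmid j$. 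Writing $r:=C$, these are exactly the Fourier coefficients of the Poisson kernel $P_{r,\ell^*}$ of \eqref{eq:PS}, and membership $\underline p\in\ell^2_w$ forces $|r|<1$. Since $P_{r,\ell^*}$ is a smooth, positive, even function it lies in $H^1_s(\mathbb{S}^1)$, so the converse in \cref{lem:lemma1} confirms it is a genuine stationary solution. This would establish the first two assertions: nontrivial stationary solutions exist exactly at $\kappa\in\{2\ell:\ell\in\mathbb{N}\}$, and at $\kappa^*=2\ell^*$ they form precisely the family $\{P_{r,\ell^*}:r\in(-1,1)\setminus\{0\}\}$. (The bifurcation at $2\ell^*$ is \emph{critical}, consistent with $R_{\ell^*}(W_{Poi}) = (a_{\ell^*}-2a_{2\ell^*})/(a_{\ell^*}-a_{2\ell^*}) = 0$ in \cref{thm:CRthm}: the branch is the vertical segment $\kappa\equiv2\ell^*$ rather than a parabola.)

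The "no more bifurcation points" claim would then follow from the complete classification: the zero set of $F$ consists of the trivial branch $\{(\underline 0,\kappa):\kappa>0\}$ together with, for each $\ell^*\in\mathbb{N}$, the arc $\{(\underline p^{(r,\ell^*)},2\ell^*):r\in(-1,1)\}$, where $\underline p^{(r,\ell^*)}$ has coordinate $r^n$ at index $n\ell^*$ and $0$ elsewhere. If $\kappa_b\notin\{2\ell:\ell\in\mathbb{N}\}$, a small neighbourhood of $(\underline 0,\kappa_b)$ contains no nontrivial solution, so it is not a bifurcation point; at $\kappa_b=2\ell^*$ the arc passes through $(\underline 0,2\ell^*)$. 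Around a nontrivial $(\underline p^{(r,\ell^*)},2\ell^*)$ with $r\ne0$, a small neighbourhood meets neither the trivial branch (bounded away) nor any arc with $\ell'\ne\ell^*$ (which lives at $\kappa=2\ell'\ne2\ell^*$), so the local solution set is a single smooth arc and no branching occurs. For the degenerations, I would write $\mu_{r,\ell^*}(\d\theta) = P_{r,\ell^*}(\theta)\,\d\theta = P_r(\ell^*\theta)\,\d\theta$ and invoke the classical fact that the Poisson kernel $\phi\mapsto P_r(\phi)$ is an approximate identity on $\mathbb{S}^1$: as $r\uparrow1$ its mass concentrates at $\phi=0$, so after the change of variables the mass of $\mu_{r,\ell^*}$ splits equally over the $\ell^*$ solutions of $\ell^*\theta\equiv0\pmod{2\pi}$, giving $\tfrac{1}{\ell^*}\sum_{j=0}^{\ell^*-1}\delta_{2j\pi/\ell^*}$; the case $r\downarrow-1$ reduces to this via $P_{-s}(\phi)=P_s(\phi+\pi)$, which shifts the concentration set to $\ell^*\theta\equiv\pi\pmod{2\pi}$.

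The main obstacle is making the generating-function step airtight for \emph{all} solutions: one must note that classifying formal power-series solutions of $2zP'=\kappa P(P+1)$ is the same, coefficient by coefficient, as classifying sequences obeying the recurrence, and that the separation of variables is legitimate in a neighbourhood of $z=0$ where $|P|<1$. The real leverage is then the elementary fact that an analytic function on a disk whose modulus is $\mathrm{const}\cdot|z|^{\kappa/2}$ must equal $\gamma z^{\kappa/2}$ with $\kappa/2$ a nonnegative integer (or vanish identically), which is exactly what pins the bifurcation values to $2\mathbb{N}$. Everything downstream—the bifurcation bookkeeping and the weak limits—is then routine.
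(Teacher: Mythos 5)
Your proposal is correct and rests on the same key reduction as the paper: apply \cref{lem:lemma1} with $a_\ell=1/\ell$, observe that $ja_j-(j-\ell)a_{j-\ell}=0$ kills the cross-term sum in \eqref{eq:Fid}, and reduce the problem to the convolution recurrence $(2\ell-\kappa)p_\ell=\kappa\sum_{j<\ell}p_jp_{\ell-j}$, after which the Poisson-kernel identification and the approximate-identity argument for the weak limits are the same as in the paper. Where you differ is only in how the recurrence is solved: the paper argues directly by induction (the minimal nonzero mode $n_0$ forces $\kappa=2n_0$ since its equation reads $(2n_0-\kappa)p_{n_0}=0$, all indices not divisible by $\ell^*$ vanish, and $(k-1)p_{k\ell^*}=\sum_{j=1}^{k-1}p_{j\ell^*}p_{(k-j)\ell^*}$ yields $p_{k\ell^*}=r^k$), whereas you pass to the generating function and solve $2zP'=\kappa P(P+1)$, pinning $\kappa/2\in\mathbb{N}$ through single-valuedness/analyticity of $P/(1+P)=Cz^{\kappa/2}$. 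Your route is valid (the modulus argument via the order of vanishing at $z=0$ does close the gap you flag), and it has the virtue of producing the closed form $P=rz^{\ell^*}/(1-rz^{\ell^*})$ in one stroke; the paper's induction is more elementary, avoids the complex-analytic bookkeeping (isolated zeros, slit-disk logarithms, legitimacy of separation of variables), and extracts the quantization $\kappa\in 2\mathbb{N}$ immediately from the equation for the first nonzero mode. Your explicit treatment of ``no further bifurcation points around any stationary solution'' (neighbourhoods of nontrivial solutions meet only their own vertical arc, since arcs at distinct $2\ell'$ are isolated in $\kappa$) is a slightly more detailed write-up of what the paper leaves implicit in its complete classification, and your remark that $R_{\ell^*}(W_{Poi})=0$ is consistent with the vertical (critical) branch is a nice sanity check.
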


\begin{figure}[htbp]
    \centering
    \begin{subfigure}[b]{0.7\textwidth}
        \centering
        \includegraphics[width=\textwidth]{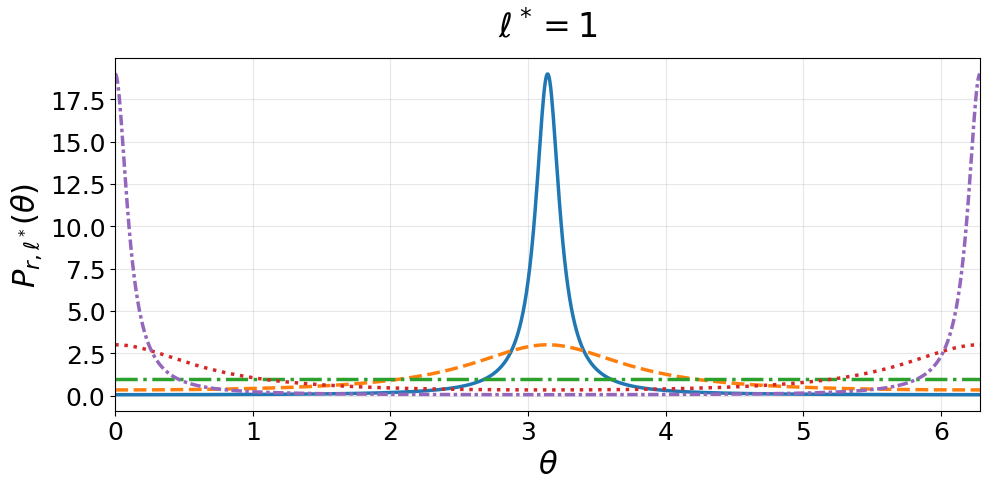}
    \end{subfigure}
    \\
    \begin{subfigure}[b]{0.7\textwidth}
        \centering
        \includegraphics[width=\textwidth]{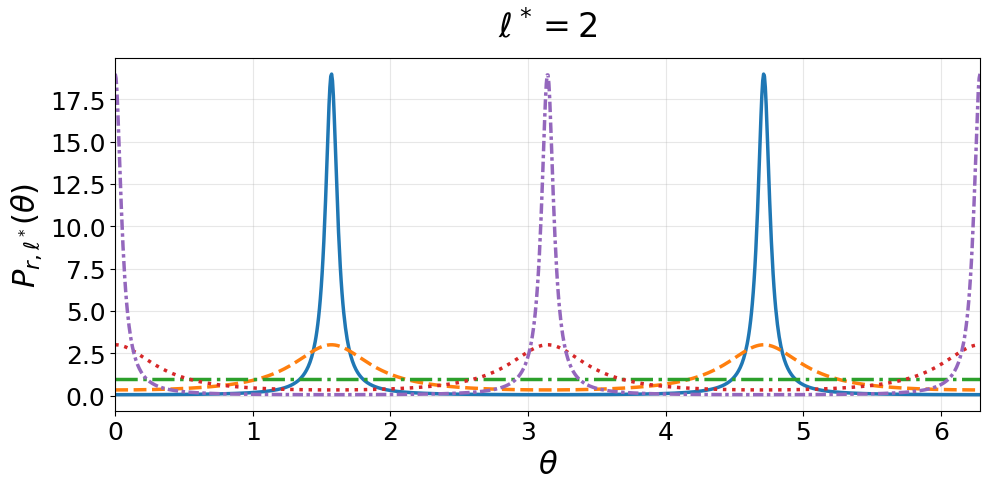}
    \end{subfigure}
    \\
    \begin{subfigure}[b]{0.7\textwidth}
        \centering
        \includegraphics[width=\textwidth]{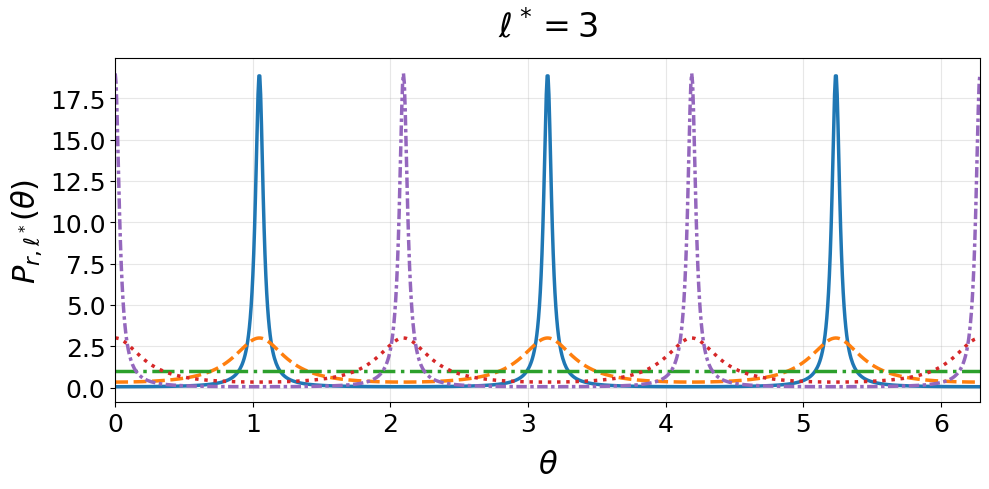}
    \end{subfigure}
    \begin{subfigure}[b]{0.7\textwidth}
    \hspace{0.25in}\includegraphics[scale=0.5]{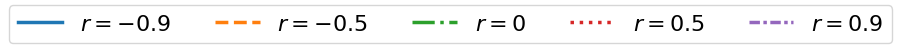}
    \end{subfigure}
    \caption{Visualization of the Poisson kernel in~\cref{eq:PS} for various wave numbers.}
    \label{fig:poissonkernel}
\end{figure}

The Poisson kernel $P_{r,\ell}(\theta)$ in~\cref{eq:PS} is a classical object in harmonic analysis, representing the density of the harmonic measure on the unit circle corresponding to the interior point $re^{i\ell\theta}$ of the unit disk (\citet[Section 11.5]{rudin1987real}). Equivalently, it describes how harmonic functions in the disk can be reconstructed from their boundary data via the Poisson integral. In the present context, the appearance of $P_{r,\ell}$ as a stationary solution of the nonlinear mean-field equation reveals an intriguing connection between collective stochastic dynamics and harmonic analytic structures. Indeed, for the logarithmic sine potential $W_{Poi}$, the stationary McKean-Vlasov equation reduces to a nonlinear self-consistency relation for the Fourier modes of the Poisson kernel. Each bifurcation point $\kappa = 2\ell$ thus marks the onset of a new harmonic mode $\cos(\ell\theta)$, and the corresponding branch of stationary measures $\mu_{r,\ell}$ can be viewed as a nonlinear deformation of the harmonic extension problem on the unit disk. 

From the dynamical viewpoint, the limiting regimes $r \uparrow 1$ and $r \downarrow -1$ of $\mu_{r,\ell}$ correspond to extreme ``synchronization'' phases of the interacting system, where the probability mass concentrates on a finite number of evenly spaced points on the circle. The limiting Dirac masses capture perfect phase alignment (for $r \uparrow 1$) and perfect anti-alignment (for $r \downarrow -1$), respectively; see~\cref{fig:poissonkernel}. Hence, the family $\{\mu_{r,\ell}\}_{r \in (-1,1)}$ provides a complete global picture interpolating between the uniform (disordered) state and fully clustered (ordered) configurations. In this sense, the Poisson kernel serves as a canonical bridge between potential theory and the nonlinear geometry of stationary states in McKean-Vlasov dynamics, demonstrating how singular interactions can lead to an infinite cascade of analytically tractable bifurcations. 

\subsection{A global `Kuramoto' branch}
The following result is a simple application of \cref{lem:lemma1} to potentials with finitely many non-zero Fourier coefficients. Stability properties of the associated stationary solutions has been studied recently, both analytically and numerically, by \cite{bertoli2024stability}. Furthermore, \cite{lucia2010exact} and \cite{vukadinovic2023phase} study the special cases of the Onsager model with a finite mode interaction potential and the Hodgkin-Huxley oscillators, respectively.

\begin{theorem}\label{thm:finkur}
    Suppose $W(\theta) = \sum_{\ell=1}^L a_\ell \cos \ell \theta, \ \theta \in [0, 2\pi)$, where $a_1> a_2 >\dots > a_\ell>0$. Then, for $\kappa > 2/a_\ell$, there exists a global `Kuramoto'-type branch of non-trivial solutions given by
    $$
    \pi_\kappa(\theta) := \frac{L}{I_0(\kappa a_\ell r_\kappa)}\exp\left(\kappa a_\ell r_\kappa\cos \ell \theta\right), \quad \theta \in [0, 2\pi),
    $$
    where $r_\kappa$ is the unique positive solution to
    $$
    r_\kappa = \frac{I_1(\kappa a_\ell r_\kappa)}{I_0(\kappa a_\ell r_\kappa)}.
    $$
\end{theorem}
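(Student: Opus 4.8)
The plan is to exhibit the branch explicitly and verify it with the Fourier/fixed-point characterization of \cref{lem:lemma1}, reducing the whole problem to the classical Kuramoto self-consistency relation. The key structural observation is that, since $W$ has Fourier support in $\{1,\dots,L\}$, the only multiple of $L$ carrying a nonzero Fourier coefficient of $W$ is $L$ itself. Hence for any density $p$ whose Fourier modes are supported on $L\mathbb{Z}$ (an $L$-periodic density), the exponent $\kappa\sum_{\ell\ge1}p_\ell a_\ell\cos\ell\theta$ appearing in the fixed-point equation of \cref{lem:lemma1} collapses to the single term $\kappa a_L p_L\cos L\theta$, and everything is driven by one scalar quantity.

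Concretely, I would first introduce the one-parameter ansatz $p_c(\theta)=Z(c)^{-1}\exp(c\cos L\theta)$ with $Z(c)=\int_0^{2\pi}e^{c\cos L\theta}\,\d\theta=2\pi I_0(c)$. Using the Bessel generating identity $e^{c\cos\psi}=I_0(c)+2\sum_{k\ge1}I_k(c)\cos k\psi$ one reads off the Fourier modes of $p_c$: $p_0=1/2$, $p_{kL}=I_k(c)/I_0(c)$ for $k\ge1$, and $p_j=0$ whenever $L\nmid j$ (and $p_c\in H^1_s(\mathbb{S}^1)$ since $I_k(c)$ decays superexponentially in $k$). Feeding these into the quadratic system \eqref{eq:Fid}, or equivalently checking the fixed-point form $p=Z^{-1}\exp(\kappa\sum_\ell p_\ell a_\ell\cos\ell\theta)$ of \cref{lem:lemma1}, the mode-collapse above shows that $p_c$ solves \eqref{eq:stat} precisely when $c=\kappa a_L\,I_1(c)/I_0(c)$; writing $r:=p_L=I_1(c)/I_0(c)$ so that $c=\kappa a_L r$, this is exactly the stated relation $r=I_1(\kappa a_L r)/I_0(\kappa a_L r)$, and then $p_c=\pi_\kappa$ up to the normalizing constant. (An alternative to the substitution is direct verification in \eqref{eq:stat}, noting that $W\star p_c=a_L p_L\cos L\theta$ makes the drift explicit; one may also obtain the local piece near $\kappa=2/a_L$ from \cref{thm:period}(a), whose reduced potential here is $W^{(L)}(\theta)=a_L\cos\theta$, i.e.\ the Kuramoto potential, and then continue it using the Kuramoto classification recalled in \cref{rem:Kur}.)

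It then remains to solve the scalar equation and check it yields a genuine branch. Set $\Phi_\kappa(r):=I_1(\kappa a_L r)/I_0(\kappa a_L r)-r$ on $[0,1)$ and invoke the standard properties of the Bessel ratio $\rho(x):=I_1(x)/I_0(x)$ — strictly increasing, strictly concave, with $\rho(0)=0$, $\rho'(0)=1/2$, and $\rho(x)\to1$ as $x\to\infty$ (precisely the facts already used for the Kuramoto model in \cref{rem:Kur}). Then $\Phi_\kappa(0)=0$, $\Phi_\kappa'(0)=\tfrac12\kappa a_L-1$, $\Phi_\kappa$ is strictly concave, and $\Phi_\kappa(1^-)=\rho(\kappa a_L)-1<0$; hence for $\kappa>2/a_L$ there is a unique root $r_\kappa\in(0,1)$, while for $\kappa\le2/a_L$ the only root is $r=0$ (the uniform solution). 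Strict concavity forces $\Phi_\kappa'(r_\kappa)<0$, so the implicit function theorem makes $\kappa\mapsto r_\kappa$ smooth on $(2/a_L,\infty)$, with $r_\kappa\downarrow0$ as $\kappa\downarrow2/a_L$; thus $\kappa\mapsto\pi_\kappa$ is a genuine non-trivial branch (it is non-uniform since $r_\kappa>0$, hence $c>0$), emanating from the bifurcation point $(\underline0,2/a_L)$ of \cref{thm:CRthm} and defined for \emph{all} $\kappa>2/a_L$ — the asserted global branch.

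The computations are routine; the only real content is the mode-collapse, which is where the finiteness of the Fourier support of $W$ is essential: if $W$ carried a mode at some multiple $kL$ with $k\ge2$, the reduced equation would be a genuinely multi-mode Kuramoto-type problem with no closed form. The remaining care is bookkeeping — fixing the factor-of-two Fourier convention of \cref{lem:lemma1} and the normalization constant in the definition of $\pi_\kappa$, and confirming $\pi_\kappa$ lies in the relevant spaces $H^1_s(\mathbb{S}^1)$ and $\ell^2_w$.
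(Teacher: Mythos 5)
Your proposal is correct, and it is self-contained where the paper's proof is not: the paper simply notes that the reduced potential formed from the Fourier modes of $W$ at multiples of $L$ is the pure Kuramoto potential $W^{(L)}(\theta)=a_L\cos\theta$ (because $a_{kL}=0$ for $k\ge 2$), imports the classical global Kuramoto branch and the existence and uniqueness of $r_\kappa$ from \cite{bertini2010dynamical} and \citet[Section 6.1]{carrillo2020long}, and lifts that branch to $W$ via the dilation mechanism of \cref{thm:period}(a). You instead posit the ansatz $Z^{-1}\exp(c\cos L\theta)$, verify it directly through \cref{lem:lemma1} using the same mode-collapse observation (which is indeed the only real content, and is exactly why the lifted problem is single-mode), and re-derive the scalar self-consistency analysis from properties of $\rho(x)=I_1(x)/I_0(x)$. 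What you gain is independence from \cref{thm:period}(a) and from outside references, together with an explicit uniqueness argument for the root; what you should still supply is a citation or proof of strict concavity of $\rho$ on $(0,\infty)$ (or the weaker sufficient fact that $\rho(x)/x$ is strictly decreasing), since that is the one nontrivial analytic ingredient and is precisely what the paper outsources to the Kuramoto literature. Two bookkeeping points you already flag are worth making precise: if you verify through the quadratic system \eqref{eq:Fid} you will use the recurrence $I_{k-1}(c)-I_{k+1}(c)=\tfrac{2k}{c}I_k(c)$, and that route is covered by the converse stated in \cref{lem:lemma1}, whereas quoting only the fixed-point identity there requires its (easy but unstated) converse, which your fallback of substituting directly into \eqref{eq:stat} closes; and under the normalization $\tfrac{1}{2\pi}\int_0^{2\pi}p=1$ the constant is $1/I_0(\kappa a_L r_\kappa)$ for every $L$, so the factor $L$ in the displayed formula of \cref{thm:finkur} is a typo rather than something your proof needs to reproduce.
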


Theorem~\ref{thm:finkur} thus provides an explicit analytic characterization of the global `Kuramoto'-type branch for interaction potentials with finitely many Fourier modes, extending the classical single-mode theory to any dominant harmonic component.

\subsection{Global free energy minimizing solutions}
The next result collects some observations about the `globally free energy minimizing' solutions of the McKean-Vlasov equation. The proof techniques are quite general and, with natural modifications, the results apply to McKean-Vlasov equations on any compact smooth manifold (e.g. the torus or sphere). Some results in this direction were already obtained in \cite{chayes2010mckean}. These observations paint a global picture of the geometry of such solutions. Define for $\kappa>0$,
\begin{align*}
    m(\kappa) &:= \inf_{\rho \in \mathcal{P}^+_{ac}(\mathbb{S}^1)}\mathcal{F}(\rho, \kappa),\\
    M(\kappa) &:= \{\rho \in \mathcal{P}^+_{ac}(\mathbb{S}^1) : \mathcal{F}(\rho, \kappa) = m(\kappa)\}.
\end{align*}
 Writing $\int$ for $\frac{1}{2\pi}\int_0^{2\pi}$, we define the \emph{interaction energy} of $\rho \in \mathcal{P}^+_{ac}(\mathbb{S}^1)$ as
 $$
 \mathcal{E}(\rho,\rho) := \iint W(\theta - \phi)\rho(\theta)\rho(\phi)\d\theta\d\phi.
 $$
\begin{theorem}\label{thm:global}
    Assume $W$ is continuous and $\|W\|_\infty< \infty$. The minimum energy map $\kappa \mapsto m(\kappa)$ and the set map of minimizers $\kappa \mapsto M(\kappa)$ satisfy the following.
    
    (i) For every $\kappa>0$, $M(\kappa)$ is non-empty and compact in $L^1(\mathbb{S}^1)$. Moreover, if $\kappa_n \to \kappa^\circ$ and $\rho_{\kappa_n} \in M(\kappa_n)$, then there exists a subsequence $\{n_j\}$ such that $\rho_{n_j} \to \rho^\circ \in M(\kappa^\circ)$ in $L^1(\mathbb{S}^1)$ (sequential compactness in $\mathbb{R}_+ \times L^1(\mathbb{S}^1)$).
        
    (ii) The map $\kappa \mapsto m(\kappa)$ is concave and globally Lipschitz. Its left and right derivatives $ m'_{-},  m'_{+}$ exist for all $\kappa>0$. They further satisfy the `envelope identities'
        \begin{align*}
            m'_{-}(\kappa) = -\frac{1}{2}\inf_{\rho \in M(\kappa)}\mathcal{E}(\rho,\rho),\quad
            m'_{+}(\kappa) = -\frac{1}{2}\sup_{\rho \in M(\kappa)}\mathcal{E}(\rho,\rho).
        \end{align*}
The maps $\kappa \mapsto \inf_{\rho \in M(\kappa)}\mathcal{E}(\rho,\rho)$ and $\kappa \mapsto \sup_{\rho \in M(\kappa)}\mathcal{E}(\rho,\rho)$ are non-decreasing in $\kappa$.

(iii) There are at most countably many points of non-differentiability (`kinks') of $\kappa \mapsto m(\kappa)$. At such a point $\kappa$, $M(\kappa)$ has at least two elements (called `coexistence' points) which both minimize the free energy but have different values of the interaction energy. 
\end{theorem}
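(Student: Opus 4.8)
The plan is to treat the three parts in order, each resting on the concavity and Lipschitz structure of $\kappa \mapsto m(\kappa)$, which itself comes from writing $\mathcal{F}(\rho,\kappa) = \mathcal{S}(\rho) - \tfrac{\kappa}{2}\mathcal{E}(\rho,\rho)$ with $\mathcal{S}(\rho) := \int \rho \log \rho$ the (scaled) entropy. For (i), the natural approach is the direct method. Fix $\kappa>0$ and take a minimizing sequence $\rho_n$. Since $\|W\|_\infty < \infty$, the interaction term is bounded, so $\mathcal{S}(\rho_n)$ is bounded above; combined with the lower bound $\mathcal{S}(\rho) \ge 0$ on probability densities (Jensen), this gives a uniform bound on $\int \rho_n \log \rho_n$. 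This is enough for a de la Vall\'ee–Poussin / Dunford–Pettis argument: the family $\{\rho_n\}$ is uniformly integrable, hence weakly sequentially precompact in $L^1(\mathbb{S}^1)$. One then extracts a weak limit $\rho^\circ$, checks $\rho^\circ \in \mathcal{P}_{ac}^+(\mathbb{S}^1)$ (positivity of the density needs the fixed-point representation from \cref{lem:lemma1} applied to any minimizer, since a minimizer is a critical point and hence of Gibbs form $\propto e^{\kappa W \star \rho}$, which is automatically strictly positive when $W$ is bounded), and uses lower semicontinuity of $\mathcal{S}$ under weak $L^1$ convergence together with continuity of $\rho \mapsto \mathcal{E}(\rho,\rho)$ (here $W$ continuous and bounded makes $\mathcal{E}$ weakly continuous) to conclude $\mathcal{F}(\rho^\circ,\kappa) = m(\kappa)$. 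Compactness of $M(\kappa)$ follows by the same argument applied to a sequence inside $M(\kappa)$, and the joint statement $\kappa_n \to \kappa^\circ$ follows because $\mathcal{F}(\rho,\kappa_n) \to \mathcal{F}(\rho,\kappa^\circ)$ uniformly in $\rho$ (the $\kappa$-dependence is affine with bounded slope $-\tfrac12\mathcal{E}(\rho,\rho)$, and $|\mathcal{E}(\rho,\rho)| \le \|W\|_\infty$).

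For (ii): $m(\kappa) = \inf_\rho\{\mathcal{S}(\rho) - \tfrac{\kappa}{2}\mathcal{E}(\rho,\rho)\}$ is an infimum of affine functions of $\kappa$, hence concave; Lipschitz continuity with constant $\tfrac12\|W\|_\infty$ is immediate from $|\mathcal{E}(\rho,\rho)| \le \|W\|_\infty$. A concave finite function on $\mathbb{R}_+$ has one-sided derivatives everywhere, and they are monotone non-increasing. For the envelope identities, the standard Danskin/envelope-theorem computation applies: for $\kappa' > \kappa$ and any $\rho_{\kappa'} \in M(\kappa')$,
\[
m(\kappa') - m(\kappa) \le \mathcal{F}(\rho_{\kappa'},\kappa') - \mathcal{F}(\rho_{\kappa'},\kappa) = -\tfrac12(\kappa'-\kappa)\mathcal{E}(\rho_{\kappa'},\rho_{\kappa'}),
\]
and symmetrically with a minimizer at $\kappa$; dividing by $\kappa'-\kappa$ and sending $\kappa' \downarrow \kappa$, and using the sequential compactness from (i) to control which minimizers are attained in the limit, pins down $m'_+(\kappa) = -\tfrac12\sup_{\rho \in M(\kappa)}\mathcal{E}(\rho,\rho)$; the left derivative is analogous. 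Monotonicity of $\kappa \mapsto \inf/\sup_{\rho\in M(\kappa)}\mathcal{E}(\rho,\rho)$ then follows directly from monotonicity of one-sided derivatives of a concave function, i.e. $m'_- \ge m'_+$ at each point and $m'_+$ non-increasing.

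For (iii): a concave function on an interval is differentiable except at countably many points (its derivative is monotone, hence has at most countably many jumps); that handles the counting. At a kink $\kappa$ we have $m'_-(\kappa) > m'_+(\kappa)$, i.e.
\[
\inf_{\rho \in M(\kappa)}\mathcal{E}(\rho,\rho) < \sup_{\rho \in M(\kappa)}\mathcal{E}(\rho,\rho),
\]
so the infimum and supremum over $M(\kappa)$ are attained by \emph{different} densities (both in $M(\kappa)$ since $M(\kappa)$ is compact and $\mathcal{E}$ continuous, so the inf and sup are attained), which have equal free energy $m(\kappa)$ but distinct interaction energies — exactly the coexistence statement. The main obstacle I anticipate is part (i): getting genuine compactness of minimizers requires both the uniform-integrability bound (hence weak $L^1$ precompactness rather than mere boundedness) and, more delicately, verifying that the weak limit stays in $\mathcal{P}_{ac}^+$ with a \emph{strictly positive} density — weak $L^1$ limits of positive densities are only non-negative in general, so one genuinely needs the Gibbs representation of minimizers (a minimizer solves the Euler–Lagrange equation, hence equals $\tfrac1Z e^{\kappa W\star\rho}$ pointwise, which is bounded below by $\tfrac1Z e^{-\kappa\|W\|_\infty} > 0$) to upgrade non-negativity to strict positivity and to transfer this lower bound through the limit. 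The envelope identities in (ii) are the other place to be careful, since one must argue that the minimizers achieving the one-sided limits of the difference quotients are precisely those achieving the inf/sup of $\mathcal{E}$ over $M(\kappa)$, which is where the sequential compactness of (i) is used in an essential way.
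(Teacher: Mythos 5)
There is a genuine gap in part (i). Your compactness argument (de la Vall\'ee--Poussin / Dunford--Pettis from the entropy bound) delivers only \emph{weak} $L^1$ sequential precompactness, but the theorem asserts compactness of $M(\kappa)$ and convergence $\rho_{n_j}\to\rho^\circ$ in the $L^1(\mathbb{S}^1)$ \emph{norm}. Nothing in your sketch upgrades weak to strong convergence: you invoke the Gibbs fixed-point form $\rho = \tfrac1Z e^{\kappa W\star\rho}$ only to get strict positivity of the limit, not to close this gap. The paper's proof makes exactly this upgrade the nontrivial step: having identified the weak limit $\rho^\circ\in M(\kappa^\circ)$, it uses continuity of $m(\cdot)$ together with $\mathcal{E}(\rho_{n_j},\rho_{n_j})\to\mathcal{E}(\rho^\circ,\rho^\circ)$ to get convergence of the entropies $\int\rho_{n_j}\log\rho_{n_j}\to\int\rho^\circ\log\rho^\circ$, then uses the Euler--Lagrange identity $\log\rho^\circ=\kappa^\circ W\star\rho^\circ+C$ (so $\log\rho^\circ$ is bounded continuous, hence $\int\rho_{n_j}\log\rho^\circ\to\int\rho^\circ\log\rho^\circ$ under weak convergence) to conclude that the relative entropy $R(\rho_{n_j}\,\|\,\rho^\circ)\to 0$, and finally Pinsker's inequality to obtain $\|\rho_{n_j}-\rho^\circ\|_{L^1}\to 0$. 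An alternative repair along your lines would be to exploit the Gibbs form more fully: all minimizers satisfy $e^{-2\kappa\|W\|_\infty}\lesssim\rho\lesssim e^{2\kappa\|W\|_\infty}$ and $\{W\star\rho_n\}$ is equicontinuous (uniform continuity of $W$), so Arzel\`a--Ascoli gives a uniformly convergent subsequence and hence $L^1$ convergence; but some such argument must be supplied, as written the $L^1$ claims of (i) are not proved.

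Two smaller points. First, in your Danskin computation the inequality is attached to the wrong minimizer: with $\rho_{\kappa'}\in M(\kappa')$ one gets $m(\kappa')-m(\kappa)\ \ge\ -\tfrac12(\kappa'-\kappa)\mathcal{E}(\rho_{\kappa'},\rho_{\kappa'})$, while the upper bound comes from a minimizer at $\kappa$; the two-sided structure you describe is correct, so this is a labeling slip rather than a flaw, and your identification of the one-sided derivatives via compactness matches the paper. Second, your route to existence (direct method plus a positivity argument through the Euler--Lagrange equation) differs from the paper, which simply cites the existence result of Chayes--Panferov; your route is viable but the passage from the relaxed problem over nonnegative densities back to $\mathcal{P}_{ac}^+(\mathbb{S}^1)$, i.e.\ deriving the Gibbs form for a possibly vanishing minimizer via one-sided variations, needs more care than the parenthetical you give it. Parts (ii) and (iii) otherwise follow the paper's approach (your derivation of the monotonicity of $\inf/\sup_{M(\kappa)}\mathcal{E}$ directly from monotonicity of the one-sided derivatives of a concave function is a small, legitimate shortcut compared with the paper's citation).
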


\begin{remark}\label{rem:globalpic}
    Suppose $\max_\ell a_\ell>0$. Combining part (iii) above with \citet[Proposition 5.8]{carrillo2020long} we conclude that $m$ has a kink at some point $\kappa_c < 2/\max_\ell a_\ell$ if and only if $\kappa_c$ is a discontinuous transition point. Otherwise, there is a continuous transition at $\kappa = 2/\max_\ell a_\ell$ after which the uniform distribution is no longer a minimizer (see \cite{gates1970van}) and a globally free energy minimizing branch continuously bifurcates from the trivial branch. Nevertheless, there could be subsequent kinks in the path of $\kappa \mapsto m(\kappa)$ for $\kappa > 2/\max_\ell a_\ell$ producing discontinuous transitions with respect to some non-trivial global minimizer(s). 
\end{remark}

\section{Proofs: A Fourier representation and bifurcations}

\begin{proof}[Proof of \cref{lem:lemma1}]
Note that $p_0=1/2$ is a consequence of the fact that $p$ is a probability density, and thus $(1/2\pi) \int_0^{2\pi} p(\theta) \d\theta = 1$. For $\ell \geq 1$, we have
\begin{align*}
p_\ell & = \frac{1}{2\pi} \int_0^{2\pi} \cos \ell\theta p(\theta) \d \theta = -\frac{1}{2\pi\ell} \int_0^{2\pi} \sin \ell\theta \, p'(\theta) \d \theta \\
& = -\frac{1}{2\pi\ell} \int_0^{2\pi} \sin \ell\theta \, \kappa(W\star p)'(\theta) p(\theta )\d \theta \qquad\text{(as $p$ is a solution to~\eqref{eq:stat})}\\
& = -\frac{1}{2\pi\ell} \int_0^{2\pi} \sin\ell\theta \bigg[ -\sum_{j=1}^\infty \kappa j a_j p_j\sin j\theta  \bigg]\bigg[  2\sum_{j=0}^\infty p_j\cos j\theta   \bigg] \d\theta \\
& = \frac{1}{\pi\ell}\sum_{j,k\geq 0} \kappa j a_j p_jp_k \int_0^{2\pi} \sin j\theta \cos k\theta \sin \ell\theta \d\theta \qquad\text{(by taking $a_0=0$)}\\
& = \frac{1}{2\pi\ell}\sum_{j,k\geq 0} \kappa j a_j p_jp_k \int_0^{2\pi} \big[\sin((j+k)\theta) + \sin((j-k)\theta)  \big]  \sin \ell\theta \d\theta \\ 
& = \frac{1}{4\pi\ell}\sum_{j,k\geq 0} \kappa j a_j p_jp_k \int_0^{2\pi} \big[ \cos((j+k-\ell)\theta) - \cos((j+k+\ell)\theta)\\
&\qquad\qquad\qquad + \cos((j-k-\ell)\theta) - \cos((j-k+\ell)\theta)\big]  \d\theta \\ 
& = \frac{1}{2\ell}\sum_{j,k\geq 0} \kappa j a_j p_jp_k \big[  \delta_{j+k-\ell} - \delta_{j+k+\ell} + \delta_{j-k-\ell} - \delta_{j-k+\ell}  \big]\\
& = \frac{1}{2\ell}\sum_{j,k: j+k=\ell} \kappa j a_j p_jp_k + \frac{1}{2\ell}\sum_{j,k: j-k=\ell} \kappa j a_j p_jp_k - \frac{1}{2\ell}\sum_{j,k: j-k=-\ell} \kappa j a_j p_jp_k\\
&=\frac{1}{2\ell} \sum_{j=1}^\ell \kappa j a_j p_j p_{\ell-j} +  \frac{1}{2\ell} \sum_{j=\ell}^\infty \kappa j a_j p_j p_{j-\ell} - \frac{1}{2\ell} \sum_{j=1}^\infty \kappa j a_j p_j p_{j+\ell}.
\end{align*}
The infinite sums above are all absolutely convergent as $p \in H_s^1(\mathbb{S}^1)$ and $\sup_{\ell \ge 1} |a_\ell| < \infty$, which is a consequence of $W \in L^2_s(\mathbb{S}^1)$.
Therefore, we have
\begin{align*}
    2\ell(1-\frac{\kappa a_\ell}{2}) p_\ell & = \kappa \sum_{j=1}^{\ell-1} j a_j p_j p_{\ell-j} + \kappa \sum_{j=\ell+1}^\infty j a_j p_jp_{j-\ell} - \kappa \sum_{j=\ell+1}^\infty (j-\ell) a_{j-\ell} p_{j-\ell} p_j\\
    & = \kappa \sum_{j=1}^{\ell-1} j a_j p_\ell p_{\ell-j} + \kappa \sum_{j=\ell+1}^\infty (ja_j - (j-\ell)a_{j-\ell}) p_jp_{j-\ell}.
\end{align*}
To prove the converse, suppose that $p \in H_s^1(\mathbb{S}^1)$ and its Fourier coefficients satisfy \eqref{eq:Fid} and $p_0=1/2$. Define the function
$$
g(\theta) := p'(\theta) - \kappa(W\star p)'(\theta)p(\theta), \, \theta \in \mathbb{S}^1.
$$
We claim that $g$ is almost surely zero with respect to $\text{Unif}(\mathbb{S}^1)$. Note that $g$ is an odd function, that is, $g(\theta - \pi) = -g(\pi-\theta), \forall \theta \in [0, 2\pi)$. Moreover, by the assumption $p \in H_s^1(\mathbb{S}^1)$, and as $\sup_{\ell \ge 1} |a_\ell| < \infty$, $p(\cdot), p'(\cdot)$ and $(W\star p)'(\cdot) =  -\sum_{j=1}^\infty \kappa j a_j p_j\sin (j\cdot)$ are square integrable. Further, $p \in H_s^1(\mathbb{S}^1)$ implies that $p$ is uniformly bounded on $\mathbb{S}^1$ and, combining this with the previous observation, we obtain the square integrability of $g$. Thus, to prove the claim, it suffices to show that for any odd square integrable function $\phi(\theta) = \sum_{l=1}^\infty c_\ell \sin(l\theta)$, $\int_0^{2\pi} \phi(\theta)g(\theta)d\theta = 0$. This follows from the above calculation, which shows that \eqref{eq:Fid}, along with $p_0=1/2$, is equivalent to $\int_0^{2\pi}\sin \ell \theta \, p'(\theta)\d\theta = \int_0^{2\pi}\sin \ell \theta \, \kappa(W\star p)'(\theta)p(\theta)\d\theta$ for every $\ell \ge 1$.

The positivity and exponential representation of $p$ follows from the observation that 
$$\theta \mapsto p(\theta) e^{-\kappa(W\star p)(\theta)}$$ 
is a constant function (along with the fact that $p$ is a probability density), which is a consequence of $g=0$ almost surely with respect to $\text{Unif}(\mathbb{S}^1)$.

This proves the result. 
\end{proof}

\begin{proof}[Proof of \cref{lem:Freg}]
    For sequences $\underline{u}, \underline{v}$ in $\mathbb{R}^{\mathbb{Z}}$ satisfying $\sum_{j \in \mathbb{Z}} u_j^2 < \infty, \, \sum_{j \in \mathbb{Z}} v_j^2 < \infty$, define the discrete convolution $\underline{u} \star \underline{v}$ with entries given by $\underline{u} \star \underline{v}(\ell) := \sum_{j \in \mathbb{Z}}u_jv_{\ell-j}, \, \ell \in \mathbb{Z}$. For a sequence $\underline{u}$ in $\ell^2$, naturally extend it to a sequence in $\mathbb{R}^{\mathbb{Z}}$ by setting $u_j=0$ for $j \le 0$. Then, using the assumption $\sup_{\ell \ge 1}\ell|a_\ell|<\infty$, we obtain $C>0$ such that for any $\underline{p} \in \ell^2_w$ and $\kappa >0$,
    \begin{align*}
        \|F(\underline{p},\kappa)\|^2_{\ell^2} \le C(1+\kappa^2) \left(\|\underline{p}\|^2_{\ell^2_w} + |\underline{p} \star \underline{p}|_{\ell^2}^2 + \|\underline{p} \star \underline{p}^-\|_{\ell^2}^2\right),
    \end{align*}
    where $p^-_{j} := p_{-j}, \, j \in \mathbb{Z}$. By Young's inequality for convolutions,
    $$
    |\underline{p} \star \underline{p}|_{\ell^2}^2 \le |\underline{p}\|_{\ell^1}^2|\underline{p}\|_{\ell^2}^2
    $$
    where
    $$
    |\underline{p}\|_{\ell^1} := \sum_{j \in \mathbb{N}}|p_j| \le \left(\sum_{j \in \mathbb{N}}(1 + j^2)p_j^2\right)^{1/2}\left(\sum_{j \in \mathbb{N}}\frac{1}{1 + j^2}\right)^{1/2} < \infty
    $$
    applying the Cauchy-Schwartz inequality and the fact that $\underline{p} \in \ell^2_w$. Thus, $|\underline{p} \star \underline{p}|_{\ell^2}^2 < \infty$. A similar argument shows that $\|\underline{p} \star \underline{p}^-\|_{\ell^2}^2< \infty$. Thus, $F$ is a well-defined map from $\ell^2_w \times \mathbb{R}_+$ to $\ell^2$.

    The existence and representation of the Fr\'echet derivatives of $F$ follow upon simply observing that $F$ can be written as a sum $F(\underline{p},\kappa) = L(\underline{p},\kappa) + Q(\underline{p},\kappa)$ where $L(\cdot,\kappa): \ell^2_w \rightarrow \ell^2$ is a bounded linear map and $Q(\cdot,\kappa): \ell^2_w \times \ell^2_w \rightarrow \ell^2$ is a bounded bilinear map, where the boundedness follows similarly as above.
\end{proof}

\subsection{Lyapunov-Schmidt reduction}\label{subsection:LSdesc} The proofs of our results rely on the so-called \emph{Lyapunov-Schmidt reduction} (\citet[Theorem I.2.3]{kielhofer2012bifurcation}) which transforms the problem of characterizing zero sets in infinite-dimensional Banach spaces into essentially a finite-dimensional one by careful applications of the (Banach space version of the) Implicit Function Theorem (\citet[Theorem I.1.1]{kielhofer2012bifurcation}). We first cast our problem in this general set-up, where we mainly follow \cite{kielhofer2012bifurcation}. First, note that $F(\underline{0},\kappa) = 0$ for all $\kappa>0$. Moreover, from \cref{lem:Freg},
\begin{align}\label{eq:derF}
    \left[D_{\underline{p}}F(\underline{0}, \kappa)\underline{h}\right]_\ell = \ell(2-\kappa a_\ell) h_\ell, \quad \ell \in \mathbb{N}.
\end{align}
Suppose, for some $\kappa^* \in \mathbb{R}_+$, there are exactly $k$ Fourier modes $\{a_{\ell_1}, \dots, a_{\ell_k}\}$ such that $a_{\ell_j} = 2/\kappa^*$ for $1 \le j \le k$.
Thus, the dimension of the kernel and co-dimension of the range of $D_{\underline{p}}F(\underline{0}, \kappa^*)$ are both $k$. Moreover, as $D_{\underline{p}}F(\underline{0}, \kappa^*)$ is a diagonal mapping, it is easy to check that its range is closed. Thus, $F$ is a nonlinear Fredholm operator of index zero (\citet[Definition I.2.1]{kielhofer2012bifurcation}). Consequently, there exist closed complements in the Banach spaces $\ell^2_w$ and $\ell^2$ such that
\begin{align*}
    \ell^2_w = N \oplus X_0,\qquad
    \ell^2 = R \oplus Z_0,
\end{align*}
where $N = \operatorname{ker}D_{\underline{p}}F(\underline{0}, \kappa^*) = \operatorname{span}[\{e_{\ell_1}, \dots, e_{\ell_k}\}]:= \{\underline{x} \in \ell_w^2 : x_{\ell} = 0 \ \forall \ \ell \neq \ell_1,\dots, \ell_k\}$ and $R = \operatorname{Ran}D_{\underline{p}}F(\underline{0}, \kappa^*) = \{\underline{x} \in \ell^2 : x_{\ell_j} = 0 \text{ for all } 1 \le j \le k\}$. Let $P$ denote the continuous projection of $\ell^2_w$ onto $N$ along $X_0$ and $Q$ denote the continuous projection of $\ell^2$ onto $Z_0$ along $R$. Then proceeding as in \citet[Theorem I.2.3]{kielhofer2012bifurcation}, the problem
$$
F(\underline{p},\kappa) = 0
$$
can be split into an equivalent system of a `kernel equation' and a `range equation':
\begin{align*}
QF(u + w,\kappa) = 0 \ \text{ and } \ (I-Q) F(u + w,\kappa) = 0,
\end{align*}
where $u = Px$ and $w= (I-P)x$. Using the invertibility of $D_{\underline{p}}F(\underline{0}, \kappa^*)$ on $R$ and the Implicit Function Theorem, we can obtain open sets $\tilde U \subset N$, $\tilde W \subset X_0$, $V \subset \mathbb{R}_+$ and a $C^1$ function $\Psi: \tilde U \times V \rightarrow \tilde W$ satisfying $\Psi(\underline{0},\kappa^*) = 0$ such that
\begin{align*}
    (I-Q) F(u + w,\kappa) = 0 \text{ for } (u,w,\kappa) \in \tilde U \times \tilde W \times V \text{ is equivalent to } w = \Psi(u,\kappa).
\end{align*}
Using this in the kernel equation $QF(u+w,\kappa) = 0$, we thus obtain a neighborhood $U \times V$ of $(\underline{0},\kappa^*)$ and $\tilde{U}$ of $0$ in $N$ such that the problem
$$
F(\underline{p},\kappa) = 0 \qquad (\underline{p},\kappa) \in U \times V
$$
is equivalent to
$$
\Phi(u,\kappa) = 0 \qquad (u,\kappa) \in \tilde{U} \times V,
$$
where $\Phi: \tilde{U} \times V \rightarrow Z_0$ is the $C^1$ function given by
\begin{align}\label{eq:bf}
    \Phi(u,\kappa) = Q F(u + \Psi(u,\kappa), \kappa), 
\end{align}
$\Phi$ satisfies $\Phi(0, \kappa^*)=0$ and is called the \emph{bifurcation function}.

\subsection{Proof of \cref{thm:CRthm}} As in \cite{carrillo2020long}, the proof relies on the Crandall-Rabinowitz Theorem \citep{crandall1971bifurcation}, which can be viewed as a specialization of the Lyapunov-Schmidt setup in the case where the kernel of the derivative map $\operatorname{ker}D_{\underline{p}}F(\underline{0}, \kappa^*)$ is one-dimensional. In this special case, by \citet[Theorem I.5.1 and Equations (I.6.3) and (I.6.11)]{kielhofer2012bifurcation}, we have the following.
\begin{proposition}\label{prop:CRstatement}
    Suppose the hypotheses of \cref{thm:CRthm} hold. Moreover, suppose the following `transversality' condition is satisfied:
    \begin{align}\label{eq:trans}
        D^2_{\underline{p}\kappa}F(\underline{0}, \kappa^*)e_{\ell^*} \notin R.
    \end{align}
    Then, there exists a non-trivial branch of solutions $(\underline{p}(s), \kappa(s)) : (-\delta, \delta) \rightarrow  \ell^2_w \times \mathbb{R}_+$ such that
    \begin{align*}
        \underline{p}(s) = se_{\ell^*} + o(s)
    \end{align*}
    and $\kappa(\cdot)$ is twice differentiable and satisfies $\kappa(0) = \kappa^*$ and
    \begin{align*}
    \kappa'(0) &= -\frac{\langle D^2_{\underline{p}\underline{p}}F(\underline{0}, \kappa^*)[e_{\ell^*},e_{\ell^*}], e_{\ell^*}\rangle}{2\langle D^2_{\underline{p}\kappa}F(\underline{0}, \kappa^*)e_{\ell^*}, e_{\ell^*}\rangle},\\
        \kappa''(0) &= -\frac{\langle D^3_{uuu}\Phi(0, \kappa^*)[e_{\ell^*},e_{\ell^*},e_{\ell^*}], e_{\ell^*}\rangle}{3\langle D^2_{\underline{p}\kappa}F(\underline{0}, \kappa^*)e_{\ell^*}, e_{\ell^*}\rangle}.
    \end{align*}
    Moreover, $(\underline{p}(s), \kappa(s))$ is the only non-trivial solution in a neighborhood of $(\underline{0},\kappa^*)$ in $\ell^2_w \times \mathbb{R}_+$.
\end{proposition}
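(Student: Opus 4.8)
The plan is to verify that the map $F$ satisfies the hypotheses of the Crandall--Rabinowitz bifurcation theorem \citep{crandall1971bifurcation} in the form given by \citet[Theorem I.5.1]{kielhofer2012bifurcation}, together with the amplitude expansions in \citet[Equations (I.6.3) and (I.6.11)]{kielhofer2012bifurcation}, and then to translate the abstract conclusions into the concrete statement by exploiting the Hilbert structure of $\ell^2_w$ and $\ell^2$. By \cref{lem:Freg}, the standing assumption $\sup_{\ell\ge1}\ell|a_\ell|<\infty$ makes $F$ a $C^\infty$ map from $\ell^2_w\times\mathbb{R}_+$ to $\ell^2$, so every derivative appearing in the statement is well defined and the branch produced below inherits $C^\infty$ regularity, in particular twice differentiability. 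By \eqref{eq:derF}, $D_{\underline{p}}F(\underline{0},\kappa^*)$ is the diagonal operator $\underline{h}\mapsto(\ell(2-\kappa^*a_\ell)h_\ell)_{\ell\ge1}$, whose $\ell$th diagonal entry vanishes exactly when $a_\ell=2/\kappa^*=a_{\ell^*}$; under the hypothesis $a_\ell\neq a_{\ell^*}$ for $\ell\neq\ell^*$ this forces $\ell=\ell^*$. Hence $\operatorname{ker}D_{\underline{p}}F(\underline{0},\kappa^*)=\operatorname{span}[e_{\ell^*}]$ is one-dimensional, its range is the closed codimension-one subspace $R=\{\underline{x}\in\ell^2:x_{\ell^*}=0\}$, and $F$ is a nonlinear Fredholm operator of index zero at $(\underline{0},\kappa^*)$; this is precisely the structure recorded in the Lyapunov--Schmidt reduction of \cref{subsection:LSdesc}, with the orthogonal splittings $\ell^2_w=N\oplus X_0$ and $\ell^2=R\oplus Z_0$ and $N=Z_0=\operatorname{span}[e_{\ell^*}]$.

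Next I would invoke the \emph{transversality} hypothesis \eqref{eq:trans} --- which is in fact automatic here, since \cref{lem:Freg} gives $D^2_{\underline{p}\kappa}F(\underline{0},\kappa^*)e_{\ell^*}=-\ell^*a_{\ell^*}e_{\ell^*}$, a nonzero multiple of $e_{\ell^*}$, hence not in $R$ because $a_{\ell^*}>0$; in particular $\langle D^2_{\underline{p}\kappa}F(\underline{0},\kappa^*)e_{\ell^*},e_{\ell^*}\rangle=-\ell^*a_{\ell^*}\neq0$, so the denominators in the asserted formulas do not vanish. With the one-dimensional kernel, the index-zero Fredholm property, and transversality in place, \citet[Theorem I.5.1]{kielhofer2012bifurcation} applies at the interior point $(\underline{0},\kappa^*)$ of $\ell^2_w\times\mathbb{R}_+$ and produces $\delta>0$ and a branch $s\mapsto(\underline{p}(s),\kappa(s))$ with $(\underline{p}(0),\kappa(0))=(\underline{0},\kappa^*)$, $\underline{p}(s)=se_{\ell^*}+o(s)$, $\kappa(\cdot)$ of the same regularity class as $F$ (so twice differentiable), and the local uniqueness statement that, near $(\underline{0},\kappa^*)$, the zero set of $F$ equals the union of the trivial line $\{(\underline{0},\kappa)\}$ with this branch; thus $(\underline{p}(s),\kappa(s))$ is the only non-trivial solution there.

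It then remains to read off the amplitude formulas. Identifying the bounded functional representing $Z_0$ with $\langle\,\cdot\,,e_{\ell^*}\rangle$ (legitimate because $R^\perp=\operatorname{span}[e_{\ell^*}]$ in the standard $\ell^2$ inner product) and the kernel direction with $e_{\ell^*}$, formula (I.6.3) of \citet{kielhofer2012bifurcation} specializes to exactly the asserted expression for $\kappa'(0)$. Under the hypotheses of \cref{thm:CRthm} this quantity vanishes: by \cref{lem:Freg}, $D^2_{\underline{p}\underline{p}}F(\underline{0},\kappa^*)[e_{\ell^*},e_{\ell^*}]$ is a multiple of $e_{2\ell^*}$, so its $e_{\ell^*}$-component is zero. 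That is precisely the situation in which the curvature formula (I.6.11) of \citet{kielhofer2012bifurcation} applies, and, expressed through the bifurcation function $\Phi$ of \eqref{eq:bf} --- which is $C^\infty$, the regularity being inherited from $F$ through $\Psi$ (the implicit function theorem used in \cref{subsection:LSdesc} yielding $\Psi$ of the same class as $F$) --- it gives exactly the asserted expression for $\kappa''(0)$.

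I expect the only genuine obstacle to be \emph{bookkeeping} rather than mathematics: one must ensure that the abstract data in Kielh\"ofer's statements --- the closed complements $X_0,Z_0$, the continuous projections $P,Q$, and the representing functional for $Z_0$ --- are taken to be the orthogonal ones adapted to the Hilbert structures of $\ell^2_w$ and $\ell^2$, so that the inner products in the proposition coincide with the dual pairings used in (I.6.3) and (I.6.11); and one must check that Kielh\"ofer's framework, stated for a nonlinear Fredholm map $F:X\times\mathbb{R}\to Z$ between possibly distinct Banach spaces, applies verbatim to $F:\ell^2_w\times\mathbb{R}_+\to\ell^2$ (it does, $F$ being a $C^\infty$ Fredholm operator of index zero there). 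A further small point, already addressed above, is to justify the passage from (I.6.3) to (I.6.11) by first establishing $\kappa'(0)=0$ under the hypotheses of \cref{thm:CRthm}.
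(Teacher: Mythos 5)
Your proposal is correct and follows essentially the same route as the paper: after observing via \cref{lem:Freg} and \eqref{eq:derF} that $F$ is smooth with one-dimensional kernel $\operatorname{span}[e_{\ell^*}]$ and closed codimension-one range at $(\underline{0},\kappa^*)$, the paper likewise obtains the proposition by specializing \citet[Theorem I.5.1 and Equations (I.6.3), (I.6.11)]{kielhofer2012bifurcation} within the Lyapunov--Schmidt framework of \cref{subsection:LSdesc}. Your additional remarks — that transversality is automatic since $D^2_{\underline{p}\kappa}F(\underline{0},\kappa^*)e_{\ell^*}=-\ell^*a_{\ell^*}e_{\ell^*}\notin R$, and that $\kappa'(0)=0$ must be checked before invoking (I.6.11) — match exactly the verifications the paper performs in the subsequent proof of \cref{thm:CRthm}.
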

Now, we are ready to prove \cref{thm:CRthm}.

\begin{proof}[Proof of \cref{thm:CRthm}]
    By \cref{lem:Freg}, we get the following:
    \begin{align}\label{eq:cur0}
        \langle D^2_{\underline{p}\underline{p}}F(\underline{0}, \kappa^*)[e_{\ell^*},e_{\ell^*}], e_{\ell^*}\rangle = 0, \quad 
        \langle D^2_{\underline{p}\kappa}F(\underline{0}, \kappa^*)e_{\ell^*}, e_{\ell^*}\rangle = - \ell^*a_{\ell^*}.
    \end{align}
    From the last equation, we conclude that the transversality condition is satisfied, and the above two equations give $\kappa'(0)=0$. To compute the curvature $\kappa''(0)$, we leverage \citet[Equation (I.6.9)]{kielhofer2012bifurcation}:
    \begin{align}\label{eq:cur1}
        D^3_{uuu}\Phi(0, \kappa^*)[e_{\ell^*},e_{\ell^*},e_{\ell^*}] &= QD^3_{\underline{p}\underline{p}\underline{p}}F(\underline{0}, \kappa^*)[e_{\ell^*},e_{\ell^*},e_{\ell^*}]\notag\\
        &-3QD^2_{\underline{p}\underline{p}}F(\underline{0}, \kappa^*)[e_{\ell^*},(I-P)\left(D_{\underline{p}}F(\underline{0}, \kappa^*)\right)^{-1}(I-Q)D^2_{\underline{p}\underline{p}}F(\underline{0}, \kappa^*)[e_{\ell^*},e_{\ell^*}]],
    \end{align}
    where $P$ and $Q$ are the projection operators defined earlier. As $F$ is a quadratic function, the first term on the right hand side above is zero. Now we compute the second term.

    From the explicit description of $R$, observe that $[(I-Q)\underline{x}]_\ell = x_\ell \mathbbm{1}_{\{\ell \neq \ell^*\}}$. Thus, we obtain $\left[(I-Q)D^2_{\underline{p}\underline{p}}F(\underline{0}, \kappa^*)[e_{\ell^*},e_{\ell^*}]\right]_{\ell^*}=0$ and for $\ell \neq \ell^*$,
    \begin{align*}
        \left[(I-Q)D^2_{\underline{p}\underline{p}}F(\underline{0}, \kappa^*)[e_{\ell^*},e_{\ell^*}]\right]_{\ell} &= - 2\kappa^* \sum_{j < \ell}  j a_j \mathbbm{1}_{\{j = \ell^*\}} \mathbbm{1}_{\{\ell-j = \ell^*\}}\\
        &\qquad - 2\kappa^* \sum_{j > \ell} (ja_j - (j-\ell)a_{j-\ell})\mathbbm{1}_{\{j = \ell^*\}} \mathbbm{1}_{\{j - \ell = \ell^*\}}.
    \end{align*}
    The only non-zero coordinate above comes from the case $\ell = 2\ell^*$ and $j=\ell^*$ in the first sum and we get
    \begin{align*}
        \left[(I-Q)D^2_{\underline{p}\underline{p}}F(\underline{0}, \kappa^*)[e_{\ell^*},e_{\ell^*}]\right]_{\ell} = -2\kappa^*\ell^*a_{\ell^*}\mathbbm{1}_{\{\ell = 2\ell^*\}}.
    \end{align*}
    Consequently,
    \begin{align*}
        \left[\left(D_{\underline{p}}F(\underline{0}, \kappa^*)\right)^{-1}(I-Q)D^2_{\underline{p}\underline{p}}F(\underline{0}, \kappa^*)[e_{\ell^*},e_{\ell^*}]\right]_{\ell} &= -\frac{2\kappa^*\ell^*a_{\ell^*}}{2\ell^*(2-\kappa^*a_{2\ell^*})}\mathbbm{1}_{\{\ell = 2\ell^*\}}\\
        &= -\frac{\kappa^*a_{\ell^*}}{2-\kappa^*a_{2\ell^*}}\mathbbm{1}_{\{\ell = 2\ell^*\}}.
    \end{align*}
    As the $\ell^*$-th entry of the above is zero and $I-P$ projects onto $X_0$, we obtain from the above
    \begin{align}\label{eq:cur2}
        \underline{\phi} &:= (I-P)\left(D_{\underline{p}}F(\underline{0}, \kappa^*)\right)^{-1}(I-Q)D^2_{\underline{p}\underline{p}}F(\underline{0}, \kappa^*)[e_{\ell^*},e_{\ell^*}]\notag\\
        &= \left(D_{\underline{p}}F(\underline{0}, \kappa^*)\right)^{-1}(I-Q)D^2_{\underline{p}\underline{p}}F(\underline{0}, \kappa^*)[e_{\ell^*},e_{\ell^*}] = -\frac{\kappa^*a_{\ell^*}}{2-\kappa^*a_{2\ell^*}}e_{2\ell^*}.
    \end{align}
    As $Q$ projects along the $e_{\ell^*}$ direction,
    \begin{align*}
        QD^2_{\underline{p}\underline{p}}F(\underline{0}, \kappa^*)[e_{\ell^*},\underline{\phi}] = \left[D^2_{\underline{p}\underline{p}}F(\underline{0}, \kappa^*)[e_{\ell^*},\underline{\phi}]\right]_{\ell^*}e_{\ell^*},
    \end{align*}
    where, using \cref{lem:Freg},
    \begin{align*}
        \left[D^2_{\underline{p}\underline{p}}F(\underline{0}, \kappa^*)[e_{\ell^*},\underline{\phi}]\right]_{\ell^*} &= - \kappa^* \sum_{j < \ell^*}  j a_j (\mathbbm{1}_{\{j = \ell^*\}} \phi_{\ell^*-j} + \phi_j\mathbbm{1}_{\{\ell^*-j = \ell^*\}})\\ 
        &\qquad - \kappa^* \sum_{j > \ell^*} (ja_j - (j-\ell^*)a_{j-\ell^*})(\mathbbm{1}_{\{j = \ell^*\}} \phi_{j - \ell^*} + \phi_j\mathbbm{1}_{\{j - \ell^* = \ell^*\}}).
    \end{align*}
    The only non-zero contribution to the right hand side above comes from taking $j=2\ell^*$ in the second sum, giving
    \begin{align*}
         QD^2_{\underline{p}\underline{p}}F(\underline{0}, \kappa^*)[e_{\ell^*},\underline{\phi}] = -\kappa^*(2\ell^*a_{2\ell^*} - \ell^*a_{\ell^*})\phi_{2\ell^*}\, e_{\ell^*} = \frac{(\kappa^*)^2a_{\ell^*}(2\ell^*a_{2\ell^*} - \ell^*a_{\ell^*})}{2-\kappa^*a_{2\ell^*}} \, e_{\ell^*}.
    \end{align*}
    Using this in \eqref{eq:cur1}, we obtain
    \begin{align}\label{eq:cur3}
        D^3_{uuu}\Phi(0, \kappa^*)[e_{\ell^*},e_{\ell^*},e_{\ell^*}] = -\frac{3(\kappa^*)^2a_{\ell^*}(2\ell^*a_{2\ell^*} - \ell^*a_{\ell^*})}{2-\kappa^*a_{2\ell^*}} \, e_{\ell^*}.
    \end{align}
    Using \eqref{eq:cur0} and \eqref{eq:cur3} in the formula for $\kappa''(0)$ in \cref{prop:CRstatement}, and using $\kappa^* = 2/a_{\ell^*}$, we conclude that
    \begin{align*}
        \kappa''(0) = -\frac{(\kappa^*)^2(2a_{2\ell^*} - a_{\ell^*})}{2-\kappa^*a_{2\ell^*}} = \frac{2(a_{\ell^*} - 2a_{2\ell^*})}{a_{\ell^*}(a_{\ell^*} - a_{2\ell^*})}.
    \end{align*}
    Finally, if $\kappa \neq 2/a_\ell$ for some $\ell \in \mathbb{N}$, then by \eqref{eq:derF}, $D_{\underline{p}}F(\underline{0}, \kappa)$ is invertible. By the Implicit Function Theorem for Banach spaces (\citet[Theorem I.1.1]{kielhofer2012bifurcation}), there exists a neighborhood $U' \times V'$ of $(\underline{0}, \kappa)$ and a continuous mapping $f:V' \rightarrow U'$ such that the zeros of $F$ in $U' \times V'$ form the set $\{(f(\kappa),\kappa) : \kappa \in V'\}$. But this curve clearly has to be $f \equiv 0$ as $F(\underline{0},\kappa') = 0$ for any $\kappa' \in \mathbb{R}_+$. Thus, such a $\kappa$ cannot be a bifurcation point.
\end{proof}

\subsection{Proof of \cref{thm:period}}
\begin{proof}[Proof of \cref{thm:period}(a)]
    It suffices to show that $\{(\underline{p}(s), \kappa(s)): s \in (-\delta, \delta)\}$ as described in the theorem satisfies $F(\underline{p}(s), \kappa(s)) = 0$ for all $s \in (-\delta, \delta)$. Recall
    \begin{align*}
    F_{\ell}(\underline{p}, \kappa) = \ell(2-\kappa a_\ell) p_\ell - \kappa \sum_{j < \ell}  j a_j p_j p_{\ell-j} - \kappa \sum_{j > \ell} (ja_j - (j-\ell)a_{j-\ell})p_jp_{j-\ell}, \ \ell \ge 1.
\end{align*}
If $m \nmid \ell$, then for any $j$ in one of the above sums, either $m \nmid j$ or $m \nmid |\ell - j|$. This gives $F_{\ell}(\underline{p}(s), \kappa(s)) = 0$ for any $\ell \in \mathbb{N}$ such that $m \nmid \ell$. Moreover, writing $a^{(m)}_j := a_{jm}$, we have for any $\ell \in \mathbb{N}$,
\begin{align*}
    F_{\ell m}(\underline{p}(s), \kappa(s)) &= \ell m(2-\kappa (s) a^{(m)}_{\ell}) p_{\ell m}(s) - \kappa(s) \sum_{j < \ell}  j m a^{(m)}_j p_{jm}(s) p_{\ell m-jm}(s)\\
    &\qquad - \kappa(s)\sum_{j > \ell} m(ja^{(m)}_{j} - (j-\ell)a^{(m)}_{j-\ell})\, p_{jm}(s)p_{jm-\ell m}(s)\\
    &=  m\left[\ell(2-\kappa^{(m)}(s) a^{(m)}_{\ell}) p^{(m)}_{\ell}(s) - \kappa^{(m)}(s) \sum_{j < \ell}  j a^{(m)}_j p^{(m)}_{j}(s) p^{(m)}_{\ell-j}(s)\right.\\
    &\left. \qquad - \kappa^{(m)}(s)\sum_{j > \ell} (ja^{(m)}_{j} - (j-\ell)a^{(m)}_{j-\ell}) \, p^{(m)}_{j}(s)p^{(m)}_{j-\ell}(s)\right] = 0,
\end{align*}
as $\{(\underline{p}^{(m)}(s), \kappa^{(m)}(s)): s \in (-\delta, \delta)\}$ is the non-trivial solution branch at $2/a_m$ for the stationary equation \eqref{eq:stat} with potential $W^{(m)}$. This proves the result.
\end{proof}

To prove \cref{thm:period}(b), we need the following lemmas. In the following, we suppress the dependence of the solution on $\kappa$ and write $\{\chi_\ell : \ell \in \mathbb{N}\}$ for the Fourier coefficients of a generic solution to \eqref{eq:stat} for a prescribed value of $\kappa$. We also denote the $\ell^2$-norm by $\|\underline{\chi}\| := \sqrt{\sum_{j=1}^{\infty}\chi_j^2}$ and the $\ell_w^2$-norm by $\|\underline{\chi}\|_w := \sqrt{\sum_{j=1}^{\infty} (1 + j^2)\chi_j^2}$.

Let $S \subset \mathbb{N}$ be a finite set such that $a_\ell = a>0$ for all $\ell \in S$ and $a_\ell \neq a$ for all $\ell \notin S$. Let $P_{S}$ (respectively $P_{S^c}$) denote the projection onto the coordinates in $S$ (respectively $S^c$).

From the Lyapunov-Schmidt reduction described in \cref{subsection:LSdesc}, observe that for $\eta, \delta>0$  sufficiently small, $(\underline{\chi},\kappa)$ solving $F(\underline{\chi},\kappa)=0$, namely,
\begin{align}\label{eq:chieq}
\ell(2-\kappa a_\ell) \chi_\ell = \kappa \sum_{j < \ell} j a_j \chi_j \chi_{\ell-j} + \kappa \sum_{j > \ell} (j a_j - (j-\ell)a_{j-\ell} ) \chi_j \chi_{j-\ell}, \quad \ell\in\mathbb{N},
\end{align} 
with $\|\underline{\chi}\|_w \le \eta$ and $\kappa \in \left(\frac{2}{a} - \delta, \frac{2}{a} + \delta\right)$, is equivalent to
$$
\underline{w} = \Psi(\underline{u}, \kappa) \ \ \text{and} \ \ F_\ell(\underline{u} + \underline{w}, \kappa) = 0 \ \forall \ \ell \in S,
$$
where $\Psi: P_S\underline{\chi} \times \mathbb{R}_+ \rightarrow P_{S^c}\underline{\chi}$ is a $C^1$ function with $\Psi(\underline{0},2/a) = 0$ and $\underline{u} = P_S\underline{\chi}$, $\underline{w} = P_{S^c}\underline{\chi}$. The next lemma controls $\|\underline{w}\|_w$ in terms of $\|\underline{u}\|_w$ and will be used several times in the sequel.
\begin{lemma}\label{lem:lemma2}
Assume $\sup_{\ell \ge 1} \ell |a_\ell|< \infty$. In the Lyapunov-Schmidt reduction setup above, there exist $\eta, \delta>0$ and $C>0$ such that if $(\underline{\chi},\kappa)$ solves \eqref{eq:chieq} with $\|\underline{\chi}\|_w \le \eta$ and $\kappa \in \left(\frac{2}{a} - \delta, \frac{2}{a} + \delta\right)$, then
$$
\|\underline{w}\|_w \le C \|\underline{u}\|_w^2.
$$
\end{lemma}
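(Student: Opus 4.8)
The plan is to exploit the affine--quadratic structure of $F$ together with a spectral gap for its linear part away from the kernel, so that the equation for $\underline{w}$ has a source term that is \emph{purely quadratic} in $\underline{\chi}$. Write $F(\underline{p},\kappa)=A(\kappa)\underline{p}+B_\kappa(\underline{p},\underline{p})$, where $A(\kappa)$ is the diagonal operator $[A(\kappa)\underline{p}]_\ell=\ell(2-\kappa a_\ell)p_\ell$ and $B_\kappa(\underline{h},\underline{k}):=\tfrac12 D^2_{\underline{p}\underline{p}}F(\,\cdot\,,\kappa)[\underline{h},\underline{k}]$ is the symmetric bilinear part, which (since $F$ is quadratic in $\underline{p}$) does not depend on the base point; a direct check against the formulas in \cref{lem:Freg} confirms $F=A(\kappa)+B_\kappa(\,\cdot\,,\,\cdot\,)$, and \cref{lem:Freg} gives that $B_\kappa:\ell^2_w\times\ell^2_w\to\ell^2$ is bounded with $\|B_\kappa\|\le M$ uniformly for $\kappa$ in the compact interval $[\tfrac2a-\delta,\tfrac2a+\delta]$. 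A routine estimate --- using that $a_\ell\to0$ (a consequence of $\sup_\ell \ell|a_\ell|<\infty$), that $S$ is finite, and that $a_\ell\neq a$ for $\ell\notin S$ --- produces $c_0>0$ and $\delta>0$ such that $|2-\kappa a_\ell|\ge c_0$ for all $\ell\in S^c$ and all $\kappa\in(\tfrac2a-\delta,\tfrac2a+\delta)$. Since then $|[A(\kappa)]_{\ell\ell}|=\ell|2-\kappa a_\ell|\ge c_0\ell$ on $S^c$, the restriction $A(\kappa)|_{S^c}$ is boundedly invertible from the subspace of $\ell^2$ supported on $S^c$ into $\ell^2_w$, with
$$
\big\|A(\kappa)|_{S^c}^{-1}\underline{y}\big\|_w^2=\sum_{\ell\in S^c}\frac{1+\ell^2}{\ell^2(2-\kappa a_\ell)^2}\,y_\ell^2\le \frac{2}{c_0^2}\,\|\underline{y}\|^2 ,
$$
so $\|A(\kappa)|_{S^c}^{-1}\|_{\ell^2\to\ell^2_w}\le \sqrt2/c_0$ uniformly in $\kappa$.

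Next I would read off the range equation directly from the hypothesis. If $(\underline{\chi},\kappa)$ solves \cref{eq:chieq}, i.e. $A(\kappa)\underline{\chi}+B_\kappa(\underline{\chi},\underline{\chi})=\underline{0}$, then applying $P_{S^c}$ and using that the diagonal operator $A(\kappa)$ commutes with $P_{S^c}$ gives $A(\kappa)|_{S^c}\,\underline{w}=-P_{S^c}B_\kappa(\underline{\chi},\underline{\chi})$, hence
$$
\underline{w}=-A(\kappa)|_{S^c}^{-1}P_{S^c}B_\kappa(\underline{\chi},\underline{\chi}).
$$
This is precisely the range equation $\underline{w}=\Psi(\underline{u},\kappa)$ of the Lyapunov--Schmidt reduction, now written so that its right-hand side is manifestly quadratic. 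Taking $\|\cdot\|_w$ and using the two bounds above,
$$
\|\underline{w}\|_w\le \frac{\sqrt2}{c_0}\,\|P_{S^c}B_\kappa(\underline{\chi},\underline{\chi})\|\le \frac{\sqrt2\,M}{c_0}\,\|\underline{\chi}\|_w^2=:C_1\|\underline{\chi}\|_w^2 .
$$
Since $\underline{u}=P_S\underline{\chi}$ and $\underline{w}=P_{S^c}\underline{\chi}$ have disjoint supports, $\|\underline{\chi}\|_w^2=\|\underline{u}\|_w^2+\|\underline{w}\|_w^2$, so $\|\underline{w}\|_w\le C_1\|\underline{u}\|_w^2+C_1\|\underline{w}\|_w^2$. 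Finally, shrinking $\eta$ so that $C_1\eta\le\tfrac12$, the a priori bound $\|\underline{w}\|_w\le\|\underline{\chi}\|_w\le\eta$ forces $C_1\|\underline{w}\|_w^2\le\tfrac12\|\underline{w}\|_w$; absorbing this term yields $\|\underline{w}\|_w\le 2C_1\|\underline{u}\|_w^2$, which is the claim with $C=2C_1$.

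The only genuinely delicate point is the uniform spectral gap $|2-\kappa a_\ell|\ge c_0$ over the \emph{infinite} index set $S^c$. I would establish it by splitting $S^c$ into the finitely many $\ell$ where $|a_\ell|\ge a/2$ --- for which $|2-\kappa a_\ell|$ is bounded below because each such $a_\ell\neq a$ and $\kappa$ is near $2/a$, and there are only finitely many of them --- and the cofinite set where $|a_\ell|<a/2$, for which $|2-\kappa a_\ell|\ge 2-(\tfrac2a+\delta)\tfrac a2$ is bounded below for $\delta$ small. Everything else is bookkeeping; in particular the argument uses neither the $C^1$ regularity of $\Psi$ nor any finer property of the reduction --- the quadratic (rather than merely Lipschitz) control of $\underline{w}$ by $\underline{u}$ comes for free once one observes that, after inverting $A(\kappa)|_{S^c}$, the remaining source term is purely quadratic in $\underline{\chi}=\underline{u}+\underline{w}$ and the $\underline{w}$-dependent contributions are absorbable by smallness of $\eta$.
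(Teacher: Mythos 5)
Your proof is correct and follows essentially the same route as the paper's: a uniform lower bound on $|2-\kappa a_\ell|$ over $\ell \in S^c$ for $\kappa$ near $2/a$, the observation that the $S^c$-coordinates are determined by an expression quadratic in $\underline{\chi}$ (bounded via \cref{lem:Freg}), and absorption of the resulting $C\|\underline{w}\|_w^2$ term after splitting $\|\underline{\chi}\|_w^2=\|\underline{u}\|_w^2+\|\underline{w}\|_w^2$. The only (harmless, arguably cleaner) deviation is in the absorption step, where you use the a priori bound $\|\underline{w}\|_w\le\|\underline{\chi}\|_w\le\eta$ directly, whereas the paper invokes the continuity of $\Psi$ with $\Psi(\underline{0},2/a)=0$ to make $C\|\underline{w}\|_w<1/2$.
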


\begin{proof}[Proof of \cref{lem:lemma2}]
Note that, as $a_\ell \rightarrow 0$ as $\ell \to \infty$, for $\delta>0$ small enough,
$$
\nu := \inf_{j \in S^c}|2 - \kappa a| >0 \ \forall \ \kappa \in \left(\frac{2}{a} - \delta, \frac{2}{a} + \delta\right).
$$
Proceeding as in the proof of \cref{lem:Freg}, we obtain $C', C''>0$ such that for any $(\underline{\chi}, \kappa)$ solving \eqref{eq:chieq} with $\|\underline{\chi}\|_w \le \eta$ and $\kappa \in \left(\frac{2}{a} - \delta, \frac{2}{a} + \delta\right)$,
    \begin{align*}
        \|\underline{w}\|_w  \le \|\underline{\chi}\|_w &\le C'\nu^{-1}\kappa \left(|\underline{\chi} \star \underline{\chi}|_{\ell^2}^2 + \|\underline{\chi} \star \underline{\chi}^-\|_{\ell^2}^2\right)^{1/2}\\
        &\le C'' \|\underline{\chi}\|_w^2\\
        &= C''\|\underline{u}\|_w^2 + C''\|\underline{w}\|_w^2,
    \end{align*}
    where $\chi^-_{j} := \chi_{-j}, \, j \in \mathbb{Z}$. Moreover, as $\underline{w} = \Psi(\underline{u}, \kappa)$, $\Psi$ is continuous and $\Psi(\underline{0},2/a) = 0$, choosing $\delta, \eta$ small enough,
$$
C''\|\underline{w}\|_w < 1/2 \ \ \forall \ \ \|\underline{\chi}\|_w \le \eta, \kappa \in \left(\frac{2}{a} - \delta, \frac{2}{a} + \delta\right).
$$
From the above, we conclude
$$
\|\underline{w}\|_w \le 2C'' \|\underline{u}\|_w^2,
$$
which completes the proof of the lemma.
\end{proof}

\begin{lemma}\label{lem:lemma4}
Suppose $\sum_{j\ge 1}j^2 a_j^2< \infty$, $\{\chi_\ell:\ell \in \mathbb{N} \}$ satisfies \eqref{eq:chieq} and $\|\underline{\chi}\| <\infty$.
Assume there exists $m \in \mathbb{N}$ such that $a_m>0$ and, writing (the necessarily finite set) $S:= \{\ell \in \mathbb{N} : a_\ell = a_m\}$, we have
$$
\operatorname{gcd}(S) = g>1.
$$
Moreover, assume that there exists $C>0$ such that $|\chi_\ell| \leq C\sum_{k\in S}|\chi_k|, \forall \ell \in \mathbb{N}$. Then there exist $\delta'>0, \eta'>0$ such that if
\begin{align*}
   \kappa \in \left[\frac{2}{a_m} - \delta', \frac{2}{a_m} + \delta'\right] \ \text{ and } \ \sum_{k\in S}|\chi_k| \leq \eta',
\end{align*}
we must necessarily have $\chi_\ell = 0$, $\forall \ell\in\mathbb{N}$ such that $g \nmid \ell$. 
\end{lemma}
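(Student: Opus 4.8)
The plan is to prove that the off-lattice sum $\Sigma := \sum_{\ell\,:\,g\nmid\ell}|\chi_\ell|$ obeys an inequality $\Sigma \le \varepsilon\,\Sigma$ with $\varepsilon < 1$, which forces $\Sigma = 0$, i.e. $\chi_\ell = 0$ whenever $g\nmid\ell$. Write $q := \sum_{k\in S}|\chi_k|$, $c_j := j a_j$, and note that the standing hypothesis $\sum_j j^2 a_j^2<\infty$ gives both $\|\underline{c}\| := (\sum_j c_j^2)^{1/2}<\infty$ and $\|\underline{c}\|_\infty := \sup_j |c_j|<\infty$; write also $\|\underline{\chi}\|_1 := \sum_{\ell\ge1}|\chi_\ell|$ and let $R_\ell(\underline{\chi},\kappa)$ denote the right-hand side of \eqref{eq:chieq}, so that \eqref{eq:chieq} reads $\ell(2-\kappa a_\ell)\chi_\ell = R_\ell(\underline{\chi},\kappa)$. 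The arithmetic backbone is: since $g = \operatorname{gcd}(S)$ we have $S\subseteq g\mathbb{Z}$, hence $\{\ell:g\nmid\ell\}\subseteq S^c$; and if $g\nmid\ell$, then in each of the two sums defining $R_\ell$ every index pair $(j,\ell-j)$, resp. $(j,j-\ell)$, has at least one member not divisible by $g$, because $g\mid j$ together with $g\mid(\ell-j)$, resp. $g\mid(j-\ell)$, would force $g\mid\ell$.

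The first step is a preliminary observation that the hypotheses already force $\underline{\chi}\in\ell^2_w$, hence $\underline{\chi}\in\ell^1$. Since $a_\ell\to0$, $|2-\kappa a_\ell|\ge1$ for all but finitely many $\ell$, so $\ell|\chi_\ell|\le|R_\ell|$ outside a finite set; and $\|\underline{R}\|:=(\sum_\ell|R_\ell|^2)^{1/2}<\infty$ by Young's inequality for discrete convolutions, since each bilinear term in $R_\ell$ is, up to reflecting one index, a convolution of $(c_j\chi_j)_j$---which lies in $\ell^1$ by Cauchy--Schwarz, $\sum_j|c_j||\chi_j|\le\|\underline{c}\|\,\|\underline{\chi}\|$---against $\underline{\chi}\in\ell^2$. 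Therefore $\sum_\ell\ell^2\chi_\ell^2<\infty$, so $\underline{\chi}\in\ell^2_w\subset\ell^1$. I then fix $\delta'>0$ small enough that $\nu := \inf\bigl\{|2-\kappa a_\ell|:\ell\in S^c,\ \kappa\in[2/a_m-\delta',2/a_m+\delta']\bigr\}>0$, which is possible because $a_\ell\neq a_m$ on $S^c$, only finitely many $a_\ell$ lie near $a_m$, and the rest are uniformly small.

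The second step is a bootstrap giving $\|\underline{\chi}\|_1\le 2q$ once $q$ is small. For $\ell\in S^c$ one has $|\chi_\ell|\le(\nu\ell)^{-1}|R_\ell|$, so by Cauchy--Schwarz (against $\sum_\ell\ell^{-2}<\infty$), $\sum_{\ell\in S^c}|\chi_\ell|\le C'\nu^{-1}\|\underline{R}\|$. Now estimate $\|\underline{R}\|$ by bounding, in each bilinear term, the $\ell^2$-norm of the factor carrying the coefficient $c_j$ via the hypothesis $|\chi_\ell|\le Cq$---namely $\|(c_j\chi_j)_j\|\le\|\underline{\chi}\|_\infty\|\underline{c}\|\le Cq\|\underline{c}\|$---while keeping the other factor in $\ell^1$; Young's inequality then yields $\|\underline{R}\|\le C_1\,q\,\|\underline{\chi}\|_1$, with $C_1$ depending only on the (bounded) range of $\kappa$, on $C$, and on $\|\underline{c}\|$. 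Since $\|\underline{\chi}\|_1 = q + \sum_{\ell\in S^c}|\chi_\ell|\le q + C'C_1\nu^{-1}\,q\,\|\underline{\chi}\|_1$ and $\|\underline{\chi}\|_1<\infty$, choosing $\eta'$ so small that the coefficient of $\|\underline{\chi}\|_1$ is $<\tfrac12$ gives $\|\underline{\chi}\|_1\le 2q$ for $q\le\eta'$. Finally, for $g\nmid\ell$ (hence $\ell\in S^c$) one has $|\chi_\ell|\le\nu^{-1}|R_\ell|$; summing over such $\ell$ and invoking the arithmetic backbone, every term of $\sum_{g\nmid\ell}|R_\ell|$ carries a factor $|\chi_j|$ (or $|\chi_k|$) with index not divisible by $g$, so after reorganizing the two double sums one gets $\sum_{g\nmid\ell}|R_\ell|\le C_2\,\kappa\,\|\underline{c}\|_\infty\,\|\underline{\chi}\|_1\,\Sigma$---the key point being that $\sum_{g\nmid j}|c_j\chi_j|\le\|\underline{c}\|_\infty\,\Sigma$ is \emph{linear} in $\Sigma$, precisely because $\underline{c}$ is bounded. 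Combining with $\|\underline{\chi}\|_1\le 2q$ gives $\Sigma\le 2C_2\nu^{-1}\kappa\|\underline{c}\|_\infty\,q\,\Sigma$, and shrinking $\eta'$ once more makes the prefactor $<1$, whence $\Sigma=0$.

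I expect the bootstrap (second step) to be the main obstacle: one must extract the small factor $q$ from the \emph{quadratic} right-hand side of \eqref{eq:chieq} without destroying summability. Applying $|\chi_\ell|\le Cq$ to \emph{both} factors of a bilinear term, or to a factor that then sums to a divergent harmonic series, fails; the trick is to bound exactly one convolution factor by $Cq$ in $\ell^2$, keep the other inside $\ell^1$ (or $\ell^2$), and retain the weight $\ell^{-1}$ furnished by the linear term $\ell(2-\kappa a_\ell)\chi_\ell$. The parallel care in the closing step---bounding the $c$-weighted factor in $\ell^\infty$---is what makes the final estimate linear rather than merely sublinear in $\Sigma$, which is exactly what turns $\Sigma = O(q)$ into $\Sigma = 0$.
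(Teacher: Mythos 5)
Your argument is correct, and it rests on the same two pillars as the paper's proof: the arithmetic fact that $g\nmid\ell$ forbids both indices of any quadratic term from lying on the lattice $g\mathbb{Z}\supseteq S$, and the linearization of the quadratic right-hand side via the hypothesis $|\chi_\ell|\le C\sum_{k\in S}|\chi_k|$, turned into a contraction when $q:=\sum_{k\in S}|\chi_k|$ is small, with the uniform gap $|2-\kappa a_\ell|\ge\nu$ off $S$. The implementation, however, differs from the paper's. The paper never leaves $\ell^2$: it writes the off-lattice vector $\underline z=(|\chi_\ell|:g\nmid\ell)$, derives the entrywise inequality $\underline z\le M\underline z$ with an explicit infinite matrix $M$ whose entries carry the factor $q$, and kills $\underline z$ by a Hilbert--Schmidt norm bound, $\|M\|^2\lesssim C^2\gamma^{-2}q^2\bigl(\sum_i i^{-2}\bigr)\bigl(\sum_j j^2a_j^2\bigr)$, exploiting the weight $1/\ell$ supplied by the linear term; no $\ell^1$ information about $\underline\chi$ is needed beyond $\|\underline\chi\|<\infty$. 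You instead first upgrade regularity ($\underline\chi\in\ell^2_w\subset\ell^1$, using Young's inequality and $|2-\kappa a_\ell|\ge1$ for large $\ell$), then bootstrap $\|\underline\chi\|_{\ell^1}\le 2q$, and close with an $\ell^1$ contraction $\Sigma\le\mathrm{const}\cdot\nu^{-1}\kappa\,\|\underline c\|_\infty\,q\,\Sigma$ on the off-lattice sum. This costs you two extra steps the paper avoids, but it buys a more elementary closing estimate (only $\sup_j j|a_j|$ and $\ell^1$ bookkeeping, no operator norms), and your preliminary step even yields the additional information $\underline\chi\in\ell^2_w$. Two points you handled that are genuinely load-bearing and worth keeping explicit in a write-up: the a priori finiteness of $\|\underline\chi\|_{\ell^1}$ (without it the bootstrap inequality cannot be rearranged), and the placement of the smallness factor on exactly one convolution factor per bilinear term — including the reflected/correlation terms, where the Minkowski-type estimate must be applied with the $c$-weighted factor measured in $\ell^2$ — which is what keeps both your bootstrap and your final bound linear in the quantity you want to control.
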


\begin{proof}[Proof of \cref{lem:lemma4}]
For $\ell \in \mathbb{N}$ with $g \nmid \ell$, 
\begin{align}\label{eq:temp1}
    |\ell(2-\kappa a_\ell) \chi_\ell| \leq \kappa \sum_{j<\ell} j|a_j| |\chi_j| |\chi_{\ell-j}| + \kappa \sum_{j>\ell} \left(j|a_j| + (j-\ell)|a_{j-\ell}|\right) |\chi_j| |\chi_{j-\ell}|. 
\end{align}
Write $\underline{z}\coloneqq (|\chi_\ell|: g \nmid \ell)$. If $g \nmid \ell$, then for any $j \in \mathbb{N}$, $g$ cannot simultaneously divide both $j$ and $\ell-j$ (or equivalently, $j$  and $j-\ell)$. Hence, from~\eqref{eq:temp1}, we obtain $\underline{z} \leq M \underline{z}$ (entrywise) where for $i,j$ such that $g \nmid i$, $g \nmid j$, 
\begin{align*}
    M_{ij} &\coloneqq \frac{\kappa C}{i|2-\kappa a_i|}\Big((2j|a_j| + (i-j)|a_{i-j}| + (i+j)|a_{i+j}|)\mathbbm{1}_{\{ j <i\}}\\
    & \qquad + (2j|a_j| + (j-i)|a_{j-i}| + (i+j)|a_{i+j}|)\mathbbm{1}_{\{ j >i\}}\Big) \sum_{k\in S}|\chi_k|\\
    &= \frac{C}{i|\frac{2}{\kappa}- a_i|}|\left(2j|a_j| + |i-j||a_{|i-j|}| + (i+j)|a_{i+j}|\right)\sum_{k\in S}|\chi_k|.
\end{align*}
Note that if $g \nmid j$, then $a_j \neq a_m$. Thus, as $a_n \to 0$ as $n \to \infty$ and $a_m>0$, $\inf_{j: g \nmid j}|a_j - a_m|>0$. Hence, there exist $\delta'>0, \gamma>0$ such that for all $i$ with $g \nmid i$,
$$
\left|\frac{2}{\kappa}- a_i\right| \ge \gamma \ \text{ for all} \ \kappa \in \left[\frac{2}{a_m} - \delta', \frac{2}{a_m} + \delta'\right].
$$
From the above observations and Cauchy-Schwarz inequality, $\|\underline{z}\| \le \|M\|\|\underline{z}\|$, where
\begin{align*}
    \|M\|^2 & = \sum_{i,j: g \nmid i,\, g \nmid j} M^2_{ij} \\
    & \leq 21 C^2\gamma^{-2} \left(\sum_{k\in S}|\chi_k|\right)^2\sum_{i=1}^\infty \frac{1}{i^2} \big( \sum_{j=1}^\infty j^2a_j^2\big).
\end{align*}
Thus, we conclude that if
\begin{align*}
    \sum_{k\in S}|\chi_k| \leq \frac{1}{\sqrt{42 C^2 \gamma^{-2} \sum_i \frac{1}{i^2} \sum_j j^2 a_j^2}},
\end{align*}
then $\|M\|^2\leq 1/2$ and consequently $\underline{z}=0$, thereby proving the desired result.

\end{proof}

\begin{proof}[Proof of \cref{thm:period}(b)]
   Recall $S:= \{\ell \in \mathbb{N} : a_\ell = a_m\}$. Consider any non-trivial branch of solutions $(\underline{p}(s), \kappa(s)) : (-\delta, \delta) \rightarrow  \ell^2_w \times \mathbb{R}_+$ at bifurcation point $2/a_m$. 
   Using \cref{lem:lemma2}, we obtain $C_1>0, \delta_1>0$ such that 
\begin{align*}
    |p_\ell(s)| \leq C_1\sum_{j \in S} |p_j(s)|, \ \forall \ell \in \mathbb{N}, \ s \in (-\delta_1, \delta_1).
\end{align*}
By choosing $\eta', \delta'$ as in \cref{lem:lemma4} with $C_1$ in place of $C$, we obtain $\delta_2 \in (0,\delta_1]$ such that
$$
\kappa(s) \in \left[\frac{2}{a_m} - \delta', \frac{2}{a_m} + \delta'\right] \ \text{ and } \ \sum_{j\in S} |p_j(s)| \le \eta'\ \forall s \in (-\delta_2, \delta_2).
$$
By \cref{lem:lemma4}, the result follows.
\end{proof}

\subsection{Proof of \cref{thm:high}} 
The proof depends on making the Lyapunov-Schmidt reduction described in \cref{subsection:LSdesc} more explicit under the assumptions of the theorem. Recall the projection map $P$ mapping $\ell_w^2$ to the subspace spanned by the coordinates indexed by $\Lambda$. By \cref{lem:lemma2}, for $\underline{\chi}$ solving \eqref{eq:chieq} with $\|\underline{\chi}\|_w$ sufficiently small 
\begin{align}\label{eq:projcon}
\|(I - P)\underline{\chi}\|_w \le O\left(\|\underline{\chi}\|_w^2\right).
\end{align}
This implies that $\Lambda$ correspond to the `dominant modes' around the bifurcation point $\kappa = 2/a$. Now, we carefully track the interactions between the dominant modes to obtain the \emph{reduced Lyapunov-Schmidt equation} given in the following lemma.
\begin{lemma}\label{lem:redLS}
    Let the assumptions in \cref{thm:high}(a) hold. For any non-trivial branch of solutions to \eqref{eq:chieq} locally around $\kappa = 2/a$, we have the following reduced Lyapunov-Schmidt equation:
    \begin{align*}
        (2-\kappa a)\chi_l + B_{ll} \, \chi_l^3 + \sum_{j \in \Lambda, j \neq l} B_{lj} \, \chi_l \,\chi_j^2 + O\left(\|\underline{\chi}\|_w^4\right) + O\left(\|\underline{\chi}\|_w^3|\kappa a - 2|\right) = 0, \quad l \in \Lambda,
    \end{align*}
    for sufficiently small $\|\underline{\chi}\|_w$.
\end{lemma}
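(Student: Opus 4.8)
The plan is to make the Lyapunov--Schmidt reduction of \cref{subsection:LSdesc} fully explicit by tracking, order by order in $\|\underline{\chi}\|_w$, how the modes indexed by $\Lambda$ feed back through the quadratic nonlinearity in \eqref{eq:chieq}. Throughout, write $\underline{u}=P\underline{\chi}$ (supported on $\Lambda$) and $\underline{w}=(I-P)\underline{\chi}$ (supported on $\Lambda^{c}$); since the computation is pointwise it suffices to establish the identity for every even solution $(\underline{\chi},\kappa)$ of \eqref{eq:chieq} with $\|\underline{\chi}\|_w$ and $|\kappa a-2|$ sufficiently small. By \eqref{eq:projcon} (that is, \cref{lem:lemma2} with $S=\Lambda$) we have $\|\underline{w}\|_w\le C\|\underline{u}\|_w^{2}$, so $\|\underline{u}\|_w$ and $\|\underline{\chi}\|_w$ are comparable and the two error scales in the statement are interchangeable; moreover any monomial on the right-hand side of \eqref{eq:chieq} carrying at least one $\underline{w}$-factor is $O(\|\underline{u}\|_w^{3})$, and one carrying two such factors is $O(\|\underline{u}\|_w^{4})$.

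The first step is a second-order expansion of $\underline{w}$. Because $a_n\neq a$ for $n\notin\Lambda$ and $a_n\to0$, one has $\inf_{n\notin\Lambda}|2-\kappa a_n|>0$ uniformly for $\kappa$ near $2/a$, so for $n\notin\Lambda$ the $n$-th equation of \eqref{eq:chieq} can be solved for $\chi_n$. Substituting $\underline{\chi}=\underline{u}+\underline{w}$, discarding the terms carrying a $\underline{w}$-factor, and replacing $\kappa$ by $2/a$ in the surviving coefficients gives $\chi_n=\chi_n^{(2)}+O(\|\underline{u}\|_w^{3})+O(\|\underline{u}\|_w^{2}|\kappa a-2|)$, where
\begin{align*}
\chi_n^{(2)}:=\frac{a}{n(a-a_n)}\Bigg(\sum_{\substack{\ell_i,\ell_j\in\Lambda\\ \ell_i+\ell_j=n}}\ell_i\,u_{\ell_i}u_{\ell_j}+\sum_{\substack{\ell_i,\ell_j\in\Lambda\\ \ell_i-\ell_j=n,\ \ell_i>\ell_j}}(\ell_i-\ell_j)\,u_{\ell_i}u_{\ell_j}\Bigg).
\end{align*}
In particular $\chi_n=O(\|\underline{u}\|_w^{3})$ unless $n=\ell_i+\ell_j$ or $n=|\ell_i-\ell_j|$ for some $\ell_i,\ell_j\in\Lambda$, and by assumption (i) every such $n$ lies in $\Lambda^{c}$, so the expansion is self-consistent.

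The second step is to substitute this into the $l$-th equation of \eqref{eq:chieq} for $l\in\Lambda$. Its left-hand side is $l(2-\kappa a)\,u_l$. On the right-hand side there is no purely $\underline{u}$-$\underline{u}$ contribution: a surviving term in the first sum would need $\ell_i+\ell_j=l\in\Lambda$ and in the second sum $\ell_i+l=\ell_j\in\Lambda$, both violating (i). The $\underline{w}$-$\underline{w}$ terms are $O(\|\underline{u}\|_w^{4})$. Hence at cubic order only the $\underline{u}$-$\underline{w}$ cross terms matter, and in each of them I would replace the $\underline{w}$-factor $\chi_n$ by $\chi_n^{(2)}$, at the cost of an error $O\!\big(\|\underline{u}\|_w(\|\underline{u}\|_w^{3}+\|\underline{u}\|_w^{2}|\kappa a-2|)\big)=O(\|\underline{u}\|_w^{4})+O(\|\underline{u}\|_w^{3}|\kappa a-2|)$. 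This leaves a finite list of cubic monomials $u_{\ell_i}u_{\ell_r}u_{\ell_t}$, organized into the (two per sum) families according to which convolution index carries the $\underline{w}$-factor. For each monomial, assumption (ii) rules out the configurations in which the ``$\ell_r+\ell_t$'' branch of $\chi_n^{(2)}$ would force a sum of three $\Lambda$-indices to lie in $\Lambda$, while in every remaining configuration the relevant index relation has the shape $\ell_\alpha+\ell_\beta=l+\ell_\gamma$ (with $l\in\Lambda$ itself), so assumption (iii) forces one free index to coincide with $l$ and the other with $\ell_\gamma$; the monomial is therefore exactly $u_l^{3}$ or $u_l u_j^{2}$ with $j\in\Lambda\setminus\{l\}$. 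Collecting the coefficients of $u_l^{3}$ and of $u_l u_j^{2}$, dividing by $l$, and using $\chi_l=u_l$ for $l\in\Lambda$ yields the claimed identity.

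What remains, and is the main obstacle, is the bookkeeping: enumerating the four cross-term families, carrying out the substitution of $\chi_n^{(2)}$, and---the genuinely computational part---checking that the accumulated numerical coefficients of $u_l^{3}$ and $u_l u_j^{2}$ are exactly $B_{ll}=\frac{a-2a_{2l}}{a-a_{2l}}$ and the two formulas for $B_{\ell_i\ell_j}$ in \cref{thm:high}, the asymmetry between $\ell_i<\ell_j$ and $\ell_i>\ell_j$ arising from whether the difference mode enters as $\ell_i-\ell_j$ or $\ell_j-\ell_i$. A minor point, handled exactly as in the proof of \cref{lem:lemma2}, is that the lower bound $\inf_{n\notin\Lambda}|2-\kappa a_n|>0$ and all implied constants are uniform over the neighborhood of $(\underline{0},2/a)$ under consideration; note that, because this argument solves the range equations directly and uses only the bound on $\|\underline{w}\|_w$, it never differentiates the Lyapunov--Schmidt map $\Psi$ and so does not require more than its continuity.
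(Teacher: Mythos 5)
Your setup is sound and is essentially the paper's own strategy: use \cref{lem:lemma2} to see that the off-$\Lambda$ modes are $O(\|\underline{u}\|_w^2)$, solve each range equation ($n\notin\Lambda$) to quadratic order in the dominant modes, substitute the resulting $\chi_n^{(2)}$ back into the $\Lambda$-equations, and use assumptions (i)--(iii) to show that the only cubic monomials that can survive are $u_l^3$ and $u_l u_j^2$. Your formula for $\chi_n^{(2)}$ is correct (it is exactly the paper's \eqref{case1}--\eqref{case2} evaluated at $\kappa=2/a$), the use of (i) to kill the pure $\underline{u}$-$\underline{u}$ terms is right, and your error accounting ($O(\|\underline{\chi}\|_w^4)+O(\|\underline{\chi}\|_w^3|\kappa a-2|)$) matches the statement.

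The genuine gap is that you stop exactly where the lemma's content begins. The lemma is not the qualitative assertion that the reduced equation has the form $(2-\kappa a)\chi_l + c_{ll}\chi_l^3 + \sum_j c_{lj}\chi_l\chi_j^2 + \text{errors} = 0$; it is the quantitative identification $c_{ll}=B_{ll}$, $c_{lj}=B_{lj}$, with the two distinct formulas for $j<l$ and $j>l$ and the correct signs, since these values (through $\tilde B^{-1}\mathbf{1}$ and its sign) drive the super/subcriticality conclusions of \cref{thm:high}(a). You explicitly defer this as ``the main obstacle,'' so the proof is incomplete at its essential step. Concretely, what is missing is the merging of the two cross-term families feeding each monomial: for $j<l$, the coefficient of $\chi_l\chi_j^2$ collects a contribution from the pair $(j\in\Lambda,\ \chi_{l-j}$ expanded$)$ \emph{and}, after re-indexing $t=|l-j|$, from the pair $(j\notin\Lambda,\ |l-j|\in\Lambda,\ \chi_j$ expanded$)$ including the sum index $l+j$; one must then verify, e.g., that
\begin{align*}
\kappa^2 a\left[\frac{ja + (l-j)a_{l-j}}{2-\kappa a_{l-j}} + \frac{(l+j)a_{l+j} - ja}{2-\kappa a_{l+j}}\right] = -\,l\,B_{lj} + O(|\kappa a-2|),
\end{align*}
and the analogous identities for $j>l$ and for $j=l$ (where $\kappa^2 a\,l(2a_{2l}-a)/\bigl(2(2-\kappa a_{2l})\bigr) = -lB_{ll}+O(|\kappa a-2|)$), before dividing by $l$ and moving everything to one side to obtain the stated signs. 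This bookkeeping is where nearly all of the paper's proof lives, and it is exactly the kind of computation where errors historically occur (cf.\ \cref{rem:CRrem} on the curvature miscalculation in earlier work), so it cannot be waved through; until it is done, the lemma is asserted rather than proved.
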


\begin{proof}[Proof of \cref{lem:redLS}]
We assume without loss of generality that $\kappa$ is such that $k a_j \neq 2$ for any $j \notin \Lambda$.

We abbreviate the coefficients of \eqref{eq:chieq} as $\gamma(i,j) := ja_j, j <i,$ and $\gamma(i,j) := ja_j - (j-i)a_{j-i}, j >i$. We also take $\chi_0=0$.
For $l \in \Lambda$, we write
\begin{align}\label{dec}
    l(2-\kappa a)\chi_l &= \kappa\sum_{j=1}^\infty \gamma(l,j) \chi_j \chi_{|l-j|}\notag\\
    & = \kappa\sum_{j \in \Lambda} \gamma(l,j) \chi_j \chi_{|l-j|} + \kappa\sum_{j \notin \Lambda, |l-j| \in \Lambda} \gamma(l,j) \chi_j \chi_{|l-j|} + \kappa\sum_{j \notin \Lambda, |l-j| \notin \Lambda} \gamma(l,j) \chi_j \chi_{|l-j|}.
\end{align}
    By \eqref{eq:projcon},
    \begin{align}\label{red1}
        \sum_{j \notin \Lambda, |l-j| \notin \Lambda} \gamma(l,j) \chi_j \chi_{|l-j|} = O\left(\|\underline{\chi}\|_w^4\right).
    \end{align}
    Now, we address the first term in \eqref{dec} when $j \in \Lambda$, and obtain the contributions of the indices in $\Lambda$ to $\chi_{|l-j|}$. By the `no resonance' assumption, $|l-j| \notin \Lambda$. From \eqref{eq:chieq} and \eqref{eq:projcon},
    \begin{align*}
        |l-j|(2-\kappa a_{|l-j|}) \chi_{|l-j|} &= \kappa\sum_{r=1}^\infty \gamma(|l-j|,r) \chi_r\chi_{||l-j|-r|}\\
        &= \kappa \sum_{r: r \in \Lambda, ||l-j|-r| \in \Lambda} \gamma(|l-j|,r) \chi_r\chi_{||l-j|-r|} + O\left(\|\underline{\chi}\|_w^3\right).
    \end{align*}
    
    We have the following cases.

    \textit{Case (i).} $j < l, r < l-j$.
As $r,l-j-r,j,l$ are all in $\Lambda$ and $l = (l-j-r) + j + r$, the second assumption on $\Lambda$ in the theorem is violated. Thus, there is no such $r \in \Lambda$.

 \textit{Case (ii).} $j < l, r > l-j$.
  As $r,r+j - l,j,l$ are all in $\Lambda$ and $l + (r+j-l) = j + r$, the third assumption on $\Lambda$ gives $r=l$ as the only such index. 

  \textit{Case (iii).} $j > l, r < l-j$. No such index $r$ as $j = (j-l-r) + \ell + r$.
  
 \textit{Case (iv).} $j > l, r > l-j$. From $l+r = j + (r + l-j)$, the only such index is $r=j$.

  Combining the above observations, we obtain
\begin{align}\label{case1}
      |l-j|(2-\kappa a_{|l-j|}) \chi_{|l-j|} = \kappa \gamma(|l-j|, j \vee l) \chi_j\chi_\ell + O\left(\|\underline{\chi}\|_w^3\right).
  \end{align}

  We now address the second term in \eqref{dec}, when $j \notin \Lambda, |l-j| \in \Lambda$. Note that
  \begin{align*}
      j(2-\kappa a_j) \chi_j = &= \kappa\sum_{r=1}^\infty \gamma(j,r) \chi_r\chi_{|j-r|}\\
        &= \kappa \sum_{r: r \in \Lambda, |j-r| \in \Lambda} \gamma(j,r) \chi_r\chi_{|j-r|} + O\left(\|\underline{\chi}\|_w^3\right).
  \end{align*}

Again, we have four cases.

\textit{Case (i).} $j < l, r < j$. No such index as $(l-j) + (j-r) + r = l$.

\textit{Case (ii).} $j < l, r > j$. In this case, $(l-j) + r = (r-j) + l$ gives $r=l$.

\textit{Case (iii).} $j > l, r < j$. For this case, $(j-l) + l = (j-r) + r$ gives $r=l$ and an additional index $r=j-l$ when $j \neq 2l$.

\textit{Case (iv).} $j > l, r > j$. No such index as $(r-j) + (j-l) + l = r$.

Combining the above,
\begin{align}\label{case2}
    j(2-\kappa a_j) \chi_j = \kappa \gamma(j,l) \, \chi_{|l-j|}\chi_l + \kappa \gamma(j,j-l) \mathbf{1}[j>l] \, \chi_{|l-j|}\chi_l + O\left(\|\underline{\chi}\|_w^3\right).
\end{align}

  Using \eqref{case1} and \eqref{case2} in \eqref{dec}, we obtain
  \begin{align}\label{dec2}
      l(2-\kappa a) \chi_l &= \kappa^2\sum_{j \in \Lambda, j < l} \frac{\gamma(l,j)\gamma(l-j,l)}{(l-j)(2-\kappa a_{l-j})}\chi_l\chi_j^2 + \kappa^2\sum_{j \in \Lambda, j > l} \frac{\gamma(l,j)\gamma(j-l,j)}{(j-l)(2-\kappa a_{j-l})}\chi_l\chi_j^2\notag\\
      &\quad + \kappa^2\sum_{j \notin \Lambda, |l-j| \in \Lambda} \frac{\gamma(l,j)\left[\gamma(j,l) + \gamma(j,j-l)\mathbf{1}[j>l,\, j \neq 2l]\right]}{j(2-\kappa a_j)}\chi_l \chi_{|l-j|}^2 + O\left(\|\underline{\chi}\|_w^4\right).
  \end{align}
  For the third term above, we now re-index $t = |l-j|$ to get
  \begin{align*}
      &\kappa^2\sum_{j \notin \Lambda, |l-j| \in \Lambda} \frac{\gamma(l,j)\left[\gamma(j,l) + \gamma(j,j-l)\mathbf{1}[j>l]\right]}{j(2-\kappa a_j)}\chi_l \chi_{|l-j|}^2 = \kappa^2\sum_{t \in \Lambda, t < l} \frac{\gamma(l,l-t)\gamma(l-t,l)}{(l-t)(2-\kappa a_{l-t})}\chi_l \chi_t^2\\
      &\qquad + \kappa^2\sum_{t \in \Lambda} \frac{\gamma(l,l+t)\left[\gamma(l+t,l) + \gamma(l+t,t)\mathbf{1}[t \neq l]\right]}{(l+t)(2-\kappa a_{l+t})}\chi_l \chi_t^2 + O\left(\|\underline{\chi}\|_w^4\right).
  \end{align*}
  Using this in \eqref{dec2}, we conclude
  \begin{align}\label{dec3}
      l(2-\kappa a) \chi_l &= \kappa^2\sum_{j \in \Lambda, j < l} \frac{\left[\gamma(l,j) + \gamma(l,l-j)\right]\gamma(l-j,l)}{(l-j)(2-\kappa a_{l-j})}\chi_l\chi_j^2 + \kappa^2\sum_{j \in \Lambda, j > l} \frac{\gamma(l,j)\gamma(j-l,j)}{(j-l)(2-\kappa a_{j-l})}\chi_l\chi_j^2\notag\\
      &\quad + \kappa^2\sum_{j \in \Lambda} \frac{\gamma(l,l+j)\left[\gamma(l+j,l) + \gamma(l+j,j)\mathbf{1}[j \neq l]\right]}{(l+j)(2-\kappa a_{l+j})}\chi_l \chi_j^2 + O\left(\|\underline{\chi}\|_w^4\right).
  \end{align}
  Now, we express the coefficients above in terms of the Fourier coefficients $\{a_j\}$ of $W$. For $j = l$ the coefficient of $\chi_l^3$ is
  \begin{align*}
  \kappa^2 \frac{\gamma(l,2l)\gamma(2l,l)}{2l(2-\kappa a_{2l})} &= \kappa^2\frac{(2l a_{2l} - la_l)(la_l)}{2l(2-\kappa a_{2l})}\\
  &= \frac{\kappa^2 a l(2a_{2l} - a)}{2(2-\kappa a_{2l})} = -l B_{ll} + O(|\kappa a - 2|).
  \end{align*}
  For $j<l$, the coefficient of $\chi_l\chi_j^2$ is
  \begin{align*}
      &\kappa^2\frac{\left[\gamma(l,j) + \gamma(l,l-j)\right]\gamma(l-j,l)}{(l-j)(2-\kappa a_{l-j})} + \kappa^2 \frac{\gamma(l,l+j)\left[\gamma(l+j,l) + \gamma(l+j,j)\right]}{(l+j)(2-\kappa a_{l+j})}\\
      & = \kappa^2 a \frac{ja + (l-j)a_{l-j}}{2-\kappa a_{l-j}} + \kappa^2 a \frac{(l+j)a_{l+j} - ja}{2-\kappa a_{l+j}} = - l B_{lj} + O(|\kappa a - 2|).
  \end{align*}
  Similarly, for $j>l$, the coefficient of $\chi_l\chi_j^2$ is
  \begin{align*}
  &\kappa^2 \frac{\gamma(l,j)\gamma(j-l,j)}{(j-l)(2-\kappa a_{j-l})} + \kappa^2 \frac{\gamma(l,l+j)\left[\gamma(l+j,l) + \gamma(l+j,j)\right]}{(l+j)(2-\kappa a_{l+j})}\\
  &= \kappa^2 a \frac{ja - (j-l)a_{j-l}}{2-\kappa a_{j-l}} + \kappa^2 a \frac{(l+j)a_{l+j} - ja}{2-\kappa a_{l+j}} = - l B_{lj} + O(|\kappa a - 2|).
  \end{align*}
  The lemma now follows upon using these expressions in \eqref{dec3}.
\end{proof}

\begin{proof}[Proof of \cref{thm:high}(a)]
    By the Lyapunov-Schmidt reduction and \cref{lem:redLS}, the problem $F(\underline{\chi},\kappa)=0$ is (locally) equivalent to $\Phi(\underline{u},\kappa)=0$ where $\Phi$ can be seen as a map from $\mathbb{R}^k \times \mathbb{R}_+$ to $\mathbb{R}^k$ given (locally) by
    $$
    \Phi_\ell(\underline{u}, \kappa) = (2-\kappa a)u_\ell + B_{\ell\ell} \, u_\ell^3 + \sum_{j \in \Lambda, j \neq \ell} B_{\ell j} \, u_\ell \,u_j^2 + O\left(\|\underline{u}\|_w^4\right) + O\left(\|\underline{u}\|_w^3|\kappa a - 2|\right), \quad \ell \in \Lambda.
    $$
    Consider the equation $\Phi(\underline{u},\kappa)=0$ in the region $\mathbb{R}^k \times (0,2/a]$. Make the change of variables $(\underline{u},\kappa)$ to $(\underline{\tilde{u}},\mu)$ by substituting
    $$
    \kappa = (2 - \mu)/a, \quad  \underline{u} = \mu^{1/2}\underline{\tilde{u}}.
    $$
    With these variables,
    $$
     \Phi_\ell(\underline{u}, \kappa) = \mu^{3/2}\tilde{u}_\ell\left[1 + \sum_{j \in \Lambda}B_{\ell j}\tilde{u}_j^2 + R_\ell(\underline{\tilde{u}},\mu)\right] =: \mu^{3/2}\tilde{u}_\ell \ G_\ell(\underline{\hat{u}},\mu), \quad \ell \in \Lambda,
    $$
    where, $\hat{u}_j := \tilde{u}_j^2, j \in \Lambda,$ and the remainder $R(\underline{\tilde{u}},\mu)$ is $O(\sqrt{\mu}\|\underline{\tilde{u}}\|_w^4) + O(\mu\|\underline{\tilde{u}}\|_w^3)$ and, in particular, satisfies $R(\underline{\tilde{u}},0)=0$. 
    
    Now, suppose we can obtain a submatrix $\tilde{B}$ of the matrix $B$ as described in the theorem such that $\tilde{B}$ is invertible and the equation
    $$
    \mathbf{1} + \tilde{B}\underline{y} = 0
    $$
    has a solution with $y_r>0$ for all $r \in \{i_1,\dots, i_m\}$. Let $\underline{\hat{u}}^p$, $G^p$ respectively denote the projection of the point $\underline{\hat{u}}$ and the map $G$ onto the coordinates $\{i_1,\dots, i_m\}$, and define the vector $\underline{y}^p$ by setting $y^p_{\ell_{i_r}} = y_r, r \in \{i_1,\dots, i_m\}$, identified as an element in this projected space. Then, $G^p(\underline{y}^p,0) = 0$. Moreover, the derivative of the map $\underline{\hat{u}}^p \mapsto G^p(\underline{\hat{u}}^p,\mu)$ at the point $(\underline{y}^p,0)$ is $\tilde{B}$, which is invertible according to the hypothesis of the theorem.
    
    
    Thus, by the Implicit Function Theorem (\citet[Theorem I.1.1]{kielhofer2012bifurcation}), there is a non-trivial $C^1$ curve of solutions $(\underline{\hat{u}}^p(\mu), \mu)$ to $G^p(\underline{\hat{u}}^p,\mu)=0$. It satisfies $\hat{u}^p_{\ell_{i_r}}(0) = y_r, r \in \{i_1,\dots, i_m\}$. This naturally produces a non-trivial branch of solutions $(\underline{\hat{u}}(\mu), \mu)$ in the whole space $\mathbb{R}_+^k \times [0,2/a)$, taking all the coordinates of the solution in $\Lambda \setminus \{\ell_{i_1},\dots, \ell_{i_m}\}$ to be identically zero. Expressing this in terms of the original variables $(\underline{u},\kappa)$ and using \eqref{eq:projcon}, we obtain the non-trivial solutions for sufficiently small $\mu>0$ as
    \begin{align*}
    u_\ell(\mu) = \pm \sqrt{\mu\hat{u}_\ell(\mu)}, \, \ell \in \Lambda, \quad \sum_{j \notin \Lambda} (1+j^2)u_j = O(\mu), \quad \kappa(\mu) = \frac{2}{a}\left(1 - \frac{\mu}{2}\right).
    \end{align*}
    A similar argument holds when $\underline{y}$ has strictly negative coordinates.
    Part (a) of the theorem now follows after a reparametrization in terms of $s \in (-\delta, \delta)$ with $\mu = s^2$.
\end{proof}

\begin{lemma}\label{lem:resLS}
    Let the hypotheses in \cref{thm:high}(b) hold. For any non-trivial branch of solutions to \eqref{eq:chieq} locally around $\kappa = 2/a$, we have the following reduced system of Lyapunov-Schmidt equations:
    \begin{align*}
        &(2-\kappa a)\chi_\ell -\kappa  a \chi_{m}\chi_{\ell+m} +  O\left(\|\underline{\chi}\|_w^3\right) = 0,\\
        &(2-\kappa a)\chi_m -\kappa  a \chi_{\ell}\chi_{\ell+m} +  O\left(\|\underline{\chi}\|_w^3\right) = 0,\\
        &(2-\kappa a)\chi_{\ell+m} -\kappa  a \chi_{\ell}\chi_m +  O\left(\|\underline{\chi}\|_w^3\right) = 0.
    \end{align*}
    for sufficiently small $\|\underline{\chi}\|_w$.
\end{lemma}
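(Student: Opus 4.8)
The plan is to run the Lyapunov--Schmidt reduction of \cref{subsection:LSdesc} with the resonant triple $\Lambda = \{m,\ell,\ell+m\}$, which is precisely the set of indices $j$ with $a_j = a = 2/\kappa^*$ at $\kappa^* = 2/a$, so that $N = \operatorname{ker}D_{\underline{p}}F(\underline{0},\kappa^*)$ is three-dimensional. Writing $P$ for the projection of $\ell^2_w$ onto the coordinates in $\Lambda$ and applying \cref{lem:lemma2}, any $\underline{\chi}$ solving \eqref{eq:chieq} near $(\underline{0},\kappa^*)$ satisfies $\|(I-P)\underline{\chi}\|_w = O(\|\underline{\chi}\|_w^2)$; thus $\Lambda$ comprises the dominant modes and $\chi_m,\chi_\ell,\chi_{\ell+m}$ are the reduced variables whose equations must be extracted.

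Then, exactly as in the proof of \cref{lem:redLS}, for each $q \in \Lambda$ I would split the right-hand side of \eqref{eq:chieq} for $\chi_q$ according to whether the two convolution indices $j$ and $|q-j|$ both lie in $\Lambda$ or not (using the convention $\chi_0 = 0$). By the bound from \cref{lem:lemma2} together with $\sup_{\ell\ge 1}\ell|a_\ell|<\infty$ and Young's inequality (as in \cref{lem:Freg}), all terms with at least one index outside $\Lambda$ contribute a uniform $O(\|\underline{\chi}\|_w^3)$ (indeed $O(\|\underline{\chi}\|_w^4)$ when both are outside). What remains are the finitely many \emph{quadratic} terms indexed by pairs $(j,|q-j|)$ with both indices in $\Lambda$. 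The essential difference from the no-resonance case of \cref{lem:redLS} is that here these quadratic terms do not cancel, so no second substitution of the off-$\Lambda$ expansion is needed; it is precisely this surviving quadratic contribution that later produces a transcritical rather than a pitchfork bifurcation.

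The final step is a short enumeration. For $q = m$ one needs $j,|m-j|\in\Lambda$: $j=m$ gives $|m-j|=0$; $j=\ell$ gives $\ell-m$, which lies in $\Lambda$ only if $\ell-m=m$, i.e.\ $\ell=2m$, excluded by hypothesis; and $j=\ell+m$ gives $|m-j|=\ell\in\Lambda$. Hence the unique surviving term is $\kappa[(\ell+m)a_{\ell+m}-\ell a_\ell]\chi_{\ell+m}\chi_\ell = \kappa m a\,\chi_\ell\chi_{\ell+m}$ (using $a_{\ell+m}=a_\ell=a$), and dividing $m(2-\kappa a_m)\chi_m=\cdots$ through by $m$ with $a_m=a$ gives the second identity. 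Symmetrically, for $q=\ell$ the only admissible pair is $(\ell+m,m)$ with coefficient $(\ell+m)a_{\ell+m}-m a_m=\ell a$, yielding the first identity; and for $q=\ell+m$ the two admissible pairs $(m,\ell)$ and $(\ell,m)$ contribute $\kappa(m a_m+\ell a_\ell)\chi_m\chi_\ell=\kappa(\ell+m)a\,\chi_\ell\chi_m$, yielding the third after dividing by $\ell+m$.

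I do not expect a serious obstacle: the points requiring care are that the hypothesis $\ell\neq 2m$ is exactly what excludes the spurious pair $\ell-m=m$ (which would otherwise add quadratic terms and alter the reduced system), and that one must check each of the three equations genuinely retains a nonzero quadratic term --- in sharp contrast to the no-resonance setting where all such terms vanish and one is forced to cubic order. Confirming that the $O(\|\underline{\chi}\|_w^3)$ remainder is uniform over the relevant range of $(\underline{\chi},\kappa)$ is routine given \cref{lem:lemma2,lem:Freg}.
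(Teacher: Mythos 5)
Your proposal is correct and follows essentially the same route as the paper: invoke the projection bound from \cref{lem:lemma2} (i.e.\ \eqref{eq:projcon}) to reduce to the dominant modes $\{m,\ell,\ell+m\}$, then enumerate the quadratic pairs with both indices in $\Lambda$, using $\ell\neq 2m$ to rule out the spurious pair and absorbing everything else into the $O(\|\underline{\chi}\|_w^3)$ remainder. The coefficient computations ($ma$, $\ell a$, and $(\ell+m)a$ before dividing by the mode index) match the paper's proof exactly.
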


\begin{proof}[Proof of \cref{lem:resLS}]
    Recall \eqref{eq:chieq}:
    $$
    \ell(2-\kappa a_\ell) \chi_\ell = \kappa \sum_{j < \ell} j a_j \chi_j \chi_{\ell-j} + \kappa \sum_{j > \ell} (j a_j - (j-\ell)a_{j-\ell} ) \chi_j \chi_{j-\ell}, \quad \ell \in \mathbb{N}.
    $$
    Consider $\ell = l$ above, where $l,m$ are as in the theorem. As $l \neq 2m$, there is no $j \in \mathbb{N}$ such that $\{j, l -j\} \in \{m,l,m+l\}$. Thus, when $\ell=l$, the only contribution purely from the dominant modes at the quadratic level comes from the second sum when $j=l+m$, corresponding to the only $j \in \mathbb{N}$ such that $\{j, j-l\}\in \{m,l,m+l\}$. Similarly, the only such quadratic contribution in \eqref{eq:chieq} when $\ell = m$ also comes from $j=l+m$. Finally, when $\ell = l+m$, the only contributions now come from the first sum when $j=l,m$. 
    
    The remaining terms are $O\left(\|\underline{\chi}\|_w^3\right)$ by \eqref{eq:projcon}. The lemma follows.
\end{proof}

\begin{proof}[Proof of \cref{thm:high}(b)]
 Write $\Lambda := \{l,m,l+m\}$. Similarly as in the proof of \cref{thm:high}(a), by the Lyapunov-Schmidt reduction and \cref{lem:redLS}, the problem $F(\underline{\chi},\kappa)=0$ is (locally) equivalent to $\Phi(\underline{u},\kappa)=0$ where $\Phi:\mathbb{R}^3 \times \mathbb{R}_+ \rightarrow \mathbb{R}^3$ is given by
    $$
    \Phi_\ell(\underline{u}, \kappa) = (2-\kappa a)u_\ell - \kappa a \prod_{j \in \Lambda, j \neq \ell} u_j + O\left(\|\underline{u}\|_w^3\right), \quad \ell \in \Lambda.
    $$
    Make the change of variables $(\underline{u},\kappa)$ to $(\underline{\tilde{u}},\mu)$ by substituting
    $$
    \kappa = (2 - \mu)/a, \quad  \underline{u} = \mu\underline{\tilde{u}}.
    $$
    Then,
    \begin{align*}
    \Phi_\ell(\underline{u}, \kappa) &= \mu^2\left(u_\ell - \kappa a \prod_{j \in \Lambda, j \neq \ell} \tilde{u}_j + O\left(|\mu|\|\underline{\tilde{u}}\|_w^3\right)\right),\\
    & = \mu^2\left(u_\ell - 2 \prod_{j \in \Lambda, j \neq \ell} \tilde{u}_j + R(\underline{\tilde{u}},\mu)\right) =: \mu^2G(\underline{\tilde{u}},\mu), \quad \ell \in \Lambda,
    \end{align*}
    where $R(\underline{\tilde{u}},\mu) = O\left(|\mu|\|\underline{\tilde{u}}\|_w^2\right)$ and $R(\underline{\tilde{u}},0)=0$. Now, we solve the system of equations
    $$
    z_\ell = 2 \prod_{j \in \Lambda, j \neq \ell} z_j, \quad \ell \in \Lambda,
    $$
    in $(z_l,z_m, z_{l+m})$, for solutions other than $(0,0,0)$. Note that, from the above equations, any other solution must necessarily have all the variables non-zero, and we must necessarily have $\frac{z_l}{z_m} = \frac{z_m}{z_l}$ from which we obtain $z_l = \pm z_m$. If $z_l = z_m$, we obtain two solutions
    $$
    \left(\frac{1}{2},\frac{1}{2}, \frac{1}{2} \right) \ \text{ and } \ \left(-\frac{1}{2},-\frac{1}{2}, \frac{1}{2} \right),
    $$
    and two more solutions when $z_l = -z_m$:
    $$
    \left(\frac{1}{2},-\frac{1}{2}, -\frac{1}{2} \right) \ \text{ and } \ \left(-\frac{1}{2},\frac{1}{2}, -\frac{1}{2} \right).
    $$
    More compactly, the solutions of the above equations are given by $\frac{1}{2}(\sigma_1, \sigma_2, \sigma_3)$ where $\sigma_i = \pm 1$ and $\sigma_1\sigma_2\sigma_3=1$. These give the solutions for $G(\underline{\tilde{u}},0)=0$. Moreover, the derivative of the map $\underline{\tilde{u}} \mapsto G(\underline{\tilde{u}},\mu)$ at $(\frac{1}{2}(\sigma_1, \sigma_2, \sigma_3), 0)$ for any such collection of $\{\sigma_i\}$ is given by
    \[
J=\begin{pmatrix}
1 & -\sigma_{3} & -\sigma_{2} \\
-\sigma_{3} & 1 & -\sigma_{1} \\
-\sigma_{2} & -\sigma_{1} & 1
\end{pmatrix},
\qquad
\det J = -4 \quad \text{as } \sigma_{1}\sigma_{2}\sigma_{3}=+1.
\]
Thus, the Implicit Function Theorem applies around any such solution and we obtain the bifurcating branches as given in the theorem.
    \end{proof}

\section{Proofs: Stationary density representation in the supercritical bifurcation regime}
Write $\kappa_1 = 2/a_1$ and $\alpha_\ell := |a_\ell|, \, \ell \in \mathbb{N}$.

\begin{lemma}\label{lem:lemma3}
Assume $\sum_{j=1}^\infty |a_j| < \infty$, $a_1>0$ and $a_j \neq a_1$ for all $j \ge 2$.

There exist $D_1, D_2 >0,\eta>0$ and $\delta \in (0,\kappa_1)$ such that for any $\{\chi_\ell : \ell \in \mathbb{N}\}$ with $\|\underline{\chi}\|_w \le \eta$ satisfying \eqref{eq:chieq} for some $\kappa \in [\kappa_1 - \delta, \kappa_1 + \delta]$, we must have
\begin{align*}
   & |\chi_\ell| \leq \frac{D_1}{\ell} (D_2|\chi_1|)^\ell,\\
    |\ell(2-\kappa a_\ell)\chi_\ell &- \kappa \sum_{j=1}^{\ell-1} j a_j \chi_j \chi_{\ell-j}| \leq D_1(D_2|\chi_1|)^{\ell+2}, \quad \ell \in \mathbb{N}.
\end{align*}
\end{lemma}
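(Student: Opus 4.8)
The plan is to establish the first estimate $|\chi_\ell|\le \tfrac{D_1}{\ell}(D_2|\chi_1|)^\ell$ by a bootstrap in the \emph{common exponent}, and then to deduce the second estimate as an immediate corollary, since the quantity inside the absolute value there is exactly the ``high tail'' $\kappa\sum_{j>\ell}(ja_j-(j-\ell)a_{j-\ell})\chi_j\chi_{j-\ell}$ left over from \eqref{eq:chieq} after reindexing $j=k+\ell$.

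First I would collect the elementary ingredients. From $\sum_j|a_j|=:A<\infty$ one has $a_j\to 0$, so, using $a_j\ne a_1$ for $j\ge 2$, there is $\delta\in(0,\kappa_1)$ with $\nu:=\inf\{|2-\kappa a_\ell|:\ell\ge 2,\ \kappa\in[\kappa_1-\delta,\kappa_1+\delta]\}>0$. Next, the Lyapunov--Schmidt a priori bound \cref{lem:lemma2}, applied with $S=\{1\}$ (legitimate since $a_j\ne a_1$ for $j\ge2$), provides $C,\eta_0>0$ and, possibly shrinking $\delta$, the crude estimate $|\chi_j|\le C\chi_1^2/j$ for all $j\ge2$ whenever $\|\underline\chi\|_w\le\eta_0$ and $\kappa\in[\kappa_1-\delta,\kappa_1+\delta]$ (here $C$ absorbs an absolute constant). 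I would then fix $D_1>0$ small enough that $D_1\le \nu/(3(\kappa_1+\delta)A)$ and $D_1\le 1/(2(\kappa_1+\delta)A)$, set $D_2:=1/D_1$ and $\eta:=\min\{\eta_0,1/C,D_1\}$, so that $D_2|\chi_1|\le D_2\eta\le1$ always.

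The core is the claim: for every $n\in\mathbb{N}$ and all $j\ge1$, $|\chi_j|\le\tfrac{D_1}{j}(D_2|\chi_1|)^{\min(j,n)}$. The case $n=1$ follows from the crude bound together with $D_1D_2\ge1$ and $C\eta\le1$ (for $j=1$ it is the identity $|\chi_1|\le D_1D_2|\chi_1|$; note this also forces $\underline\chi=0$ when $\chi_1=0$, via \cref{lem:lemma2}). For the step $n\to n+1$ it suffices to treat $\ell\ge n+1$; reading \eqref{eq:chieq} at index $\ell$, split the right-hand side into the low convolution $\kappa\sum_{i=1}^{\ell-1}ia_i\chi_i\chi_{\ell-i}$ and the high tail $\kappa\sum_{k\ge1}((k+\ell)a_{k+\ell}-ka_k)\chi_{k+\ell}\chi_k$. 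In the low convolution the weight $i$ is cancelled by the factor $1/i$ in the bound on $|\chi_i|$---this is precisely why dividing by $\ell$ in the ansatz is essential and why $\sum|a_j|<\infty$ (rather than $\sup j|a_j|<\infty$) is all that is needed---and since $\min(i,n)+\min(\ell-i,n)\ge n+1$ in every case while $D_2|\chi_1|\le1$, this term is $\le\kappa D_1^2A(D_2|\chi_1|)^{n+1}$. In the high tail, $\chi_{k+\ell}$ has index $\ge\ell>n$, so it can carry only exponent $n$, but it multiplies $\chi_k$ with $k\ge1$ contributing $\min(k,n)\ge1$, so the product carries exponent $\ge n+1$; the index weights $(k+\ell)$ and $k$ cancel against the $1/(k+\ell)$ and $1/k$ in the bounds, and the term is $\le2\kappa D_1^2A(D_2|\chi_1|)^{n+1}$. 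Dividing by $\ell|2-\kappa a_\ell|\ge\ell\nu$ gives $|\chi_\ell|\le\tfrac{3\kappa D_1^2A}{\ell\nu}(D_2|\chi_1|)^{n+1}\le\tfrac{D_1}{\ell}(D_2|\chi_1|)^{n+1}$ by the choice of $D_1$; for $\ell\le n$ nothing changes. Letting $n\to\infty$ yields the first estimate. The second estimate is the same high-tail computation with the now-complete bound $|\chi_m|\le\tfrac{D_1}{m}(D_2|\chi_1|)^m$: the product $\chi_{k+\ell}\chi_k$ carries exponent $\ell+2k\ge\ell+2$, so the tail is $\le2\kappa D_1^2A(D_2|\chi_1|)^{\ell+2}\le D_1(D_2|\chi_1|)^{\ell+2}$, again by the choice of $D_1$.

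The main obstacle is the genuinely two-sided coupling in \eqref{eq:chieq}: mode $\ell$ is fed by every mode $>\ell$, so a direct strong induction on $\ell$ would be circular. Overcoming this is exactly the point of bootstrapping the exponent uniformly over all modes simultaneously, exploiting the structural fact that a high-mode factor carrying an ``incomplete'' exponent is always paired with a low-mode factor that supplies the missing power; the remaining estimates are routine bookkeeping with Young's inequality and the summability of $\sum|a_j|$.
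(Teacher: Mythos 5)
Your proposal is correct and takes essentially the same route as the paper's proof: an a priori bound from \cref{lem:lemma2} with $S=\{1\}$, followed by a bootstrap on the common exponent (the paper's induction statement $\mathcal{I}_N$ is exactly your $\min(j,n)$ claim, with the low-convolution/high-tail split playing the same role), and the second estimate obtained from the $j>\ell$ tail of \eqref{eq:chieq} once the first bound is in hand. The differences (quadratic vs.\ linear crude bound, $D_2=1/D_1$ bookkeeping) are purely cosmetic.
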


\begin{proof}[Proof of \cref{lem:lemma3}]
We denote $y_{\ell} := \ell |\chi_{\ell}|, \, \ell \in \mathbb{N}$. By \cref{lem:lemma2}, there exist $B_1 \ge 1, \eta>0$ and $\delta \in (0,\kappa_1)$ such that for any $\{\chi_\ell : \ell \in \mathbb{N}\}$ with $\|\underline{\chi}\|_w \le \eta$ satisfying \eqref{eq:chieq} for some $[\kappa_1 - \delta, \kappa_1 + \delta]$,
$$
y_\ell \leq B_1|\chi_1|, \quad \ell \in \mathbb{N}.
$$
This implies that for $\ell\geq 2$, we have by~\cref{lem:lemma1}:
\begin{align*}
  |\ell(2-\kappa a_\ell)\chi_\ell |& \leq \sum_{j < \ell} \kappa \alpha_j y_j  y_{\ell-j} +    \sum_{j > \ell} \kappa (\alpha_j + \alpha_{j-\ell}) y_j  y_{j-\ell} \\
  & \leq (2\kappa  s ) B_1^2 |\chi_1|^2,
\end{align*}
where $s\coloneqq \sum_{j=1}^\infty \alpha_j$. 

Choosing $\delta>0$ sufficiently small, we obtain $\gamma_1 \in (0,1]$ such that $|2 - \kappa a_\ell| \ge \gamma_1>0$ for all $\ell \ge 2$ and $\kappa \in [\kappa_1 - \delta, \kappa_1 + \delta]$. Thus, we have that
\begin{align*}
    y_\ell \leq 2\kappa s \gamma_1^{-1} B_1^2 |\chi_1|^2 \le 2\bar\kappa s\gamma_1^{-1} B_1^2 |\chi_1|^2,\quad \forall \ell \geq 2,
\end{align*}
where we write $\bar\kappa := \kappa_1 + \delta$. (\cref{lem:lemma2} already gives an $O(|\chi_1|^2)$ bound but the above calculation is illustrative of the induction strategy below.)

For future use, we assume $\eta>0$ is small enough such that $|\chi_1| \le \|\underline{\chi}\| \le (2\bar\kappa s B_1\gamma_1^{-1})^{-1}$.

Now, we apply induction. We pose the induction statement for $N\geq 2$ as follows:
\begin{align*}
    \mathcal{I}_N: &|\chi_\ell|\leq \frac{(2\bar\kappa s B_1\gamma_1^{-1})^{\ell-1} B_1 |\chi_1|^\ell}{\ell},\quad \forall 1\leq \ell \leq N-1.\\
    &|\chi_\ell|\leq  \frac{(2\bar\kappa s B_1\gamma_1^{-1})^{N-1} B_1 |\chi_1|^N}{\ell}, \quad \forall  \ell \geq N.
\end{align*}

Clearly, $\mathcal{I}_2$ is true from the above calculations. Now, suppose that $\mathcal{I}_N$ is true for some $N\geq 2$. For $\ell \geq N+1$, we have
\begin{align*}
    |2-\kappa a_\ell| y_\ell \leq T_1^\ell + T_2^\ell,
\end{align*}
where (with convention $\sum_a^b = 0$ if $a>b$),

\begin{align*}
    T_1^\ell & = \kappa \sum_{j<\ell}\alpha_j y_j y_{\ell-j} \\
    &= \kappa \sum_{j=1}^{\ell-N}\alpha_j y_j y_{\ell-j} + \kappa \sum_{j=(\ell-N)+1}^{N-1}\alpha_j y_jy_{\ell-j} + \kappa \sum_{j=N}^{\ell-1}\alpha_j y_jy_{\ell-j} \\
    &\leq \kappa \big(\sum_{j=1}^{\ell-N}\alpha_j \big) (B_1|\chi_1|) \big((2\bar\kappa s B_1\gamma_1^{-1})^{N-1} B_1 |\chi_1|^N\big) \\
    &~~+ \kappa \sum_{j=\ell-N+1}^{N-1}\alpha_j \big((2\bar\kappa s B_1\gamma_1^{-1})^{j-1} B_1 |\chi_1|^j\big) \big((2\bar\kappa s B_1\gamma_1^{-1})^{\ell-j-1} B_1 |\chi_1|^{\ell-j}\big)    \\
    &~~+ \kappa \big(\sum_{j=N}^{\ell-1}\alpha_j \big) \big((2\bar\kappa s B_1\gamma_1^{-1})^{N-1} B_1 |\chi_1|^N\big) (B_1|\chi_1|)\\
    &=  \kappa \big( \sum_{j=1}^{\ell-N} \alpha_j+ \sum_{j=N}^{\ell-1} \alpha_j \big) \big((2\bar\kappa s B_1\gamma_1^{-1})^{N-1} B_1^2 |\chi_1|^{N+1}\big) + \kappa  \big( \sum_{j=\ell-N+1}^{N-1} \alpha_j \big)\big((2\bar\kappa s B_1\gamma_1^{-1})^{\ell-2} B^2_1 |\chi_1|^\ell\big) \\
   &\leq \kappa \left(\sum_{j<\ell}\alpha_j\right) (2\bar\kappa s B_1\gamma_1^{-1})^{N-1} B_1^2 |\chi_1|^{N+1},
\end{align*}
where, to get the last bound, we used $|\chi_1| \le (2\bar\kappa s B_1\gamma_1^{-1})^{-1}$, and 

\begin{align*}
    T_2^\ell &= \kappa \sum_{j >\ell} \alpha_{j} y_j y_{j-\ell} + \kappa \sum_{j >\ell} \alpha_{j-\ell} y_j y_{j-\ell}\\
    &\leq \kappa \left(\sum_{j >\ell} \alpha_{j} + \sum_{j=1}^{\infty}\alpha_j\right) \big((2\bar\kappa s B_1\gamma_1^{-1})^{N-1}B_1|\chi_1|^N\big) (B_1|\chi_1|)\\
    &= \kappa \left(\sum_{j >\ell} \alpha_{j} + \sum_{j=1}^{\infty}\alpha_j\right) (2\bar\kappa s B_1\gamma_1^{-1})^{N-1} B_1^2 |\chi_1|^{N+1}.
\end{align*}
From the above bounds, we obtain that
\begin{align*}
   |2-\kappa a_\ell| y_\ell \leq  2B_1\kappa s(2\bar\kappa s B_1\gamma_1^{-1})^{N-1} B_1 |\chi_1|^{N+1} \le (2B_1\bar\kappa s)(2\bar\kappa s B_1\gamma_1^{-1})^{N-1} B_1 |\chi_1|^{N+1}. 
\end{align*}
Since $|2 - \kappa a_\ell| \ge \gamma_1>0$ for all $\ell \ge 2$, we conclude
\begin{align*}
    |\chi_\ell| \leq  \frac{(2\bar\kappa s B_1\gamma_1^{-1})^N B_1 |\chi_1|^{N+1}}{\ell} \quad \forall \ell \geq N+1.
\end{align*}
Thus, $\mathcal{I}_{N+1}$ is true. This proves the first assertion of the lemma. 

To prove the second assertion, we use the first part to conclude that for $\ell \in  \mathbb{N}$, 
\begin{align*}
    &|\ell(2-\kappa a_\ell)\chi_\ell - \kappa \sum_{j=1}^{\ell-1} ja_j \chi_j \chi_{\ell-j}| \leq  \sum_{j >\ell} \kappa (\alpha_j +\alpha_{j-\ell})y_j y_{j-\ell}\\
    &\leq \sum_{j >\ell}\kappa (\alpha_j +\alpha_{j-\ell}) (2\bar\kappa s B_1\gamma_1^{-1})^{j-1} B_1|\chi_1|^j (2\bar\kappa s B_1\gamma_1^{-1})^{j-\ell-1} B_1 |\chi_1|^{j-\ell}\\
    &= \sum_{j >\ell}\kappa (\alpha_j +\alpha_{j-\ell}) (2\bar\kappa s B_1\gamma_1^{-1})^{2j-\ell-2} B^2_1|\chi_1|^{2j-\ell}\\
    & \leq \sum_{j >\ell}\kappa (\alpha_j +\alpha_{j-\ell}) (2\bar\kappa s B_1\gamma_1^{-1})^{\ell} B^2_1|\chi_1|^{\ell+2}\\
    &\le (2\kappa B_1 s)(2\bar\kappa s B_1\gamma_1^{-1})^{\ell} B_1|\chi_1|^{\ell+2}\\
    &\le (2\bar\kappa s B_1 \gamma_1^{-1})(2\bar\kappa s B_1\gamma_1^{-1})^{\ell} B_1|\chi_1|^{\ell+2}\\
    &\leq \frac{1}{2\bar\kappa s \gamma_1^{-1}} (2\bar\kappa s B_1 \gamma_1^{-1} |\chi_1|)^{\ell+2}.
\end{align*}
This proves the desired result.

\end{proof}

We are now ready to prove \cref{thm:main_thm}.

\begin{proof}[Proof of~\cref{thm:main_thm}]
By \cref{thm:period}(a), there exists an $m$-periodic non-trivial branch of solutions around $\kappa_m = 2/a_m$. This branch is explicitly related to those of the stationary equation \eqref{eq:stat} with potential $W^{(m)}(\theta) = \sum_{\ell=1}^\infty a_{\ell m}\cos \ell \theta, \, \theta \in [0,2\pi)$. In particular, the $\ell$-th Fourier mode $p_\ell= 0$ if $m \nmid \ell$ for any stationary solution $p$ on this branch. The supercriticality/subcriticality of this solution according to the sign of $R_m(W) = \frac{a_m - 2a_{2m}}{a_m - a_{2m}}$ then follow from \cref{thm:CRthm}. We now provide a detailed characterization of this branch. We work with the supercritical case, namely $R_m(W) >0$. The subcritical case follows similarly.

By \cref{lem:lemma1}, $p$ is a solution to \eqref{eq:stat} in $H_s^1(\mathbb{S}^1)$ if and only if, writing $\chi_\ell = p_\ell$, $p$ takes the form
 \begin{align}\label{eq:star}
     p(\theta) = \frac{1}{Z}\exp\bigg(\kappa \sum_{\ell=1}^\infty \chi_\ell a_\ell \cos \ell \theta\bigg).
 \end{align}
where $\{\chi_\ell: \ell \in \mathbb{N}\}$ satisfy \eqref{eq:chieq}.

It is convenient to view the `single branch' obtained from \cref{thm:period}(a) as two separate branches $\{(\underline{p}(s), \kappa(s)) : s >0\}$ (the `plus branch') and $\{(\underline{p}(s), \kappa(s)) : s <0\}$ (the `minus branch'). In each such branch, we can identify a unique solution $p_\kappa$ with a given value of $\kappa$ in the range of $\kappa(\cdot)$. Thus, to obtain the representation given in the theorem, it suffices to exhibit $ \delta_m>0$ and, for $\kappa \in (\kappa_m,\kappa_m +\delta_m]$, two non-trivial solutions of~\eqref{eq:stat} in $\mathbb{B}_{\delta_m}(p_0,H_s^1(\mathbb{S}^1))$ whose Fourier coefficients are given by
\begin{align*}
\chi^{\pm}_\ell(\kappa) = s^{\pm}(\kappa)^m (z_{\ell m}(\kappa) + r_{\ell m}(\kappa)), \ell \in \mathbb{N},    
\end{align*}
which correspond to the plus and minus branches.



Using~\cref{lem:lemma3} applied to the stationary equation \eqref{eq:stat} with potential $W^{(m)}$, we obtain positive constants $D_1, D_2, \delta$ such that for all $\kappa\in [\kappa_m - \delta, \kappa_m + \delta]$,
\begin{align}
    &|\chi_{\ell m}| \leq \frac{D_1}{\ell m} (D_2 |\chi_m|)^\ell, \label{eq:thm1A}\\
    &|\ell m (2-\kappa a_{\ell m}) \chi_{\ell m} - \kappa \sum_{j=1}^{\ell-1} jm \, a_{jm}\chi_{jm}\chi_{(\ell-j)m} | \leq D_1(D_2 |\chi_m|)^{\ell+2},~~~\forall \ell \in \mathbb{N}. \label{eq:thm1B}
\end{align}
Recall that by~\cref{lem:lemma1}, we have
\begin{align*}
    m(2-\kappa a_m)\chi_m = \sum_{j=2}^\infty \kappa (jm\,a_{jm} - (j-1)m\,a_{(j-1)m})\chi_{jm} \chi_{(j-1)m}.
\end{align*}
Thus, by~\eqref{eq:thm1A}, 
\begin{align*}
    m(2-\kappa a_m)\chi_m = \kappa (2m a_{2m} - m a_{m})\chi_{m} \chi_{2m} + O(|\chi_m|^5).
\end{align*}
Now, by using~\eqref{eq:thm1B} (with $\ell=2$), we have
\begin{align*}
    2m(2-\kappa a_{2m})\chi_{2m} = \kappa m a_{m} \chi^2_{m} + O(|\chi_m|^4).
\end{align*}
Using this in the previous equation, we obtain
\begin{align*}
   m(2-\kappa a_m)\chi_m = \frac{\kappa^2 (2m a_{2m}-m a_m) m a_m \chi_m^3}{2m(2-\kappa a_{2m})\chi_m} + O(|\chi_m|^5), 
\end{align*}
which gives the solutions $\chi_m=0$ and
\begin{align*}
    \chi_m = \underbrace{\pm \sqrt{\frac{2(\kappa a_m-2)(2-\kappa a_{2m})}{\kappa^2a_m (a_m-2a_{2m})}} + O((\kappa a_m - 2)^{3/2}).}_{s^{\pm}(\kappa)}
\end{align*}

If $\chi_m=0$, then by \cref{lem:lemma2} applied to potential $W^{(m)}$, $\underline{\chi} \equiv 0$ corresponding to stationary solution $p=p_0$.

Now suppose $\chi_m\neq 0$. Recall the sequence $\{z_{\ell m} = z_{\ell m}(\kappa)\}$ in~\eqref{eq:zseq}, and define $\hat{\chi}_{\ell m}\coloneqq \chi_m^{\ell} z_{\ell m}, \ell \in \mathbb{N}$. We claim that there exist $\{C'_\ell >0 : \ell \in \mathbb{N}\}$ such that
\begin{align}\label{eq:thm1C}
    |\chi_{\ell m} - \hat{\chi}_{\ell m}| = C'_\ell|\chi_m|^{\ell+2}, \ \forall \ell \in \mathbb{N}.
\end{align}
The above claim is clearly true for $\ell=1$, as $\chi_m=\hat{\chi}_m$. Suppose that it is true for all $\ell \leq L-1$, for $L\geq 2$. By~\eqref{eq:thm1B} and the definition of $\{ \hat{\chi}_{\ell m}\}$, we have that
\begin{align*}
    |L m (2-\kappa a_{Lm})| |\chi_{Lm} - \hat{\chi}_{Lm}| &\leq  \kappa \sum_{j=1}^{L-1}jm\,|a_{jm}||\chi_{jm} \chi_{(L-j)m} - \hat{\chi}_{jm}\hat{\chi}_{(L-j)m}| + D_1(D_2|\chi_m|)^{L+2}\\
    &\leq  C_L \sum_{j=1}^{L-1} jm \,|a_{jm}| \big( |\chi_{jm} - \hat{\chi}_{jm}| |\chi_m|^{L-j} + |\chi_{(L-j)m} - \hat{\chi}_{(L-j)m}||\chi_m|^j   \big)\\
    &\qquad + D_1 (D_2|\chi_m|)^{L+2} \\
    & \leq  C_L \sum_{j=1}^{L-1} jm\, |a_{jm}| C'_j |\chi_m|^{L+2} + D_1 (D_2|\chi_m|)^{L+2},
\end{align*}
for constant $C_L>0$, where the second inequality follows by triangle inequality, ~\eqref{eq:thm1A} and the fact that $|\hat{\chi}_{\ell m}| \leq \tilde{C}_\ell |\chi_m|^\ell$ for some constant $\tilde{C}_\ell>0, \forall \ell$, which can be checked by induction as in the proof of~\cref{lem:lemma3}. The last inequality uses the assertion that \eqref{eq:thm1C} holds for $\ell \le L-1$. This proves the claim in~\eqref{eq:thm1C} for all $\ell \in \mathbb{N}$.

Thus, we conclude that 
\begin{align*}
    \chi_{\ell m} = \chi_m^\ell(z_{\ell m} + O(\chi^2_m)) = \chi_m^\ell (z_{\ell m} + O(\kappa a_m-2)),
\end{align*}
thereby completing the proof of the representation given in the theorem.

Finally, if $a_\ell \neq a_m$ for all $\ell \neq m$, the uniqueness of the even non-trivial solutions locally around $(p_0,\kappa_m)$ in $H^1_s(\mathbb{S}^1) \times \mathbb{R}_+$ follows from the Fourier characterization of solutions in \cref{lem:lemma1} and the uniqueness of solutions, expressed in terms of Fourier modes, in $\ell_w^2 \times \mathbb{R}_+$ obtained from \cref{thm:CRthm}. 

 
\end{proof}

\section{Proofs: Phase Transition}

   \begin{proof}[Proof of \cref{thm:bifdpt}]
        We abbreviate $\int$ for $\frac{1}{2\pi}\int_0^{2\pi}$ and write $(p_s,\kappa_s)$ for the element in $H_s^1(\mathbb{S}^1) \times\mathbb{R}_+$ corresponding to $(\underline{p}(s),\kappa(s)) \in \ell^2_w \times \mathbb{R}_+$. Differentiating the free energy $\mathcal{F}(\cdot, \kappa^*)$ along the curve $s \mapsto p_s$, we obtain
        \begin{align*}
            \partial_s\mathcal{F}(p_s,\kappa^*) &= \int\partial_s\left(p_s(\theta) \log p_s(\theta)\right)\d\theta - \frac{\kappa^*}{2}\int\partial_s[W\star p_s (\theta) p_s(\theta)]\d\theta\\
            & = \int(\partial_s p_s(\theta)) \log p_s(\theta)\d\theta - \kappa^*\int W\star p_s (\theta) (\partial_s p_s(\theta))\d\theta,
        \end{align*}
        where the first term in the second equality follows from the fact that $\int p_s(\theta)\partial_s (\log p_s(\theta))\d\theta = \int\partial_sp_s(\theta)\d\theta = 0$ as $p_s$ is a probability density for all $s$, and the second term follows from the observation $\int\int p_s(\theta)W\star \partial_sp_s(\theta)\d\theta = \int\int W\star p_s (\theta) (\partial_s p_s(\theta))\d\theta$ as $W$ is an even function. As, $p_s$ is a solution to \eqref{eq:stat} with parameter $\kappa_s$, we have
        $$
        \log p_s(\theta) = \kappa_s W\star p_s (\theta) + C_s, \quad \theta \in [0, 2\pi),
        $$
        almost everywhere, for some constant $C_s$ possibly depending on $s$ but not $\theta$. Using this above, we get
        \begin{align}\label{pt1}
            \partial_s\mathcal{F}(p_s,\kappa^*) = (\kappa_s - \kappa^*)\int W\star p_s (\theta) (\partial_s p_s(\theta))\d\theta, \quad s \in [0,\delta).
        \end{align}
        From the Fourier representation of $p_s$, we obtain
        \begin{align*}
            \int W\star p_s (\theta) (\partial_s p_s(\theta))\d\theta &= \sum_{\ell=1}^{\infty}a_\ell p_\ell(s)p_\ell'(s)\\
            &=as\sum_{i \in \Lambda'}y_i^2 + O(s^2),
        \end{align*}
    which is strictly positive for all $s \in (0, \eta]$ for some $\eta \in (0, \delta]$. Moreover, by the assumed nature of the bifurcation, $\kappa_s < \kappa^*$ for all $s \in [0, \delta)$. Using this in \eqref{pt1}, we obtain
    $$
    \mathcal{F}(p_\eta, \kappa^*) - \mathcal{F}(p_0, \kappa^*) = \int_0^\eta (\kappa_s - \kappa^*)\int W\star p_s (\theta) (\partial_s p_s(\theta))\d\theta < 0.
    $$
    Thus, $p_0$ is not a global minimizer of $\mathcal{F}(\cdot, \kappa^*)$. The first assertion of the theorem now follows from \citet[Proposition 5.8(b)]{carrillo2020long}.

    To prove the second assertion, we highlight the dependence of the potential $W$ on $\mathcal{F}_W$ and denote it by $\mathcal{F}_{W}$. Observe that, for any $(\rho, \kappa) \in \mathcal{P}_{ac}^+(\mathbb{S}^1) \times \mathbb{R}_+$,
    \begin{align*}
    |\mathcal{F}_W(\rho, \kappa) - \mathcal{F}_{\tilde{W}}(\rho, \kappa)| &\le \frac{\kappa}{2}\int\int|W(\theta - \phi) - \tilde{W}(\theta- \phi)| \rho(\theta)\rho(\phi)\d\theta\d\phi\\
    &\le \frac{\kappa}{2}\|W - \tilde{W}\|_\infty \le \frac{\kappa}{2}\sum_{\ell=1}^\infty|a_\ell - \tilde{a}_\ell|.
    \end{align*}
    Thus,
    \begin{align*}
        \mathcal{F}_{\tilde{W}}(p_\eta, \kappa^*) - \mathcal{F}_{\tilde{W}}(p_0, \kappa^*) &\le \mathcal{F}_W(p_\eta, \kappa^*) - \mathcal{F}_W(p_0, \kappa^*) + \kappa \sum_{\ell=1}^\infty|a_\ell - \tilde{a}_\ell|\\
        & < \mathcal{F}_W(p_\eta, \kappa^*) - \mathcal{F}_W(p_0, \kappa^*) + \kappa \epsilon.
    \end{align*}
    The second assertion now follows upon taking any $\epsilon \in \left(0, a \wedge\left[(\mathcal{F}_W(p_0,\kappa^*) - \mathcal{F}_W(p_\eta, \kappa^*))/\kappa\right]\right)$ and appealing once again to \citet[Proposition 5.8(b)]{carrillo2020long}.
    \end{proof}

    \begin{proof}[Proof of~\cref{thm:phasetr}]
    We write the Fourier coefficients of $W_\beta$ as $a_\ell(\beta) = 2I_\ell(\beta)/\beta$, $\ell \in \mathbb{N}$. The assertion on discontinuity of phase transition for $\beta \ge \beta_+$, where $\beta_+ \le R^{-1}(1/2)$, and the role of $R^{-1}(1/2)$, follows from \cref{thm:bifdpt}. 
    
    To get the continuity of phase transition at $\kappa^*(\beta)$ for small $\beta$, one could follow the argument in the proof of~\citet[Theorem 5.19]{carrillo2020long} upon observing that 
    \begin{align}\label{ascale}
        a_1(\beta)&=\frac{2I_1(\beta)}{\beta} = 1+O(\beta)\notag\\
        a_\ell(\beta)&=\frac{2I_\ell(\beta)}{\beta} = O(\beta^{\ell-1}),~~\forall \ell\geq 2.
    \end{align}
    For a more direct proof using \cref{lem:lemma1}, \cref{lem:lemma2} and~\cref{lem:lemma3}, first note that by~\citet[Proposition 5.8(a)]{carrillo2020long} it suffices to show $p_0$ is the only stationary distribution for $\kappa = \kappa^*(\beta) = \beta/I_1(\beta)$, for $\beta\leq \beta_{-}$. To see this, suppose $\{p_{(\beta)} : \beta>0\}$ denote any family of stationary solutions at $\kappa = \kappa^*(\beta)$ and let $(\chi_i(\beta))$ denote the Fourier coefficients of $p_{(\beta)}$. Note that
    \begin{align*}
       p_{(\beta)}(\theta) & = \frac{1}{Z_\beta}\exp \bigg( \kappa^*(\beta) \sum_{\ell=1}^\infty \frac{2 I_\ell(\beta)}{\beta} \chi_\ell(\beta) \cos \ell \theta \bigg) \\
       & = \frac{1}{I_0(2\chi_1(\beta)) (1 + O(\beta))} \exp \bigg(2 \chi_1(\beta)\cos\theta \bigg) (1+O(\beta)),
    \end{align*}
    which implies that 
    \begin{align*}
        \chi_1(\beta) &= \frac{I_1(2\chi_1(\beta))}{I_0(2\chi_1(\beta))} (1+O(\beta)).
    \end{align*}
    We claim that $\chi_1(\beta)\stackrel{\beta\to0}{\longrightarrow} 0$. Suppose not. Then, we can obtain a sequence $\beta_k \downarrow 0$ and $\eta>0$ such that $\chi(\beta_k) \rightarrow \eta$ as $k \to \infty$. The above relation and the continuity of $I_0(\cdot), I_1(\cdot)$ imply that
    $$
    \eta = \frac{I_1(2\eta)}{I_0(2\eta)}.
    $$
    But the only solution to the equation $x = \frac{I_1(2x)}{I_0(2x)}$ is $x=0$ (\citet[Page 4]{bertini2010dynamical}), which gives a contradiction.

    By \cref{lem:lemma1}, with $\ell=1,2$, $\kappa = \kappa^*(\beta)$, we have
    \begin{align}\label{pttr}
        &0=\sum_{\ell=2}^\infty \kappa^*(\beta) (\ell a_\ell(\beta) - (\ell-1)a_{\ell-1}(\beta) ) \chi_\ell(\beta) \chi_{\ell-1}(\beta),\notag\\
     & 2(2-\kappa^*(\beta)a_2(\beta)) \chi_2(\beta) =  \kappa^*(\beta) a_1(\beta) \chi_1(\beta)^2 + \sum_{\ell=3}^\infty \kappa^*(\beta) (\ell a_\ell(\beta) - (\ell-2)a_{\ell-2}(\beta) ) \chi_\ell(\beta) \chi_{\ell-2}(\beta). 
    \end{align}
    Thus, using \cref{lem:lemma3} in the first equality in \eqref{pttr}, 
    \begin{align*}
        \kappa^*(\beta) (a_1(\beta) - 2 a_2(\beta)) \chi_1(\beta) \chi_2(\beta) = O(|\chi_1(\beta)|^5).
        \end{align*}
    Using the second equality in \eqref{pttr} (after multiplying throughout by $\chi_1(\beta)$), along with \cref{lem:lemma3}, in the above equation, we get
        \begin{align*}
        \kappa^*(\beta) (a_1(\beta) - 2 a_2(\beta))  \left[  \frac{\kappa^*(\beta) a_1(\beta) \chi_1(\beta)^3}{2(2-\kappa^*(\beta) a_2(\beta))} + O(|\chi_1(\beta)|^5)\right] = O(|\chi_1(\beta)|^5).
    \end{align*}
    From \eqref{ascale}, we have
    \begin{align*}
        \lim_{\beta \searrow 0} \frac{\kappa^2_1(\beta) a_1(\beta) (a_1(\beta) - a_2(\beta))}{2(2-\kappa^*(\beta) a_2(\beta))} > 0.
    \end{align*}
    Therefore, $\exists \beta_{-} > 0$ such that $\chi_1(\beta)=0, \forall\beta\leq \beta_{-}$. By \cref{lem:lemma2}, this implies that $\chi_1(\beta)=0, \forall \ell \in \mathbb{N},$ $\beta\leq\beta_-$. Therefore, $p_{(\beta)} = p_0$, $\forall \beta \leq \beta_{-}$, thereby completing the proof.
    
\end{proof}

\section{Proofs: Global properties}

\begin{proof}[Proof of \cref{cor:explicitbif}]
    Using the Fourier modes of $W_{Poi}(\theta) := -\log\left(2\sin(\theta/2)\right)$ in \cref{lem:lemma1}, we see that the second sum in \eqref{eq:Fid} drops out and the Fourier modes $\{p_\ell : \ell \ge 1\}$ of any solution $p$ in $H_s^1(\mathbb{S}^1)$ must satisfy
    \begin{align*}
        \ell(2-\kappa \ell^{-1}) p_\ell = \kappa \sum_{j < \ell} p_j p_{\ell-j}, \quad \ell \ge 1.
    \end{align*}
    In particular, if $\kappa \neq 2k$ for some $k \in \mathbb{N}$, $p_1=0$ and by the above recursion, $p_\ell=0$ for all $\ell \ge 1$. Thus, for such $\kappa$, $p=p_0$ is the only stationary solution.

    If $\kappa = \kappa^* := 2\ell^*$ for some $\ell^* \in \mathbb{N}$, we see from the above recursion that $p_\ell = 0$ for all $\ell < \ell^*$, and the equation corresponding to $\ell = \ell^*$ takes a $0=0$ form. Thus, by induction, we conclude $p_\ell=0$ for all $\ell$ which is not a multiple of $\ell^*$ and for $k \ge 2$,
    \begin{align*}
        2k\ell^*(1-k^{-1})p_{k\ell^*} = 2\ell^*\sum_{j=1}^{k-1}p_{j\ell^*} p_{(k-j)\ell^*}
    \end{align*}
    which gives
    $$
    p_{k\ell^*} = p_{\ell^*}^k \ \forall \ k \ge 2.
    $$
    Further, $p$ lies in $H_s^1(\mathbb{S}^1)$ if and only if $p_{\ell^*} = r$ with $|r|<1$ and we obtain the one-parameter family of non-trivial solutions at $\kappa = 2\ell^*$ given by $\{P_{r,\ell^*} : r \in (-1,1)\}$ in the theorem, which follows from the identity
    $$
    1 + 2\sum_{n=1}^{\infty}r^n\cos(n\theta) = \operatorname{Re}\left(\frac{1+re^{i\theta}}{1-re^{i\theta}}\right) = \frac{1-r^2}{1-2r\cos(\theta) + r^2}, \quad \theta \in [0,2\pi), r \in (-1,1).
    $$
    The weak convergence of $P_{r,\ell^*}$ as $r \to \pm 1$ follows from the `approximate identity' property of the Poisson kernel \citet[Chapter 11]{rudin1987real}.
\end{proof}

\begin{proof}[Proof of \cref{thm:finkur}]
    As in \cref{thm:period}(a), consider the potential $W^{(L)}(\theta) = a_\ell \cos \theta, \ \theta \in [0, 2 \pi)$. By well-known results on the Kuramoto model (see \cite{bertini2010dynamical} and \citet[Section 6.1]{carrillo2020long}), there exists a unique global non-trivial branch for this potential given by
    $$
    \pi^{Kur}_\kappa(\theta) := \frac{1}{I_0(\kappa a_\ell r_\kappa)}\exp\left(\kappa a_\ell r_\kappa\cos \theta\right), \quad \theta \in [0, 2\pi), \quad \kappa \in (2/a_\ell, \infty),
    $$
    where $r_\kappa$ is the unique positive solution to
    $$
    r_\kappa = \frac{I_1(\kappa a_\ell r_\kappa)}{I_0(\kappa a_\ell r_\kappa)}.
    $$
    This one-sided branch can be extended to a global two-sided branch $\{(\pi^{Kur}(s), \kappa(s)): s \in (- \infty, \infty)\}$ by declaring $\kappa(0) = 2/a_\ell, \kappa(s) = \kappa(-s)$ and $\pi^{Kur}(s)(\cdot) := \pi^{Kur}_{\kappa(-s)}(\cdot)$ for $s  \le 0$. The result now follows by \cref{thm:period}(a).
\end{proof}

\begin{proof}[Proof of \cref{thm:global}]
    The fact that $M(\kappa)$ is non-empty for every $\kappa > 0$ is proved in \citet[Theorem~2.2]{chayes2010mckean}. We now prove the Lipschitz continuity of $m(\cdot)$. For any $\rho \in \mathcal{P}_{ac}^+(\mathbb{S}^1)$ and $\kappa_1, \kappa_2>0$, 
    $$
    \left|\mathcal{F}(\rho,\kappa_1) - \mathcal{F}(\rho,\kappa_2)\right| = \frac{|\kappa_1 - \kappa_2|}{2}\left|\mathcal{E}(\rho,\rho)\right| \le \frac{|\kappa_1 - \kappa_2|}{2}\|W\|_\infty.
    $$
    Thus, taking infimum with respect to $\rho$,
    $$
    \inf_{\rho \in \mathcal{P}_{ac}^+(\mathbb{S}^1)}\mathcal{F}(\rho,\kappa_1) \le \inf_{\rho \in \mathcal{P}_{ac}^+(\mathbb{S}^1)}\mathcal{F}(\rho,\kappa_2) + \frac{|\kappa_1 - \kappa_2|}{2}\|W\|_\infty
    $$
    and a symmetric inequality holds for $\kappa_1$ and $\kappa_2$ interchanged. Thus,
    $$
    |m(\kappa_1) - m(\kappa_2)| \le \frac{|\kappa_1 - \kappa_2|}{2}\|W\|_\infty
    $$
    proving global Lipschitz continuity of $m(\cdot)$.
    
    To prove the sequential compactness in $\mathbb{R}_+ \times L^1(\mathbb{S}^1)$ (which also gives compactness of $M(\kappa)$ in $L^1(\mathbb{S}^1)$ for any fixed $\kappa$), assume $\kappa_n \to \kappa^\circ$ and $\rho_n \in M(\kappa_n)$. Without loss of generality that $\sup_n\kappa_n \le K < \infty$. By compactness of $\mathbb{S}^1$, which implies tightness of laws $\{\mu_n\}$ with densities given by $\{\rho_n\}$, extract a subsequence $\{n_j\}$ such that $\mu_{n_j}$ converge weakly to a probability measure $\mu^\circ$ on $\mathbb{S}^1$. 
    
    Note that, for any $n \in \mathbb{N}$, recalling $p_0$ denotes the (density of the) uniform measure on $\mathbb{S}^1$,
    $$
    \mathcal{F}(p_0,\kappa_n) = \frac{\kappa_n}{2}\int\int W(\theta - \phi)\d\theta \d \phi \le \frac{K}{2}\|W\|_\infty.
    $$
    Using the fact that $\rho_n$ minimizes $\mathcal{F}(\cdot, \kappa_n)$ and the above bound,
    \begin{align*}
        \int \rho_n(\theta) \log \rho_n(\theta)\d\theta = \mathcal{F}(\rho_n, \kappa_n) + \frac{\kappa_n}{2}\mathcal{E}(\rho_n, \rho_n)
        \le \mathcal{F}(p_0, \kappa_n) + \frac{K}{2} \|W\|_\infty \le K\|W\|_\infty.
    \end{align*}
    Thus, by lower semicontinuity of entropy established, for example, in \citet[Lemma 2.4(b)]{budhiraja2019analysis}, we conclude that $\mu^\circ$ has a density $\rho^\circ$ and
 \begin{align}\label{glo1}
 \int\rho^\circ(\theta)\log \rho^\circ(\theta) \d\theta \le \lim\inf_{j \to \infty}\int \rho_{n_j}(\theta) \log \rho_{n_j}(\theta)\d\theta \le K\|W\|_\infty.
 \end{align}
 Moreover, as $\mu_{n_j} \otimes \mu_{n_j}$ weakly converges to $\mu^\circ \otimes \mu^\circ$ and $W$ is bounded continuous,
 \begin{align}\label{glo2}
 \mathcal{E}(\rho_{n_j}, \rho_{n_j}) \to \mathcal{E}(\rho^\circ, \rho^\circ) \ \text{ as } j \to \infty.
 \end{align}
From \eqref{glo1} and \eqref{glo2}, we conclude that for any $\rho \in \mathcal{P}_{ac}^+(\mathbb{S}^1)$,
\begin{align*}
    \mathcal{F}(\rho^\circ, \kappa^\circ) \le \liminf_{j \to \infty}\mathcal{F}(\rho_{n_j}, \kappa_{n_j}) \le \liminf_{j \to \infty}\mathcal{F}(\rho, \kappa_{n_j}) = \mathcal{F}(\rho, \kappa^\circ),
\end{align*}
which implies that $\rho^\circ \in M(\kappa^\circ)$. 

To complete the proof of (i), it thus remains to prove that $\rho_{n_j} \to \rho^\circ$ in $L^1(\mathbb{S}^1)$. Combining $\rho^\circ \in M(\kappa^\circ)$ with the continuity of $m(\cdot)$ and \eqref{glo2},
 $$
 \int \rho_{n_j}(\theta) \log \rho_{n_j}(\theta)\d\theta = m(\kappa_{n_j}) + \frac{\kappa_{n_j}}{2}\mathcal{E}(\rho_{n_j}, \rho_{n_j}) \to m(\kappa^\circ) + \frac{\kappa^\circ}{2}\mathcal{E}(\rho^\circ, \rho^\circ) = \int\rho^\circ(\theta)\log \rho^\circ(\theta) \d\theta
 $$
 as $j \to \infty$. Moreover, as $\rho^\circ \in M(\kappa^\circ)$, $\rho^\circ$ is a critical point of the free energy $\mathcal{F}(\cdot, \kappa^\circ)$ and thus satisfies \eqref{eq:stat}, and consequently, $\log \rho^\circ = \kappa^\circ W\star \rho^\circ + C$ for some constant $C$. Thus, almost surely (with respect to $p_0$), $\rho^\circ$ is positive and $\log \rho^\circ$ equals a bounded continuous function (as $W$ is bounded continuous).
 
 The above observations give convergence in relative entropy:
 \begin{align*}
     R\left(\rho_{n_j} || \rho^\circ\right) &:= \int \rho_{n_j}(\theta) \log\left(\frac{\rho_{n_j}(\theta)}{\rho^\circ(\theta)}\right)\d\theta = \int \rho_{n_j}(\theta) \log \rho_{n_j}(\theta)\d\theta - \int \rho_{n_j}(\theta) \log \rho^\circ(\theta)\d\theta\\
     &\rightarrow \int\rho^\circ(\theta)\log \rho^\circ(\theta) \d\theta - \int\rho^\circ(\theta)\log \rho^\circ(\theta) \d\theta=0.
 \end{align*}
 Thus, by Pinsker's inequality (\citet[Page 88]{tsybakov2008nonparametric}),
 $$
 \|\rho_{n_j} - \rho^\circ\|_{L^1(\mathbb{S}^1)} \le \sqrt{2R\left(\rho_{n_j} || \rho^\circ\right)} \to 0,
 $$
completing the proof of (i).

Now, we prove (ii). We have already shown that $m(\cdot)$ is globally Lipschitz. Recalling that $m(\kappa) = \inf_{\rho \in \mathcal{P}_{ac}^+(\mathbb{S}^1)}\mathcal{F}(\rho, \kappa)$ and $\mathcal{F}(\rho, \kappa)$ is affine in $\kappa$, concavity of $m(\cdot)$ follows on noting for any $0<\kappa_1 < \kappa_2$ and $\alpha \in [0,1]$,
\begin{align*}
m(\alpha\kappa_1 + (1 - \alpha)\kappa_2) &= \inf_{\rho \in \mathcal{P}_{ac}^+(\mathbb{S}^1)}\mathcal{F}(\rho,\alpha\kappa_1 + (1 - \alpha)\kappa_2) = \inf_{\rho \in \mathcal{P}_{ac}^+(\mathbb{S}^1)}\left(\alpha\mathcal{F}(\rho, \kappa_1) + (1-\alpha)\mathcal{F}(\rho, \kappa_2)\right)\\ 
&\ge \alpha\inf_{\rho \in \mathcal{P}_{ac}^+(\mathbb{S}^1)}\mathcal{F}(\rho, \kappa_1) + (1-\alpha)\inf_{\rho \in \mathcal{P}_{ac}^+(\mathbb{S}^1)}\mathcal{F}(\rho, \kappa_2) = \alpha m(\kappa_1) + (1-\alpha)m(\kappa_2).
\end{align*}
To get the existence and representation for the left and right derivatives, observe that for any $\rho \in \mathcal{P}_{ac}^+(\mathbb{S}^1)$, $\kappa>0$ and $h>0$,
\begin{align*}
    m(\kappa + h) \le \mathcal{F}(\rho, \kappa) - \frac{h}{2}\mathcal{E}(\rho, \rho).
\end{align*}
Taking $\rho \in M(\kappa)$, the above gives
\begin{align}\label{en1}
\limsup_{h \downarrow 0}\frac{ m(\kappa + h) - m(\kappa)}{h} \le -\frac{1}{2}\sup_{\rho \in M(\kappa)}\mathcal{E}(\rho, \rho).
\end{align}
Now, consider $h_n \downarrow 0$ such that
$$
\liminf_{h \downarrow 0}\frac{ m(\kappa + h) - m(\kappa)}{h} = \lim_{n \to \infty}\frac{ m(\kappa + h_n) - m(\kappa)}{h_n}
$$
and $\rho_n \in M(\kappa + h_n)$. Note that
$$
m(\kappa + h_n) - m(\kappa) \ge \mathcal{F}(\rho_n, \kappa + h_n) - \mathcal{F}(\rho_n, \kappa) = -\frac{h_n}{2}\mathcal{E}(\rho_n, \rho_n).
$$
Moreover, by part (i), there is a subsequence $\{n_j\}$ such that $\rho_{n_j} \rightarrow \rho^* \in M(\kappa)$ in $L^1(\mathbb{S}^1)$ (and thus, weakly), and as $W$ is bounded and continuous,
$
\mathcal{E}(\rho_{n_j}, \rho_{n_j}) \to \mathcal{E}(\rho^*, \rho^*).
$
Thus, 
\begin{align}\label{en2}
    \liminf_{h \downarrow 0}\frac{ m(\kappa + h) - m(\kappa)}{h} = \lim_{j \to \infty}\frac{ m(\kappa + h_{n_j}) - m(\kappa)}{h_{n_j}} \ge -\frac{1}{2}\mathcal{E}(\rho^*, \rho^*) \ge -\frac{1}{2}\sup_{\rho \in M(\kappa)}\mathcal{E}(\rho, \rho).
\end{align}
From \eqref{en1} and \eqref{en2}, we conclude that $\lim_{h \downarrow 0}\frac{ m(\kappa + h) - m(\kappa)}{h}$ exists and
$$
m'_+(\kappa) := \lim_{h \downarrow 0}\frac{ m(\kappa + h) - m(\kappa)}{h} = -\frac{1}{2}\sup_{\rho \in M(\kappa)}\mathcal{E}(\rho, \rho).
$$
The left derivative can be handled similarly. The monotonicity properties of $\sup_{\rho \in M(\kappa)}\mathcal{E}(\rho, \rho)$ and $\inf_{\rho \in M(\kappa)}\mathcal{E}(\rho, \rho)$ follow exactly as in \citet[Proposition 2.4]{chayes2010mckean}. This completes the proof of (ii).

Finally, to prove (iii), note that by the concavity of $m$, for any $0<a<b$, 
$$
m'_+(a) \ge \frac{m(b) - m(a)}{b-a} \ge m'_-(b).
$$ 
Let $\kappa>0$ be a continuity point of both $m'_+$ and $m'_-$. Taking $a=\kappa - h$, $b=\kappa+h$ (for small enough $h>0$) above and taking a limit as $h \downarrow 0$, we obtain
$$
m'_+(\kappa) \ge \lim_{h \downarrow 0}\frac{m(\kappa + h) - m(\kappa-h)}{2h} \ge m'_-(\kappa).
$$
But, from part (ii), $m'_+(\kappa) \le m'_-(\kappa)$. Hence,
$$
m'_+(\kappa) = \lim_{h \downarrow 0}\frac{m(\kappa + h) - m(\kappa-h)}{2h} = m'_-(\kappa),
$$
which gives the differentiability of $m$ at $\kappa$. Thus, the points of non-differentiability of $m$ form a subset of the set of discontinuity points of either $m'_+$ or $m'_-$ or both. But this set is at most countable as $m'_+$ and $m'_-$ are both monotone, proving (iii).
 \end{proof}

 \textbf{Acknowledgements: }KB is supported in part by National Science Foundation (NSF) grant DMS-2413426. SB is supported in part by the NSF-CAREER award DMS-2141621 and the NSF-RTG award DMS-2134107. PR is supported in part by NSF grants DMS-2022448 and CCF-2106377.

\appendix
\section{Justification for~\cref{rem:transformer_explicit}}\label{sec:adddetails}
Here, we provide details about the explicit analytical description of the bifurcating solutions \(p_\kappa^{\pm}(\theta)\) near the bifurcation point via various approximations. Note that we have the following:
\begin{enumerate}
    \item {\it Amplitude term \(s^{\pm}(\kappa)\).} The leading order in \(\varepsilon=\kappa a_m-2\), is given by
    \[
    s^{\pm}(\kappa)=\pm\sqrt{\varepsilon\,\frac{a_m-a_{2m}}{a_m-2a_{2m}}}+O(|\varepsilon|^{3/2}).
    \]
    Substituting \(a_\ell=\tfrac{2I_\ell(\beta)}{\beta}\)  then yields
    \[
    s^{\pm}(\kappa)
    =\pm\sqrt{\left(\kappa\frac{2I_m(\beta)}{\beta}-2\right)\frac{I_m(\beta)-I_{2m}(\beta)}{I_m(\beta)-2I_{2m}(\beta)}}
    +O(|\kappa a_m-2|^{3/2}).
    \]
    The sign/branch and whether these appear for \(\kappa>\kappa_m\) (supercritical) or \(\kappa<\kappa_m\) (subcritical) is determined by the $m$-signature, i.e., the sign of $R_m(\beta)= (a_m-2a_{2m})/ (a_m-2a_{2m})$, as described in \cref{thm:main_thm}.
    \item {\it Recursive coefficients \(z_{\ell m}(\kappa)\).} By definition \(z_m(\kappa)=1\). For \(\ell=2\) the recursion \eqref{eq:zseq} gives
    \[
    z_{2m}(\kappa)=\frac{\kappa a_m}{2(2-\kappa a_{2m})},
    \]
    and at \(\kappa=\kappa_m\) this simplifies to
    \[
    z_{2m}(\kappa_m)=\frac{a_m}{2(a_m-a_{2m})}
    =\frac{I_m(\beta)}{2\bigl(I_m(\beta)-I_{2m}(\beta)\bigr)}.
    \]
    \item {\it Explicit two-term approximation. }Keeping only the dominant \(\ell=1\) and \(\ell=2\) terms in the exponent and using the leading-order amplitudes above, one obtains the approximation
\[
p_\kappa^{\pm}(\theta)\approx \frac{1}{Z}\exp\Big(
\underbrace{\kappa a_m s^{\pm}(\kappa)\cos(m\theta)}_{\text{primary mode}}
+\underbrace{\kappa a_{2m}\,z_{2m}(\kappa_m)\,(s^{\pm}(\kappa))^2\cos(2m\theta)}_{\text{second harmonic}}
\Big),
\]
where \(Z\) normalizes the density. The omitted terms (higher harmonics, the remainders \(r_{\ell m}\), and error on replacing $z_{2m}(\kappa)$ by $z_{2m}(\kappa_m)$) contribute \(O(|\kappa a_m-2|^{3/2})\) or smaller corrections in the exponent, so the leading structure is a primary cosine wave of frequency \(m\) with a smaller second-harmonic contribution of frequency \(2m\) whose magnitude is quadratic in the primary amplitude.
\end{enumerate}
\medskip

\paragraph{\it Small-\(\beta\) explicit approximation.}
For small \(\beta>0\) use the leading small-\(\beta\) approximation
\[
I_\ell(\beta)\approx\frac{(\beta/2)^\ell}{\ell!}\quad(\beta\downarrow0),
\qquad
a_\ell=\frac{2I_\ell(\beta)}{\beta}\approx\frac{\beta^{\ell-1}}{2^{\ell-1}\,\ell!}.
\]
Set
\[
\kappa_m=\frac{2}{a_m},\qquad \Delta\coloneqq \kappa a_m-2 \quad(\text{so } \Delta = a_m(\kappa-\kappa_m)).
\]

For \(|\Delta|\ll1\) and \(a_{2m}\ll a_m\) we may approximate
\[
s^{\pm}(\kappa)=\pm\sqrt{\Delta}+O(|\Delta|^{3/2}),\qquad
z_{2m}(\kappa_m)=\frac{a_m}{2(a_m-a_{2m})}= \frac12 + O\!\Big(\frac{a_{2m}}{a_m}\Big).
\]
Using \(s^{\pm}(\kappa)\approx\pm\sqrt{\Delta}\) and \(\kappa a_m\approx 2\) (to leading order) we have
\[
p_\kappa^{\pm}(\theta)\approx\frac{1}{Z}\exp\Big(\pm 2\sqrt{\Delta}\,\cos(m\theta)
+\tfrac12\,\kappa a_{2m}\,\Delta\,\cos(2m\theta)\Big).
\]
Equivalently, using \(\Delta=a_m(\kappa-\kappa_m)\), and substituting the small-\(\beta\) leading form of \(a_m\),
\[
a_m\approx\frac{\beta^{m-1}}{2^{\,m-1}m!},\qquad
a_{2m}\approx\frac{\beta^{2m-1}}{2^{\,2m-1}(2m)!},
\]
gives the most explicit small-\(\beta\) form
\[
p_\kappa^{\pm}(\theta)\approx\frac{1}{Z}\exp\Bigg(\pm 2\sqrt{\frac{\beta^{m-1}}{2^{\,m-1}m!}\,(\kappa-\kappa_m)}\cos(m\theta)
+O\!\big(\beta^{2m-1}(\kappa-\kappa_m)\big)\Bigg).
\]
Here the \(O(\cdot)\) term denotes the second-harmonic amplitude (and higher-harmonic contributions) which are parametrically small in \(\beta\).
\medskip

\paragraph{\it Large-\(\beta\) explicit approximation.} Note that for fixed $m$ and $\beta \to \infty$, we have the estimate from~\cref{aellbetalargebeta} for $a_m$. Hence, we have

\[
\begin{aligned}
a_m - a_{2m}
&= C(\beta)\!\left[\frac{3m^2}{2\beta} + O\!\left(\frac{1}{\beta^{2}}\right)\right], \\[6pt]
a_m - 2a_{2m}
&= C(\beta)\!\left[-1 + \frac{28m^2 - 1}{8\beta} + O\!\left(\frac{1}{\beta^{2}}\right)\right],
\end{aligned}
\qquad
\text{where}\quad C(\beta)=\frac{2e^{\beta}}{\beta^{3/2}\sqrt{2\pi}}.
\]
So $a_m-a_{2m}>0$ while $a_m-2a_{2m}<0$ for large $\beta$. Therefore, the ratio
\[
\frac{a_m-2a_{2m}}{a_m-a_{2m}}<0,
\]
and the bifurcation is \emph{subcritical}, i.e., non-trivial solutions appear for $\kappa<\kappa_m$.

\medskip
Let $\delta\coloneqq 2-\kappa a_m$ (small and positive on the subcritical branch). Substituting the above asymptotics in the general amplitude formula,
gives the explicit large-$\beta$ scaling
\[
s^{\pm}(\kappa)=\pm\sqrt{\frac{3m^2}{2\beta}\,\delta}
+O\!\big(\delta\beta^{-1},\delta^{3/2}\big).
\]
Similarly, for the recursive coefficient at the bifurcation point,
\[
z_{2m}(\kappa_m)=\frac{a_m}{2(a_m-a_{2m})}
\sim \frac{\beta}{3m^2}\big(1+O(\beta^{-1})\big).
\]
Keeping the dominant $\ell=m$ (primary) and $\ell=2m$ (second-harmonic) terms in the exponent and using $\kappa a_m=2-\delta$, we obtain the explicit two-term approximation for the density near the bifurcation as
\[
p_\kappa^{\pm}(\theta)
\approx
\frac{1}{Z}\exp\Bigg(
\pm 2\sqrt{\frac{3m^2}{2\beta}\,\delta}\,\cos(m\theta)
+\delta\,\cos(2m\theta)
\Bigg)
+\text{(smaller order corrections)}.
\]
Here the omitted terms are $O(\delta^{3/2})$ or smaller in the exponent and correspond to higher harmonics and remainder terms.

 \bibliographystyle{plainnat}
\bibliography{citations}

\end{document}